\crefname{hypothesis}{Hypothesis}{Hypotheses}
\title{Efficient Error and Variance Estimation \\ for Randomized Matrix Computations\thanks{This is a preprint of \emph{Efficient error and variance estimation for randomized matrix computations} (\url{https://doi.org/10.1137/23M1558537}), which appeared in the SIAM Journal on Scientific Computing on February 8, 2024. An earlier preprint version of this paper, with less material, was titled \emph{Jackknife variability estimation For randomized matrix computations}.
\funding{This material is based upon work supported by the U.S. Department of Energy, Office of Science, Office of Advanced Scientific Computing Research, Department of Energy Computational Science Graduate Fellowship under Award Number DE-SC0021110.  JAT was supported in part by ONR Awards N00014-17-1-2146 and N00014-18-1-2363 and NSF FRG Award 1952777.}}}
\author{Ethan N. Epperly\thanks{Department of Computing and Mathematical Sciences, California Institute of Technology, Pasadena, CA 91125 USA 
    (\email{eepperly@caltech.edu}, \email{jtropp@cms.caltech.edu}).}
\and Joel A. Tropp\footnotemark[2]
}
\newcommand{\real}{\mathbb{R}}
\newcommand{\complex}{\mathbb{C}}
\newcommand{\field}{\mathbb{K}}
\DeclareMathOperator{\tr}{tr}
\DeclareMathOperator{\diag}{diag}
\newcommand{\mat}[1]{\boldsymbol{#1}}
\renewcommand{\vec}[1]{\boldsymbol{#1}}
\newcommand{\lowrank}[1]{\mleft\llbracket #1 \mright\rrbracket}
\newcommand{\norm}[1]{\mleft\| #1 \mright\|}
\newcommand{\expmat}[1]{\begin{bmatrix} #1 \end{bmatrix}}
\newcommand{\twobytwo}[4]{\expmat{#1 & #2 \\ #3 & #4}}
\newcommand{\twobyone}[2]{\expmat{#1 \\ #2}}
\newcommand{\onebytwo}[2]{\expmat{#1 & #2}}
\DeclareMathOperator{\Var}{Var}
\DeclareMathOperator{\expect}{\mathbb{E}}
\DeclarePairedDelimiterX\condexphelper[2]{[}{]}{#1 \,\delimsize\vert\, #2}
\newcommand{\order}{\mathcal{O}}
\newcommand{\set}[1]{\mathsf{#1}}
\renewcommand{\hat}[1]{\widehat{#1}}
\newcommand{\matabssq}[1]{\mleft| #1 \mright|^2} 
\newcommand{\QR}{\textsf{QR}\xspace}
\newcommand{\uinorm}[1]{{\mleft\vert\kern-0.25ex\mleft\vert\kern-0.25ex\mleft\vert #1 
        \mright\vert\kern-0.25ex\mright\vert\kern-0.25ex\mright\vert}}
\DeclareMathOperator{\Err}{Err}
\DeclareMathOperator{\Std}{Std}
\DeclareMathOperator{\Jack}{Jack}
\newcommand{\schatten}[1]{{\left\vert\kern-0.25ex\left\vert\kern-0.25ex\left\vert #1 
        \right\vert\kern-0.25ex\right\vert\kern-0.25ex\right\vert}}
 \newcommand{\mycaption}[2]{\caption[#1]{\textbf{#1.}\ {#2}}}
\newcommand{\Id}{\mathbf{I}}
\newcommand{\evec}{\mathbf{e}}
\newcommand{\actionbox}[1]{\begin{center} \vspace{0.5pc}
\fbox{ \begin{minipage}{0.9\textwidth}
\begin{center}#1\end{center}
\end{minipage}
} \vspace{0.5pc}
\end{center}}
\definecolor{mygreen}{RGB}{28,172,0} 
\definecolor{mylilas}{RGB}{170,55,241}
\crefname{lstlisting}{Program}{Programs}
\Crefname{lstlisting}{Program}{Programs}
\begin{document}

\maketitle

\begin{abstract}
  Randomized matrix algorithms have become workhorse tools in scientific computing and machine learning.
  To use these algorithms safely in applications, they should be coupled with posterior error estimates to assess the quality of the output.
  To meet this need, this paper proposes two diagnostics: a leave-one-out error estimator for randomized low-rank approximations and a jackknife resampling method to estimate the variance of the output of a randomized matrix computation.
  Both of these diagnostics are rapid to compute for randomized low-rank approximation algorithms such as the randomized SVD and randomized Nystr\"om approximation, and they provide useful information that can be used to assess the quality of the computed output and guide algorithmic parameter choices.
\end{abstract}

\begin{keywords}
  jackknife resampling, low-rank approximation, error estimation, randomized algorithms
\end{keywords}

\begin{AMS}
  62F40, 65F55, 68W20
\end{AMS}

\section{Introduction}

In recent years, randomness has become an essential tool in the design of matrix algorithms \cite{DM16,HMT11,MT20,MDM+23,Woo14}, with randomized algorithms proving especially effective for matrix low-rank approximation.
To use these algorithms safely in practice, they should be supported by \emph{posterior error estimates} and other quality metrics that inform the user about the accuracy of the computational output.

This paper presents two diagnostic tools for randomized matrix computations:
\begin{itemize}
\item First, we provide a \emph{leave-one-out posterior estimate} for the error $\norm{\mat{A} - \mat{X}}_{\rm F}$ for a low-rank approximation $\mat{X}$ to a matrix $\mat{A}$ produced by randomized algorithms such as the randomized SVD or randomized Nystr\"om aproximation. 
\item Second, we present a jackknife method for estimating the \emph{variance} $\Var(\mat{X}) \coloneqq \expect \norm{\mat{X} - \expect \mat{X}}_{\rm F}^2$ of the matrix output $\mat{X}$ of a randomized algorithm.
The variance is a useful diagnostic: If the computation is sensitive to the randomness used by the algorithm, the computed output should be treated with suspicion.
\end{itemize}
By using novel downdating formulas (see \cref{eq:nys_update,eq:rsvd_update} below), we can rapidly compute both of these estimators for widely used low-rank approximation methods such as the randomized SVD and randomized Nystr\"om approximation.
Our diagnostics are also \emph{sample-efficient} in the sense that they require no additional matrix--vector products or other information beyond that used in the original algorithm.
The speed and efficiency of these diagnostics make them compelling additions to workflows involving randomized matrix computation.

\subsection{Leave-one-out error estimation} \label{sec:loo_intro}

We begin by motivating our first diagnostic, a leave-one-out estimator for the error of a low-rank matrix approximation.
For concreteness, we introduce this estimate in the context of Nystr\"om approximation of positive-semidefinite (psd) matrices; see \cref{sec:loo} for a more general setting 

Suppose we want to approximate a psd matrix $\mat{A} \in \real^{d\times d}$, which we can only access by the matrix--vector product operation $\vec{\omega} \mapsto \mat{A}\vec{\omega}$.
Using the matrix--vector product operation, we can compute the matrix--matrix product $\mat{A}\mat{\Phi}$ with a (random) matrix $\mat{\Phi} \in \real^{d\times s}$ with $s$ columns and form the Nystr\"om approximation \cite{GM13,TYUC17b,WS00}
\begin{equation} \label{eq:nystrom}
    \mat{A}\langle \mat{\Phi}\rangle \coloneqq \mat{A}\mat{\Phi} \mleft(\mat{\Phi}^*\mat{A}\mat{\Phi}\mright)^\dagger (\mat{A}\mat{\Phi})^*.
\end{equation}
The result is a psd, rank-$s$ approximation $\mat{A}\langle \mat{\Phi}\rangle$ to the matrix $\mat{A}$.
We will generate $\mat{\Phi}$ by applying $q\ge 0$ steps of subspace iteration to a random test matrix $\mat{\Omega}$
\begin{equation} \label{eq:subspace_iteration}
    \mat{\Phi} = \mat{A}^q \mat{\Omega},
\end{equation}
where $\mat{\Omega}$ is populated with statistically independent standard Gaussian entries.
The quality of the Nystr\"om approximation improves with a higher approximation rank $s$ and number of subspace iteration steps $q$.
Using the forthcoming \cref{alg:nystrom}, we can compute $\mat{A}\langle \mat{\Phi}\rangle$ in the form of a compact eigenvalue decomposition:
\begin{equation} \label{eq:nystrom_eigendecomposition}
    \mat{A}\langle \mat{\Phi}\rangle = \mat{V} \mat{\Lambda}\mat{ V}^*,
\end{equation}
where $\mat{V} \in \real^{d\times s}$ has orthonormal columns and $\mat{\Lambda} \in \real_+^{s\times s}$ is diagonal.
The cost of this procedure is $\order(qs)$ matrix--vector products with $\mat{A}$ and $\order(ds^2)$ additional operations.

To use the Nystr\"om approximation with confidence in applications and to guide the choice of parameters $s$ and $q$, we need to understand the accuracy of the approximation $\mat{A} \approx \mat{A}\langle \mat{\Phi}\rangle$.
This motivates our question:
\actionbox{What is the most efficient way to estimate the error $\norm{\mat{A} - \mat{A}\langle \mat{\Phi}\rangle}_{\rm F}$?}
Inspired by this question, this article proposes the \emph{leave-one-out error estimator}, which provides an estimate of $\norm{\mat{A} - \mat{A}\langle \mat{\Phi}\rangle}_{\rm F}$ using only the information already collected from $\mat{A}$ to form the Nystr\"om approximation.

The leave-one-out estimator is built by recomputing the Nystr\"om approximation using subsamples of the columns of the matrix $\mat{\Omega}$.
We can regard the Nystr\"om approximation as a function of the test matrix $\mat{\Omega}$:
\begin{equation*}
    \mat{\Omega} \mapsto \mat{X} = \mat{X}(\mat{\Omega}) \coloneqq \mat{A}\langle \mat{A}^q \mat{\Omega}\rangle.
\end{equation*}
Let $\mat{\Omega}^{(j)}$ denote $\mat{\Omega}$ without its $j$th column.
Introduce \emph{replicates} $\mat{X}^{(1)},\ldots,\mat{X}^{(s)}$ by recomputing $\mat{X}$ with each column of $\mat{\Omega}$ left out in turn:
\begin{equation*}
    \mat{X}^{(j)} = \mat{X} \big(\mat{\Omega}^{(j)}\big) \quad \text{for $j=1,2,\ldots,s$}.
\end{equation*}
Letting $\vec{\omega}_j$ denote the $j$th column of $\mat{\Omega}$ and $\norm{\cdot}$ denote the Euclidean norm, we define the \emph{leave-one-out error estimator}
\begin{equation*}
    \hat{\Err}^2(\mat{X},\mat{A}) \coloneqq \frac{1}{s} \sum_{i=1}^s \big\|\big(\mat{A} - \mat{X}^{(j)}\big)\vec{\omega}_j\big\|^2.
\end{equation*}
As we show in \cref{thm:loo}, this error estimator is an \emph{unbiased estimator for the mean-square error of the rank-$(s-1)$ Nystr\"om approximation}:
\begin{equation*}
    \expect \hat{\Err}^2(\mat{X},\mat{A}) = \expect \big\|\mat{A} - \mat{X}(\mat{\Omega}^{(s)})\big\|_{\rm F}^2.
\end{equation*}
Once the Nystr\"om approximation has been computed, $\hat{\Err}(\mat{X},\mat{A})$ is cheap to form, requiring at most $\order(ds^2)$ additional operations (and only $\order(s^3)$ operations if $q = 0$).
See \cref{sec:nystrom} for details.
The error estimate $\hat{\Err}(\mat{X},\mat{A})$ requires no information about $\mat{A}$ beyond what is required to form the approximation $\mat{X}$.

A good point of comparison for the leave-one-out error estimator is provided by the Girard--Hutchinson norm estimate \cite[\S4.8]{MT20}
\begin{equation} \label{eq:gh}
    \hat{\Err}^2_{\rm GH}(\mat{X},\mat{A}) = \frac{1}{t} \sum_{i=1}^t \norm{ (\mat{A}-\mat{X}) \vec{\nu}_i }^2 \approx \norm{\mat{A}-\mat{X}}_{\rm F}^2.   
\end{equation}
Here, $\vec{\nu}_1,\ldots,\vec{\nu}_t$ are independent standard Gaussian test vectors.
This estimator requires $t$ matrix--vector products with $\mat{A}$ (beyond those used to form the approximation $\mat{X}$), and the norm estimate becomes more accurate with larger $t$. 
The Girard--Hutchinson norm estimator is equivalent to the Girard--Hutchinson trace estimator \cite[\S4.2]{MT20} applied to $(\mat{A}-\mat{X})^*(\mat{A}-\mat{X})$ and served as our inspiration for the leave-one-out error estimator.
The leave-one-out estimator improves on the Girard--Hutchinson estimator as it requires no additional matrix--vector products with $\mat{A}$ to compute.
In addition, the quality of the leave-one-out estimator automatically improves when the approximation rank $s$ increases, whereas the Girard--Hutchinson estimate only improves by using more matrix--vector products.

\begin{figure}[t]
  \centering
  \includegraphics[width=0.45\textwidth]{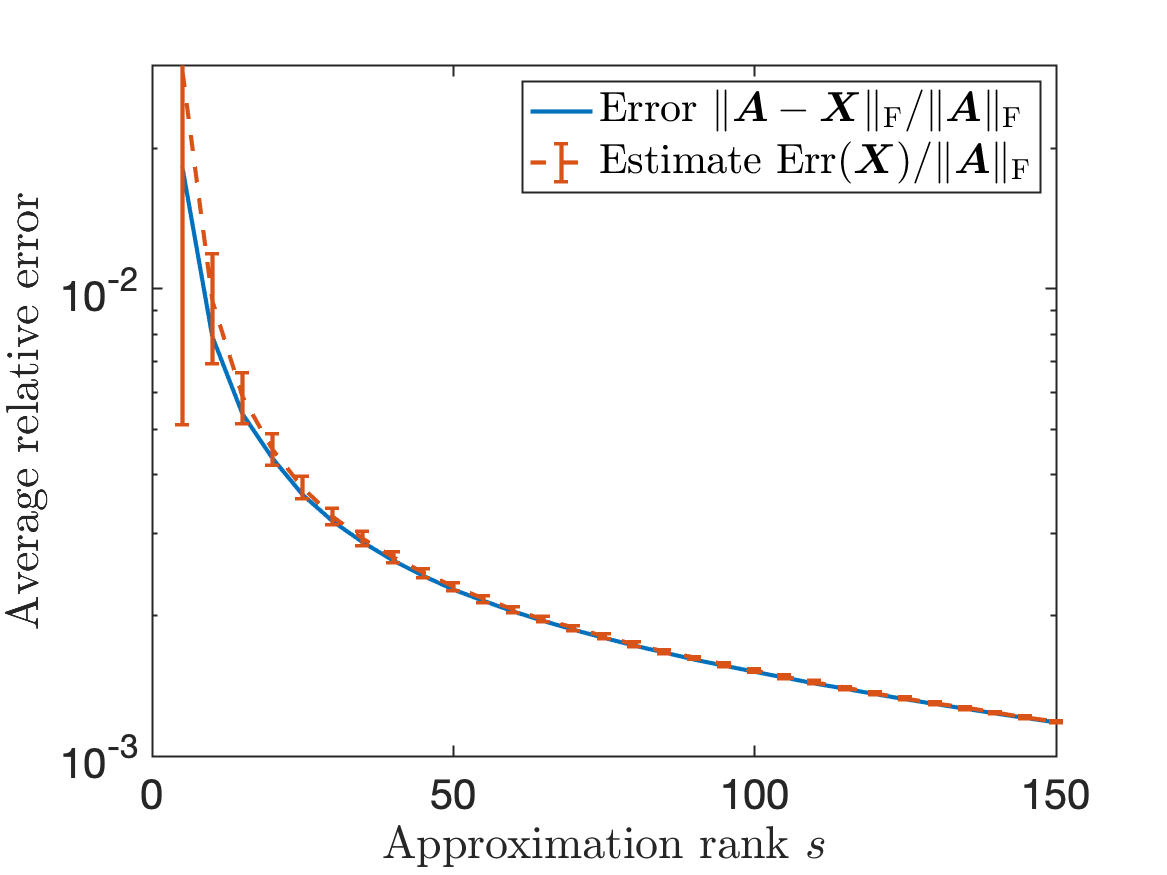}\hfill \includegraphics[width=0.45\textwidth]{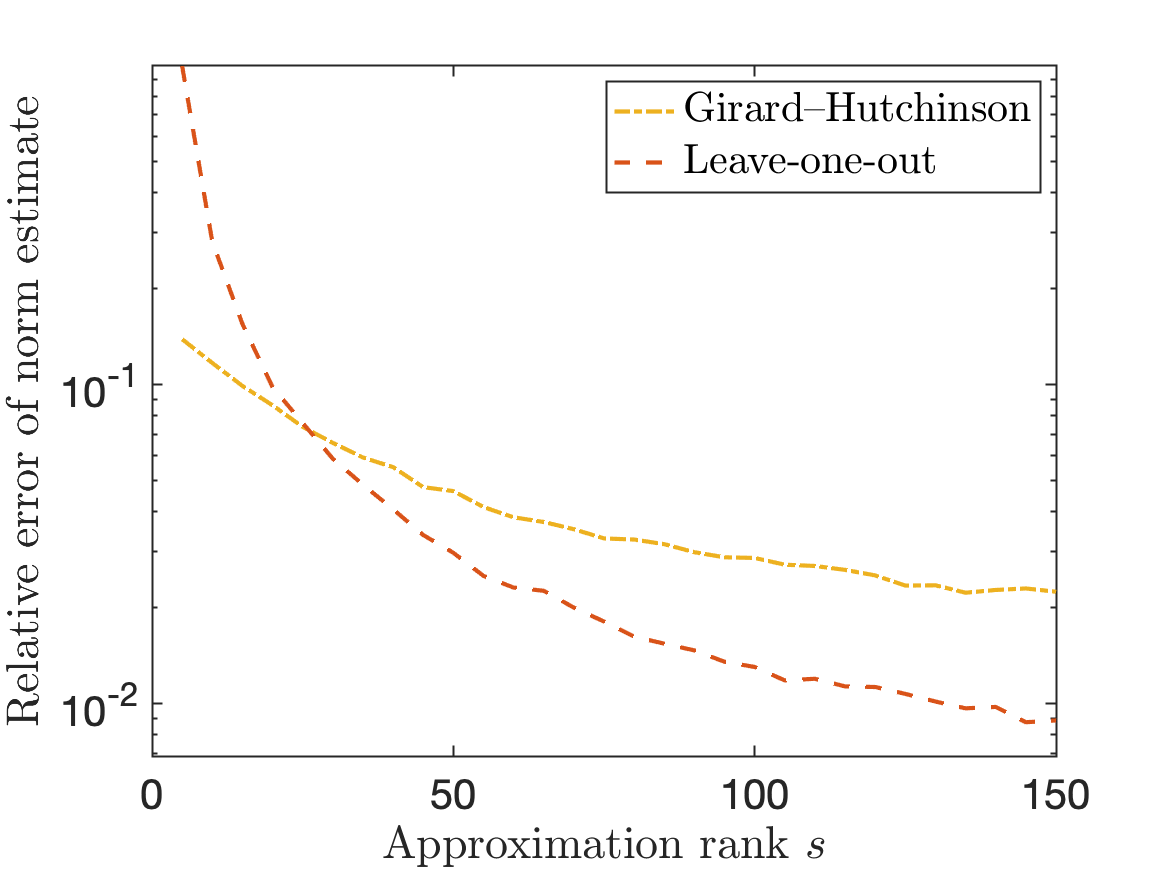}
  
  \mycaption{Leave-one-out error estimator}{Approximation error $\Err(\mat{X},\mat{A}) / \norm{\mat{A}}_{\rm F}$ and normalized error estimate $\hat{\Err}(\mat{X},\mat{A}) / \norm{\mat{A}}_{\rm F}$ (\emph{left}) and relative error for the leave-one-out and Girard--Hutchinson error estimators (\emph{right}) for Nystr\"om approximation $\mat{X}$ to a psd kernel matrix $\mat{A} \in \real^{10^4\times 10^4}$.
  Error and error estimate were computed by an average of $1000$ trials.
  Error bars show one standard deviation.}
  \label{fig:loo}
\end{figure}

\Cref{fig:loo} demonstrates the leave-one-out error estimator.
In this figure, we apply single-pass Nystr\"om approximation ($q=0$) to approximate psd kernel matrix $\mat{A} \in \real^{10^4\times 10^4}$ formed from a random subsample of $10^4$ points from the QM9 dataset \cite{RDRv14,RvBR12} using approximation ranks $5\le s \le 150$.
In the left panel, we plot the mean error
\begin{equation} \label{eq:mean_error}
    \Err(\mat{X},\mat{A}) \coloneqq \expect \norm{\mat{A} - \mat{X}}_{\rm F}
\end{equation}
and leave-one-out error estimate $\hat{\Err}(\mat{X},\mat{A})$, estimated using $1000$ trials.
We see that the estimate $\hat{\Err}(\mat{X},\mat{A})$ closely tracks the true error.
Moreover, the error estimate $\hat{\Err}(\mat{X},\mat{A})$ is fast to compute, with the error estimate taking less than 1\% of the total runtime to form.
The right panel compares the mean relative error
\begin{equation*}
    \text{mean relative error} = \expect \left[ \frac{|\norm{\mat{A}-\mat{X}}_{\rm F} - \mathrm{Est}|}{\norm{\mat{A}-\mat{X}}_{\rm F}}\right], \quad \mathrm{Est} \in \left\{ \hat{\Err}(\mat{X},\mat{A}), \hat{\Err}_{\rm GH}(\mat{X},\mat{A})\right\}
\end{equation*}
for both the leave-one-out and Girard--Hutchinson error estimators.
Following \cite[\S7.9]{TYUC19}, we use $t=10$ matrix--vector products for the Girard--Hutchinson estimator.
For $s\ge 25$, the leave-one-out estimator is more accurate than Girard--Hutchinson and, for all values of $s$, the leave-one-out estimator is cheaper to form than Girard--Hutchinson, requiring just $\order(s^3)$ operations and no additional matrix--vector products.

\subsection{Matrix jackknife motivating example: spectral clustering}
\label{sec:spectral_clustering}

Randomized low-rank approximations can also be used for \emph{spectral computations} (i.e., to approximate eigenvalues, eigenvectors, singular values, etc.).
In this case, the low-rank approximation error $\norm{\mat{A} - \mat{X}}_{\rm F}$ may only provide indirect information about the accuracy of the computation.
For situations such as this, we propose a \emph{matrix jackknife variance estimate} as a diagnostic tool.
In this section, we illustrate the value of this matrix jackknife approach in a spectral clustering application, before introducing the method in generality in \cref{sec:matrix-jackknife}.

\subsubsection{Nystr\"om-accelerated spectral clustering}
Spectral clustering \cite{Von07} is an algorithm that uses eigenvectors to assign data points, say $\vec{c}_1,\ldots,\vec{c}_d \in \real^m$, into groups.
To measure similarity between points, we employ a nonnegative, positive definite kernel function $\kappa : \real^m \times \real^m \to \real_+$.
One popular choice is the square-exponential kernel
\begin{equation} \label{eq:square_exponential}
    \kappa\big(\vec{c},\vec{c}'\big) = \exp \mleft( - \frac{\norm{\vec{c} - \vec{c}'}^2}{2\sigma^2}\mright).
\end{equation}
To cluster the data points into groups, we perform the following steps
\begin{enumerate}
    \item Form the kernel matrix $\mat{K}$ with entries $k_{ij} = \kappa(\vec{c}_i,\vec{c}_j)$.
    \item Form the diagonal matrix $\mat{D} = \diag\big( \sum_{j=1}^d k_{ij} : i=1,\ldots,d \big)$.
    \item Compute the $n_{\rm dim}$ dominant eigenvectors $\mat{U}$ of $\mat{A} \coloneqq \mat{D}^{-1/2}\mat{K}\mat{D}^{-1/2}$.
    \item Set $\mat{W} \coloneqq \mat{D}^{-1/2} \mat{U}$.
    \item Apply a general-purpose clustering algorithm, such as k-means \cite{AV07} with $n_{\rm cen}$ centers, to the rows of $\mat{W}$.
\end{enumerate}
Parameters $n_{\rm dim}$ and $n_{\rm cen}$ set the clustering space dimension and number of clusters.

If one uses direct methods for the eigenvalue problem, the cost of spectral clustering is dominated by the $\order(d^3)$ cost for the eigenvector calculation in step 3.
We can accelerate spectral clustering by using Nystr\"om approximation \cref{eq:nystrom}.
The modification is simple: Use the $n_{\rm dim}$ dominant eigenvectors of the Nystr\"om approximation, accessible from the eigendecomposition \cref{eq:nystrom_eigendecomposition}, in place of the eigenvectors of $\mat{A}$.

\subsubsection{Variance estimation for spectral clustering}

As we refine the approximation by increasing $s$, the approximate eigenvectors $\mat{\hat{U}}$ will converge to the true eigenvectors $\mat{U}$ (provided $\mat{U}$ is unique).
But how do we know when we have taken $s$ large enough?
Our guiding principle is:
\actionbox{In order to trust the answer provided by a randomized algorithm, the output should be insensitive to the randomness used by the algorithm.}
The \emph{variance} of the matrix output $\mat{X}$ of a randomized algorithm, defined as
\begin{equation} \label{eq:variance}
    \Var(\mat{X}) \coloneqq \expect \norm{\mat{X} - \expect \mat{X}}_{\rm F}^2,
\end{equation}
provides a quantitative measurement of the sensitivity of the algorithmic output to randomness used by the algorithm.
In the context of spectral clustering, we can use a variance estimate to guide our choice of the rank $s$.

To understand the sensitivity of Nystr\"om-accelerated spectral clustering to randomness in the algorithm, we need to specify a target matrix $\mat{X}$ for variance estimation.
The input to k-means clustering are the \emph{coordinates}
\begin{equation*}
    \mat{\hat{W}} \coloneqq \mat{D}^{-1/2} \mat{\hat{U}}.
\end{equation*}
To respect the invariance of k-means to scaling and rotation of the coordinates, our target for variance estimation will be
\begin{equation*}
    \mat{X} = \mat{\hat{W}} \mat{\hat{W}}^* / \big\|\mat{\hat{W}}\mat{\hat{W}}^*\big\|_{\rm F}.
\end{equation*}

\subsubsection{Jackknife variance estimation}

Jackknife variance estimation is similar to the leave-one-out error estimator in that we use replicates $\mat{X}^{(1)},\ldots,\mat{X}^{(s)}$ recomputed by successively leaving out columns of the random test matrix $\mat{\Omega}$.
As before, we view the target $\mat{X}$ as a function
\begin{equation*}
    \mat{X} = \mat{X}(\mat{\Omega})
\end{equation*}
of the test matrix $\mat{\Omega}$ defining the Nystr\"om approximation by \cref{eq:nystrom} and \cref{eq:subspace_iteration}.
Form \emph{jackknife replicates} $\mat{X}^{(j)}$ and their average $\mat{X}^{(\cdot)}$ via
\begin{equation*}
    \mat{X}^{(j)} = \mat{X}\mleft( \mat{\Omega}^{(j)} \mright) \quad \text{for $j = 1,\ldots,s$} \quad \text{and} \quad \mat{X}^{(\cdot)} \coloneqq \frac{1}{s} \sum_{j=1}^s \mat{X}^{(j)}
\end{equation*}
where $\mat{\Omega}^{(j)}$ again denotes $\mat{\Omega}$ without its $j$th column.
The jackknife estimate for $\Var(\mat{X})$ is
\begin{equation*}
    \Jack^2(\mat{X}) \coloneqq \sum_{j=1}^s \norm{\mat{X}^{(j)} - \mat{X}^{(\cdot)}}_{\rm F}^2.
\end{equation*}
Guarantees for this estimator are provided in \cref{thm:jackknife}.
\Cref{alg:spectral_clustering} and \cref{list:spectral_clustering} provide pseudocode and a MATLAB implementation of Nystr\"om-accelerated spectral clustering with the jackknife estimate $\Jack(\mat{X})$.
The cost of forming the jackknife estimate $\Jack(\mat{X})$ is $\order(s^2d)$, much faster than the $\order(qsd^2)$ total cost of Nystr\"om-accelerated spectral clustering.

\subsubsection{Numerical example} \label{sec:spectral_clustering_numerics}
\begin{figure}
    \centering

    \begin{subfigure}[b]{0.46\textwidth} \centering
    \includegraphics[width=\textwidth]{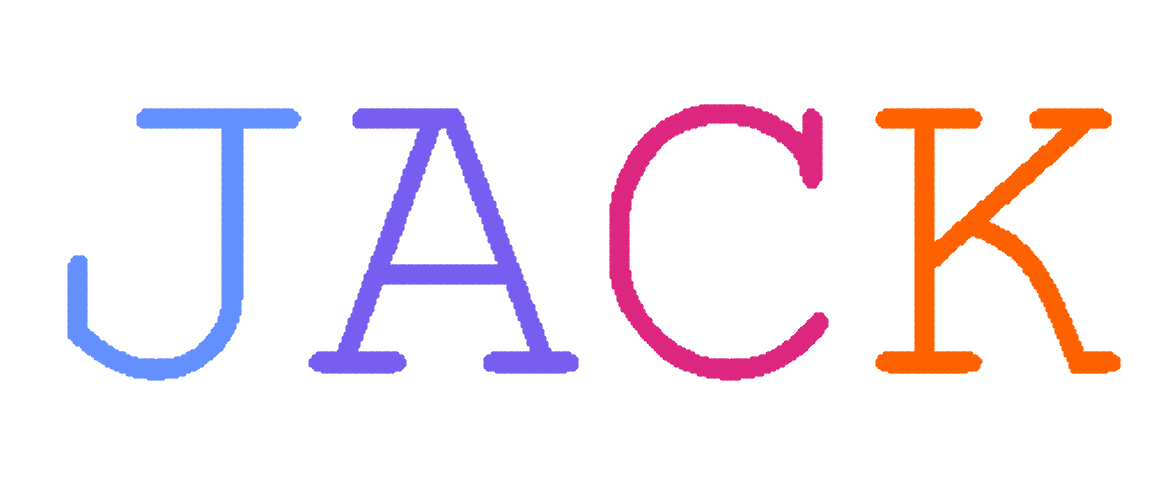}
        \caption*{Correct clustering}
      \end{subfigure}
      \begin{subfigure}[b]{0.46\textwidth} \centering
    \includegraphics[width=\textwidth]{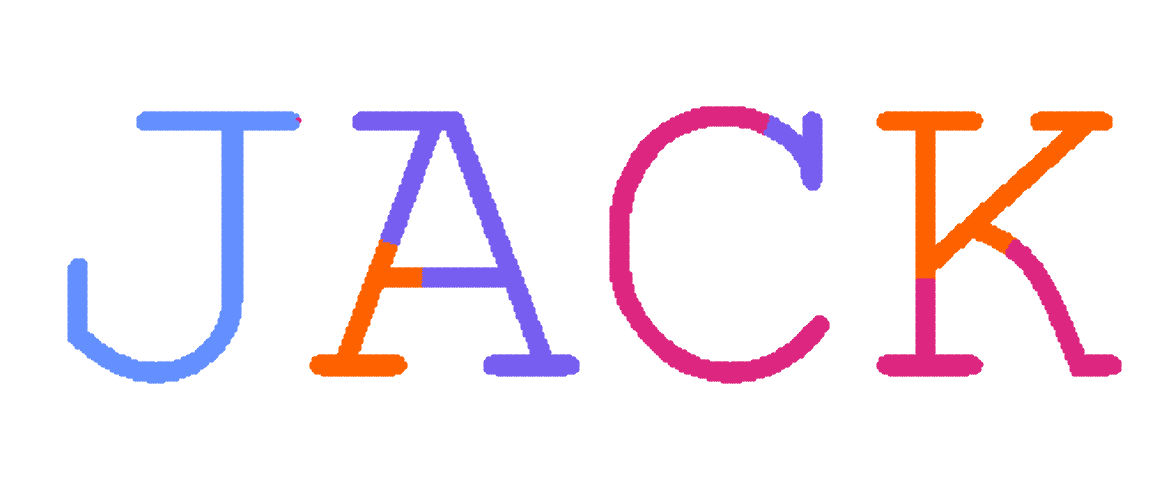}
        \caption*{Incorrect clustering}
      \end{subfigure}
    
    \includegraphics[width=0.7\textwidth]{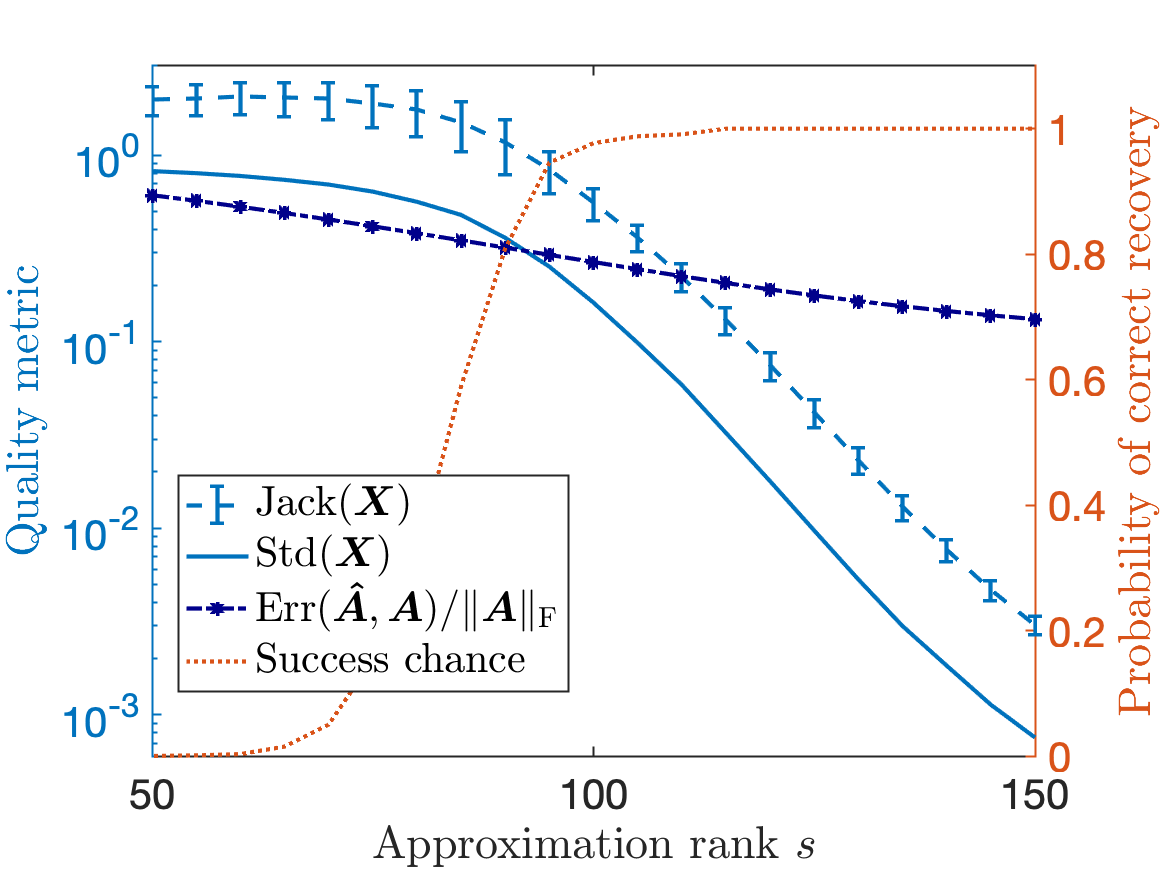}
    \mycaption{Matrix jackknife for spectral clustering}{\emph{Top}: Example of correct (\emph{left}) and incorrect (\emph{right}) clusterings. \emph{Bottom}: Standard deviation $\operatorname{Std}(\mat{X})$ and its jackknife estimate $\Jack(\mat{X})$ (left axis) and success probability of spectral clustering (right axis) versus Nystr\"om approximation rank $50\le s\le 150$.}
    \label{fig:spectral_clustering}
\end{figure}
To demonstrate the effectiveness of jackknife variance estimation for spectral clustering, we use the following experimental setup: Consider the task of separating the four letters \texttt{JACK} from a point cloud $\vec{c}_1,\ldots,\vec{c}_{9426} \in \real^2$.
For spectral clustering, use the square-exponential kernel \cref{eq:square_exponential} and parameters $n_{\rm dim} = n_{\rm cen} = 4$. For Nystr\"om, use $q = 3$ steps of subspace iteration and test a range of approximation ranks $50 \le s \le 150$.
For each value of $s$, we run $1000$ trials and report the natural Monte Carlo estimate of the standard deviation
\begin{equation} \label{eq:std}
    \operatorname{Std}(\mat{X}) = \mleft( \expect \norm{\mat{X} - \expect \mat{X}}_{\rm F}^2 \mright)^{1/2},
\end{equation}
the mean and standard deviation for the \emph{standard deviation} estimate $\Jack(\mat{X})$, the relative error $\Err(\mat{\hat{A}},\mat{A})/\norm{\mat{A}}_{\rm F}$ for the Nystr\"om approximation $\mat{\hat{A}}$, and the empirical success probability for spectral clustering.

\Cref{fig:spectral_clustering} shows the results.
The estimate $\Jack(\mat{X})$ overestimates the standard deviation by a modest amount (a factor of five at most).
The jackknife is not quantitatively sharp, but it is a reliable indicator of whether the variance is high or low.

The jackknife estimate $\Jack(\mat{X})$ can be used to determine whether the Nystr\"om-based approximations to the eigenvectors are accurate enough for clustering task.
At rank $s = 50$, clustering is never performed correctly, and the jackknife estimate $\Jack(\mat{X}) \approx 2$ is high.
As the approximation rank $s$ is increased, clustering begins to succeed with higher and higher probability and the jackknife estimate $\Jack(\mat{X})$ decreases, indicating reduced variability.

The success of the jackknife estimate $\Jack(\mat{X})$ should be contrasted with the failure of the Nystr\"om error \smash{$\Err(\mat{\hat{A}},\mat{A})$} as a useful diagnostic for spectral clustering correctness.
The Nystr\"om error \smash{$\Err(\mat{\hat{A}},\mat{A})$} decreases at a steady rate as $s$ is increased---unlike the jackknife estimate, the plot of \smash{$\Err(\mat{\hat{A}},\mat{A})$} does not show an inflection point indicating the transition between clustering failure and success.

On the basis of these experiments, we propose two possible uses for the jackknife estimate in a spectral clustering workflow:
\begin{itemize}
    \item \textbf{User warning.} If the jackknife variance estimate is high, provide a warning to the user.
    This gives the user to determine and fix the problem for themselves by changing the Nystr\"om parameters $s,q$ or the spectral clustering parameters $n_{\rm cen},n_{\rm dim}$.
    \item \textbf{Adaptive stopping.} Choose parameters $s$ or $q$ for the Nystr\"om approximation adaptively at runtime by increasing these parameters until $\Jack(\mat{X})$ falls below a tolerance (e.g., $0.1$).
\end{itemize}
These uses demonstrate the potential for jackknife variance estimation to be helpful in incorporating randomized low-rank approximation into general-purpose software.

\subsubsection{Benefits of matrix jackknife variance estimation}

The spectral clustering example demonstrates a number of virtues for matrix jackknife variance estimation:
\begin{itemize}
    \item \textbf{Flexibility.} Matrix jackknife variance estimation can be applied to a very general target function $\mat{X}(\mat{\Omega})$ depending on a random test matrix $\mat{\Omega}$.
    This allows the jackknife to be applied to a wide array of randomized low-rank approximation algorithms and allows the user to design the variance estimation target for their application.
    \item \textbf{Efficiency.} By using optimized algorithms (\cref{sec:computations,sec:efficient_spectral_clustering}), computation of the jackknife variance estimate can be very fast.
    For instance, for the clustering problem, the $\order(s^2d)$ cost of the jackknife estimate is dwarfed by the $\order(sd^2)$ cost of the clustering procedure.
    For $s = 150$, computing the jackknife variance estimate amounts to less than 3\% of the total runtime.
\end{itemize}

\subsection{Outline}
\label{sec:outline}

Having introduced our two diagnostics, we present each in more generality; \cref{sec:loo} discusses leave-one-out error estimation and \cref{sec:matrix-jackknife} discusses the matrix jackknife.
\Cref{sec:computations} discusses efficient computations for both of these diagnostics applied to the randomized SVD and Nystr\"om approximation.
\Cref{sec:numer-exper} contains numerical experiments, and \cref{sec:high-schatt-norms} extends the the matrix jackknife to higher Schatten norms ($p > 2$).

\subsection{Notation}
\label{sec:notation}

We work over the field $\field=\real$ or $\field = \complex$.
Notations ${}^*$, ${}^\dagger$, and $\norm{\cdot}_{\rm F}$ denote the conjugate transpose, Moore--Penrose pseudoinverse, and Frobenius norm.
The expectation of a random variable $X$ is denoted $\expect X$, and its variance is defined as $\Var(X) \coloneqq \expect |X - \expect X|^2$.
We adopt the convention that nonlinear operators bind before the expectation; for example, $\expect X^2 \coloneqq \expect (X^2)$.
The variance of a random matrix is given by \cref{eq:variance}.

\section{Leave-one-out error estimation for low-rank approximation} \label{sec:loo}

In this section, we present the leave-one-out error estimation technique introduced in \cref{sec:loo_intro} for more general randomized matrix approximations.

\subsection{The estimator}

Let $\mat{A} \in \field^{d_1\times d_2}$ be a matrix we seek to approximate by a randomized approximation $\mat{X}$.
We are interested in a general class of algorithms which collect information about the matrix $\mat{A}$ by matrix--vector products
\begin{equation*}
    \mat{A}\vec{\omega}_1,\ldots,\mat{A}\vec{\omega}_s
\end{equation*}
with random test vectors $\vec{\omega}_1,\ldots,\vec{\omega}_s$.
Many algorithms are defined for an arbitrary number of test vectors $s$, allowing us to construct error estimates by leaving out a test vector, resulting in an approximation $\mat{X}_{s-1}$ defined using only $s-1$ vectors. 
This motivates the following abstract setup:
\begin{itemize}
\item Let $\vec{\omega}_1,\ldots,\vec{\omega}_s$ be independent and identically distributed (iid) random vectors in $\field^{d_2}$ that are \emph{isotropic}: $\expect \big[ \vec{\omega}_j^{\vphantom{*}} \vec{\omega}_j^* \big] = \Id$.
\item Let $\mat{X}$ denote one of two matrix estimators defined for $s$ or $s-1$ inputs:
  \begin{equation*}
    \mat{X} : (\field^{d_2})^s \to \field^{d_1\times d_2} \quad \textrm{or} \quad \mat{X} : (\field^{d_2})^{s-1} \to \field^{d_1\times d_2}.
  \end{equation*}
\item Define estimates $\mat{X}_s \coloneqq \mat{X}(\vec{\omega}_1,\ldots,\vec{\omega}_s)$ and $\mat{X}_{s-1} \coloneqq \mat{X}(\vec{\omega}_1,\ldots,\vec{\omega}_{s-1})$.
\end{itemize}
Examples of estimators which fit this description include randomized Nystr\"om approximation, the randomized SVD \cite{HMT11}, and randomized block Krylov iteration \cite{MM15,TW23}.

We seek to approximate the mean-square error
\begin{equation*}
    \operatorname{MSE}(\mat{X}_{s-1},\mat{A}) = \expect \norm{\mat{A} - \mat{X}_{s-1}}_{\rm F}^2
\end{equation*}
of the $(s-1)$-sample approximation $\mat{X}_{s-1}$ as a proxy for the mean-square error $\operatorname{MSE}(\mat{X}_s)$ of the $s$-sample approximation $\mat{X}_s$.
Define the \emph{leave-one-out mean-square error estimate}
\begin{equation} \label{eq:loo_general}
    \hat{\Err}^2(\mat{X}_{s-1},\mat{A}) \coloneqq \frac{1}{s} \sum_{j=1}^s\norm{ \big(\mat{A} - \mat{X}^{(j)}\big)\vec{\omega}_j}^2,
\end{equation}
where the replicates $\mat{X}^{(j)}$ are 
\begin{equation*}
    \mat{X}^{(j)} = \mat{X}(\vec{\omega}_1,\ldots,\vec{\omega}_{j-1},\vec{\omega}_{j+1},\ldots,\vec{\omega}_s) \quad \text{for $j = 1,\ldots,s$}.
\end{equation*}
This estimator is an unbiased estimator for $\operatorname{MSE}(\mat{X}_{s-1},\mat{A})$.
\begin{theorem}[Leave-one-out error estimator] \label{thm:loo}
    With the prevailing notation,
    \begin{equation*}
        \operatorname{MSE}(\mat{X}_{s-1},\mat{A}) = \expect\, \hat{\Err}^2(\mat{X}_{s-1},\mat{A}).
    \end{equation*}
\end{theorem}

\begin{proof}
    For each $j$, $\mat{X}^{(j)}$ and $\omega_j$ are independent. 
    Consequently, letting $\expect_j$ denote an expectation over the randomness in $\vec{\omega}_j$ alone, we compute
    \begin{align*}
        \expect_j \big\|(\mat{A}-\mat{X}^{(j)})\vec{\omega}_j\big\|^2 &= \expect_j \big[ \vec{\omega}_j^*(\mat{A}-\mat{X}^{(j)})^*(\mat{A}-\mat{X}^{(j)})\vec{\omega}_j^{\vphantom{*}}\big] \\
        &= \expect_j \tr \big[(\mat{A}-\mat{X}^{(j)})^*(\mat{A}-\mat{X}^{(j)})\vec{\omega}_j^{\vphantom{*}}\vec{\omega}_j^*\big] \\
        &= \tr \big((\mat{A}-\mat{X}^{(j)})^*(\mat{A}-\mat{X}^{(j)})\expect\left[\vec{\omega}_j^{\vphantom{*}}\vec{\omega}_j^*\right]\big) \\
        &= \tr \big((\mat{A}-\mat{X}^{(j)})^*(\mat{A}-\mat{X}^{(j)})\big) = \big\|\mat{A} - \mat{X}^{(j)}\big\|_{\rm F}^2.
    \end{align*}
    The first line is an identity for the Frobenius norm, the second line is the cyclic property of the trace, the third line is the independence of $\mat{X}^{(j)}$ and $\vec{\omega}_j$, and the fourth line is the isotropic property of $\vec{\omega}_j$.
    Thus, by the tower property of conditional expectation, we conclude
    \begin{align*}
        \expect\, \hat{\Err}^2(\mat{X}_{s-1},\mat{A}) &= \frac{1}{s} \sum_{j=1}^s\expect\, \big[\expect_j \, \big\| \big(\mat{A} - \mat{X}^{(j)}\big)\vec{\omega}_j\big\|^2\big] \\&= \frac{1}{s} \sum_{j=1}^s\expect\, \big\| \big(\mat{A} - \mat{X}^{(j)}\big)\vec{\omega}_j\big\|^2_{\rm F} = \operatorname{MSE}(\mat{X}_{s-1},\mat{A}).
    \end{align*}
    This confirms the theorem.
\end{proof}

Numerical evidence for the quality of this error estimate is provided in \cref{fig:loo,fig:rsvd_loo}.
With efficient algorithms (\cref{sec:computations}), the leave-one-out error estimator is rapid to compute for the randomized SVD and Nystr\"om approximation.

\subsection{Alternatives}

Two alternatives to the leave-one-out error estimator are worth mentioning.
First, in many situations, it may be possible and computationally cheap to simply compute the error $\norm{\mat{A} - \mat{X}}_{\rm F}$ directly.
For instance, if $\mat{X}$ is the approximation produced by the randomized SVD \cite{HMT11}, then
\begin{equation*}
    \norm{\mat{A} - \mat{X}}_{\rm F}^2 = \norm{\mat{A}}_{\rm F}^2 - \norm{\mat{X}}_{\rm F}^2,
\end{equation*}
which facilitates fast computation of the error if $\mat{A}$ is a dense or sparse matrix stored in memory.
The leave-one-out error estimator should only be used if direct computation of the error is not possible or too expensive.
This is the case, for example, in the black-box setting where one has access to $\mat{A}$ only through the matrix--vector product $\vec{\omega} \mapsto \mat{A}\vec{\omega}$ and adjoint--vector product $\vec{\omega} \mapsto \mat{A}^*\vec{\omega}$ operations.

A second alternative is the Girard--Hutchinson norm estimator \cref{eq:gh}, discussed in \cref{sec:loo_intro}.
The leave-one-out estimator improves on the Girard--Hutchinson estimator as the leave-one-out estimate does not require any additional matrix--vector products and automatically improves in quality as the number of vectors $s$ is increased.

\section{Matrix jackknife variance estimation}
\label{sec:matrix-jackknife}

This section outlines our proposal for \emph{matrix jackknife variance estimation} for more general randomized matrix algorithms.
\Cref{sec:efron-stein-steele} reviews jackknife variance estimation for scalar quantities.
We introduce and analyze the matrix jackknife variance estimator in \cref{sec:matr-jackkn-estim}.
\Cref{sec:uses-jackkn-vari,sec:matr-jackkn-vers,sec:bootstrap} discuss potential applications of the matrix jackknife and complementary topics.

\subsection{Tukey's jackknife variance estimator and the Efron--Stein--Steele inequality}
\label{sec:efron-stein-steele}

To motivate our matrix jackknife proposal, we begin by presenting the jackknife variance estimator \cite{Tuk58} for scalar estimators due to Tukey in \cref{sec:tukeys-jackkn-vari}.
In \cref{sec:efron-stein-steele-1}, we discuss the Efron--Stein--Steele inequality used in its analysis.

\subsubsection{Tukey's jackknife variance estimator}
\label{sec:tukeys-jackkn-vari}

Consider the problem of estimating the variance of a statistical estimator computed from $s$ random samples.
We assume it makes sense to evaluate the estimator with fewer than $s$ samples, as is the case for many classical estimators like the sample mean and variance.
This motivates the following setup:
\begin{itemize}
\item Let $\omega_1,\ldots,\omega_s$ be independent and identically distributed random elements taking values in a measurable space $\Omega$.
\item Let $f$ denote either one of two estimators, defined for $s$ or $s-1$ arguments:
  \begin{equation*}
    f : \Omega^s \to \field \quad \textrm{or} \quad f : \Omega^{s-1} \to \field.
  \end{equation*}
\item Assume that $f$ is invariant to a reordering of its inputs:
  \begin{equation*}
    f(\omega_1,\ldots,\omega_s) = f(\omega_{\pi(1)},\ldots,\omega_{\pi(s)}) \quad \textrm{for any permutation } \pi.
  \end{equation*}
\item Define estimates $E_{s-1} \coloneqq f(\omega_1,\ldots,\omega_{s-1})$ and $E_s \coloneqq f(\omega_1,\ldots,\omega_s)$.
\end{itemize}
We think of $E_s$ as a statistic computed from a collection of samples $\omega_1,\ldots,\omega_s$.
We can also evaluate the statistic with only $s-1$ samples, resulting in $E_{s-1}$.

Tukey's jackknife variance estimator provides an estimate for $\Var(E_{s-1})$, which serves as a proxy for the variance of the $s$-sample estimator $E_s$.
Define jackknife replicates and mean
\begin{equation*} \label{eq:scalar_replicate}
  E^{(j)} \coloneqq f(\omega_1,\ldots,\omega_{j-1},\omega_{j+1},\ldots,\omega_s) \quad \mbox{for each } j = 1,2,\ldots,s; \quad E^{(\cdot)} \coloneqq \frac{1}{s} \sum_{j=1}^s E^{(j)}.
\end{equation*}
The quantities $E^{(1)},\ldots,E^{(s)}$ represent the statistic recomputed with each of the samples $\omega_1,\ldots,\omega_s$ left out in turn.

Tukey's estimator for $\Var(E_{s-1})$ is given by
\begin{equation} \label{eq:tukey}
  \widehat{\Var}(E_{s-1}) \coloneqq \sum_{j=1}^{s} \mleft| E^{(j)} - E^{(\cdot)} \mright|^2.
\end{equation}
Observe that Tukey's estimator \cref{eq:tukey} is the sample variance of the jackknife replicates $E^{(j)}$ up to a normalizing constant.
The form of Tukey's estimator suggests that the distribution of the jackknife replicates somehow approximates the distribution of the estimator.
This intuition can be formalized using the Efron--Stein--Steele inequality.

\subsubsection{Efron--Stein--Steele inequality}
\label{sec:efron-stein-steele-1}

To analyze Tukey's estimator, we rely on an inequality of Efron and Stein \cite{ES81}, which was improved by Steele \cite{Ste86}:
\begin{fact}[Efron--Stein--Steele inequality] \label{fact:efron_stein_steele}
  Let $\omega_1,\ldots,\omega_s \in \Omega$ be independent elements in a measurable space $\Omega$, and let $f : \Omega^s \to \field$ be measurable.
  Let $(\omega_j' : j = 1,\dots, s)$ be an independent copy of $(\omega_j : j = 1, \dots, s)$.
  Then
  \begin{equation} \label{eq:efron_stein_steele}
    \Var\big(f\big(\omega_1^{\vphantom{'}},\ldots,\omega_s^{\vphantom{'}}\big)\big) \le \frac{1}{2} \sum_{i=1}^s \expect \Big| f\big(\omega_1^{\vphantom{'}},\ldots,\omega_s^{\vphantom{'}}\big) - f\big( \omega_1^{\vphantom{'}},\ldots,\omega_{j-1}^{\vphantom{'}},\omega_j',\omega_{j+1}^{\vphantom{'}},\ldots,\omega_s^{\vphantom{'}} \big) \Big|^2. 
  \end{equation}
\end{fact}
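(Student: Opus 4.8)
The plan is to prove the inequality by the classical Doob martingale decomposition, in the form due to Steele. Write $Z \coloneqq f(\omega_1,\ldots,\omega_s)$ (we may assume $\expect|Z|^2 < \infty$, as otherwise there is nothing to prove), let $\mathcal{F}_0 \subseteq \mathcal{F}_1 \subseteq \cdots \subseteq \mathcal{F}_s$ be the filtration with $\mathcal{F}_j \coloneqq \sigma(\omega_1,\ldots,\omega_j)$ and $\mathcal{F}_0$ trivial, and set $Z_j \coloneqq \condexp{Z}{\mathcal{F}_j}$, so that $Z_0 = \expect Z$ and $Z_s = Z$. Then the martingale differences $\Delta_j \coloneqq Z_j - Z_{j-1}$ telescope to $Z - \expect Z = \sum_{j=1}^s \Delta_j$, and they are orthogonal in $L^2$: for $i < j$, conditioning on $\mathcal{F}_{j-1}$ (which makes $\Delta_i$ measurable) and using $\condexp{\Delta_j}{\mathcal{F}_{j-1}} = 0$ gives $\expect[\Delta_i \overline{\Delta_j}] = 0$. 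Hence
\[
  \Var(Z) = \expect\Bigl| \sum\nolimits_{j=1}^s \Delta_j \Bigr|^2 = \sum_{j=1}^s \expect |\Delta_j|^2 .
\]
(For $\field = \complex$ one may instead split into real and imaginary parts.) So it suffices to bound each $\expect|\Delta_j|^2$ by half of the corresponding summand on the right-hand side of \eqref{eq:efron_stein_steele}.

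For the second step, for each $j$ let $\expect^{(j)}$ denote the operation of integrating out the single variable $\omega_j$, i.e., conditional expectation given $(\omega_i : i \ne j)$, and write $\Var^{(j)}(Z) \coloneqq \expect^{(j)}\bigl|Z - \expect^{(j)}Z\bigr|^2$ for the conditional variance in the $j$th coordinate. I would first establish the identity
\[
  \Delta_j = \condexp{\bigl( Z - \expect^{(j)}Z \bigr)}{\mathcal{F}_j} .
\]
Indeed $\condexp{Z}{\mathcal{F}_j} = Z_j$, while $\expect^{(j)}Z$ is a function of $(\omega_i : i\ne j)$ alone, so by independence of the $\omega_i$ it is unaffected by further conditioning on $\omega_j$, and the tower property yields $\condexp{\expect^{(j)}Z}{\mathcal{F}_j} = \condexp{\expect^{(j)}Z}{\mathcal{F}_{j-1}} = \condexp{Z}{\mathcal{F}_{j-1}} = Z_{j-1}$. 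Applying conditional Jensen to the convex map $t \mapsto |t|^2$ then gives $|\Delta_j|^2 \le \condexp{\bigl|Z - \expect^{(j)}Z\bigr|^2}{\mathcal{F}_j}$, and taking expectations leaves $\expect|\Delta_j|^2 \le \expect\bigl|Z - \expect^{(j)}Z\bigr|^2 = \expect\,\Var^{(j)}(Z)$.

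The last step is the elementary observation that if $U$ and $V$ are i.i.d.\ and square-integrable then $\expect|U - V|^2 = 2\Var(U)$; applied conditionally on $(\omega_i : i \ne j)$ with $U = f(\omega_1,\ldots,\omega_s)$ and $V = f(\omega_1,\ldots,\omega_{j-1},\omega_j',\omega_{j+1},\ldots,\omega_s)$ (which are i.i.d.\ given the other coordinates, since $\omega_j'$ is an independent copy of $\omega_j$), this gives $\expect\,\Var^{(j)}(Z) = \tfrac12\,\expect\bigl|f(\omega_1,\ldots,\omega_s) - f(\omega_1,\ldots,\omega_{j-1},\omega_j',\omega_{j+1},\ldots,\omega_s)\bigr|^2$. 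Summing over $j$ and chaining the three displays proves \eqref{eq:efron_stein_steele}. The only place that demands care is the bookkeeping in the second step — in particular the reduction $\condexp{\expect^{(j)}Z}{\mathcal{F}_j} = Z_{j-1}$, which is precisely where independence of $\omega_1,\ldots,\omega_s$ enters; the martingale orthogonality and the Jensen estimate are routine.
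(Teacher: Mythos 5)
The paper states this result as a \emph{Fact} and does not prove it; it simply cites Efron--Stein \cite{ES81} and Steele \cite{Ste86} and remarks that the complex case follows by splitting into real and imaginary parts. Your argument is a correct, self-contained rendition of the standard martingale proof of the Efron--Stein--Steele inequality (essentially Steele's original argument, as presented in Boucheron--Lugosi--Massart): the Doob decomposition with orthogonal increments gives $\Var(Z)=\sum_j \expect|\Delta_j|^2$, the identity $\Delta_j = \condexp{Z-\expect^{(j)}Z}{\mathcal{F}_j}$ plus conditional Jensen gives $\expect|\Delta_j|^2 \le \expect\,\Var^{(j)}(Z)$, and the symmetrization identity $\expect|U-V|^2 = 2\Var(U)$ for conditionally i.i.d.\ $U,V$ converts the sum of conditional variances into the stated right-hand side. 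All three steps check out, including the one place you flag as delicate: $\condexp{\expect^{(j)}Z}{\mathcal{F}_j} = \condexp{Z}{\mathcal{F}_{j-1}}$ does indeed require independence of the coordinates, and your handling of the complex case via convexity of $|\cdot|^2$ on $\complex$ is a legitimate alternative to the real/imaginary splitting the paper suggests. (Note in passing that the paper's display has a harmless index mismatch, summing over $i$ while the summand is written in $j$; your version resolves it consistently.)
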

The complex-valued version of the inequality presented here follows from the more standard version for real values by treating the real and imaginary parts separately.

In the setting of Tukey's estimator \cref{eq:tukey}, the samples $\omega_1,\ldots,\omega_{s-1}$ are identically distributed and the function $f$ depends symmetrically on its arguments.
Therefore, the last sample $\omega_s$ can be used to fill the role of each $\omega_j'$ in the right-hand side of \cref{eq:efron_stein_steele}.
As a consequence, the Efron--Stein--Steele inequality shows that
\begin{equation} \label{eq:tukey_overestimate}
\begin{aligned}
  \Var(E_{s-1}) &\le \frac{1}{2} \sum_{i=1}^{s-1} \expect\,\mleft( E^{(j)} - E^{(s)} \mright)^2 = \frac{1}{2s}\sum_{i,j=1}^s \expect\,\mleft( E^{(i)} - E^{(j)} \mright)^2 \\ &= \sum_{j=1}^s \expect\,\mleft( E^{(j)} - E^{(\cdot)} \mright)^2 = \expect\, \widehat{\Var}(E_{s-1}).
  \end{aligned}
\end{equation}
To move from the first line to the second, we expand the square, use the definition of the mean $E^{(\cdot)} = s^{-1} \sum_{j=1}^s E^{(j)}$, and regroup terms.
This computations shows that Tukey's variance estimator \cref{eq:tukey} \emph{overestimates} the true variance on average.

\subsection{The matrix jackknife estimator of variance}
\label{sec:matr-jackkn-estim}

Suppose we are interested in the variance of the output $\mat{X}\in \field^{d_1\times d_2}$ to a randomized matrix algorithm.
Similar to the scalar setting, we assume that $\mat{X}$ is a function of independent samples $\omega_1,\ldots,\omega_s$ and that it makes sense to evaluate $\mat{X}$ with fewer than $s$ samples.
The formal setup is as follows:
\begin{itemize}
\item Let $\omega_1,\ldots,\omega_s$ be independent and identically distributed random elements in a measurable space $\Omega$.
\item Let $\mat{X}$ denote one of two matrix estimators defined for $s$ or $s-1$ inputs:
  \begin{equation*}
    \mat{X} : \Omega^s \to \field^{d_1\times d_2} \quad \textrm{or} \quad \mat{X} : \Omega^{s-1} \to \field^{d_1\times d_2}.
  \end{equation*}
\item Assume $\mat{X}$ is invariant to reordering of its inputs: $$\mat{X}(\omega_1,\ldots,\omega_s) = \mat{X}(\omega_{\pi(1)},\ldots,\omega_{\pi(s)})\quad \text{for any permutation $\pi$.}$$
\item Define estimates $\mat{X}_s \coloneqq \mat{X}(\omega_1,\ldots,\omega_s)$ and $\mat{X}_{s-1} \coloneqq \mat{X}(\omega_1,\ldots,\omega_{s-1})$.
\end{itemize}
For a randomized low-rank approximation algorithm, the samples $\omega_1,\ldots,\omega_s$ might represent the columns of a test matrix $\mat{\Omega}$, as they did in \cref{sec:spectral_clustering}.

We are interested in estimating the variance of $\mat{X}_{s-1}$ as a proxy for the variance of $\mat{X}_s$.
We expect that adding additional samples will refine the approximation and thus reduce its variance.
Define jackknife replicates $\mat{X}^{(j)}$ and their average $\mat{X}^{(\cdot)}$
\begin{equation*} 
  \mat{X}^{(j)} = \mat{X}(\omega_1,\ldots,\omega_{j-1},\omega_{j+1},\ldots,\omega_s) \quad \mbox{for each } j = 1,\ldots,s; \quad \mat{X}^{(\cdot)} \coloneqq \frac{1}{s} \sum_{j=1}^s \mat{X}^{(j)}.
\end{equation*}
We propose the matrix jackknife estimate
\begin{equation} \label{eq:jackknife_approximation}
  \Jack^2\big(\mat{X}_{s-1}\big) \coloneqq \sum_{j=1}^{s} \norm{\mat{X}^{(j)} - \mat{X}^{(\cdot)}}_{\rm F}^2
\end{equation}
for the variance $\Var(\mat{X}_{s-1})$.
The estimator $\Jack(\mat{X}_{s-1})$ can be efficiently computed for several randomized low-rank approximation, as we shall demonstrate in \cref{sec:computations}.
Similar to the classic jackknife variance estimator, we can use the Efron--Stein--Steele inequality to show that this variance estimate is an overestimate on average.

\begin{theorem} [Matrix jackknife] \label{thm:jackknife}
  With the prevailing notation,
  \begin{equation} \label{eq:jackknife_bound}
    \Var(\mat{X}_{s-1}) \le \expect \, \Jack^2\big(\mat{X}_{s-1}\big).
  \end{equation}
\end{theorem}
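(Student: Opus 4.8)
The plan is to mimic exactly the scalar argument in \eqref{eq:tukey_overestimate}, but applied coordinate-by-coordinate to the matrix entries, and then re-sum. First I would observe that $\Var(\mat{X}_{s-1}) = \expect \norm{\mat{X}_{s-1} - \expect \mat{X}_{s-1}}_{\rm F}^2 = \sum_{k,\ell} \Var\big((\mat{X}_{s-1})_{k\ell}\big)$, using that the Frobenius norm squared is the sum of squared entries and that the entrywise expectation commutes with the sum. For each fixed entry $(k,\ell)$, the scalar map $\omega \mapsto (\mat{X}_{s-1})_{k\ell}$ is a measurable, permutation-symmetric function of $s-1$ i.i.d.\ inputs, so Tukey's analysis applies verbatim: by the Efron--Stein--Steele inequality (Fact~\ref{fact:efron_stein_steele}), using $\omega_s$ in the role of each resampled coordinate $\omega_j'$ exactly as in \eqref{eq:tukey_overestimate}, we get
\begin{equation*}
  \Var\big((\mat{X}_{s-1})_{k\ell}\big) \le \sum_{j=1}^s \expect\, \big| (\mat{X}^{(j)})_{k\ell} - (\mat{X}^{(\cdot)})_{k\ell} \big|^2 .
\end{equation*}

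Next I would sum this inequality over all entries $(k,\ell)$. On the left side the sum reassembles $\Var(\mat{X}_{s-1})$. On the right side I interchange the (finite) sum over $(k,\ell)$ with the sum over $j$ and with the expectation, and recognize $\sum_{k,\ell} | (\mat{X}^{(j)})_{k\ell} - (\mat{X}^{(\cdot)})_{k\ell}|^2 = \norm{\mat{X}^{(j)} - \mat{X}^{(\cdot)}}_{\rm F}^2$. This yields
\begin{equation*}
  \Var(\mat{X}_{s-1}) \le \sum_{j=1}^s \expect \norm{\mat{X}^{(j)} - \mat{X}^{(\cdot)}}_{\rm F}^2 = \expect \sum_{j=1}^s \norm{\mat{X}^{(j)} - \mat{X}^{(\cdot)}}_{\rm F}^2 = \expect\, \Jack^2\big(\mat{X}_{s-1}\big),
\end{equation*}
which is \eqref{eq:jackknife_bound}.

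The only genuinely substantive point — and the step I expect to need the most care — is the passage \eqref{eq:tukey_overestimate} itself at the level of a single entry: namely, that the replicates $(\mat{X}^{(j)})_{k\ell}$ are identically distributed (because the $\omega_i$ are i.i.d.\ and $\mat{X}$ is permutation-invariant), so that the bound $\tfrac12 \sum_{j=1}^{s-1}\expect\big((\mat{X}^{(j)})_{k\ell} - (\mat{X}^{(s)})_{k\ell}\big)^2$ coming from Efron--Stein--Steele can be symmetrized into $\tfrac{1}{2s}\sum_{i,j=1}^s \expect\big((\mat{X}^{(i)})_{k\ell} - (\mat{X}^{(j)})_{k\ell}\big)^2$ and then rewritten, via the identity $\tfrac{1}{2s}\sum_{i,j}(a_i-a_j)^2 = \sum_j (a_j - \bar a)^2$, as $\sum_{j=1}^s \expect\big((\mat{X}^{(j)})_{k\ell} - (\mat{X}^{(\cdot)})_{k\ell}\big)^2$. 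For the complex case one splits into real and imaginary parts, exactly as the excerpt notes after Fact~\ref{fact:efron_stein_steele}; every entrywise bound above then holds with $|\cdot|^2$, and the sum over entries still produces the Frobenius norm. Everything else is bookkeeping: measurability of each entry map is inherited from measurability of $\mat{X}$, and all interchanges of sums and expectations are over finitely many indices so require no integrability hypotheses beyond $\mat{X}_{s-1} \in L^2$, which is implicit in $\Var(\mat{X}_{s-1})$ being finite.
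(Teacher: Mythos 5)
Your proposal is correct and matches the paper's own proof: the paper likewise applies the scalar bound \eqref{eq:tukey_overestimate} to each matrix entry $(\mat{X}_{s-1})_{mn}$ and then sums over all entries to recover the Frobenius norm. The additional detail you supply about the symmetrization identity is exactly the content already established in \eqref{eq:tukey_overestimate}, so nothing further is needed.
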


\begin{proof}
  Fix a pair of indices $1\le m\le d_1$ and $1\le n\le d_2$.
  Applying \cref{eq:tukey_overestimate} to the $(m,n)$-matrix entry $\mleft(\mat{X}_{s-1}\mright)_{mn}$, we observe 
  \begin{equation*}
    \expect \mleft| \mleft(\mat{X}_{s-1}\mright)_{mn} - \expect \mleft(\mat{X}_{s-1}\mright)_{mn} \mright|^2 \le \expect \sum_{j=1}^s \mleft| \mat{X}^{(j)}_{mn} - \mat{X}^{(\cdot)}_{mn} \mright|^2
  \end{equation*}
  Summing this equation over all $1\le m\le d_1$ and $1\le n\le d_2$ yields the stated result.
\end{proof}

When the jackknife variance estimate is small, \cref{thm:jackknife} shows the variance of the approximation is also small.
Empirical evidence (\cref{sec:numer-exper}) suggests that $\Var(\mat{X}_{s-1})$ and $\expect \, \Jack^2(\mat{X}_{s-1})$ tend to be within an order of magnitude for the algorithms we considered.

It is natural to ask whether we can develop and theoretically jackknife estimates of the bias of randomized matrix algorithms to complement our variance estimate, perhaps using the natural analog of Quenouille's scalar jackknife bias estimate \cite{Que49}.
This is an interesting question for future work.
As we have demonstrated in \cref{sec:spectral_clustering} and will further demonstrate in \cref{sec:numer-exper}, our variance estimate already provides useful and actionable information for randomized matrix algorithms.

\subsection{Uses for matrix jackknife variance estimator}
\label{sec:uses-jackkn-vari}

Variance is a useful diagnostic for randomized matrix approximations.
When the variance is large, it suggests that one of two issues has arisen:
\begin{enumerate}
    \item More samples are needed to refine the approximation.
    \item The underlying approximation problem is badly conditioned
\end{enumerate}
In either case, the jackknife variance estimate can provide evidence that the computed output should not be trusted.

We anticipate the primary use case for matrix jackknife variance estimation will be for computations using eigenvectors or singular vectors computed by randomized low-rank approximation algorithms such as the randomized SVD and Nystr\"om approximation.
Spectral computations with randomized algorithms currently lack effective posterior estimates, making jackknife variance estimation one of the only available tools to assess the quality of the outputs of such computations at runtime.
In the context of spectral computations, matrix jackknife variance estimation can be used to adaptively determine the approximation rank $s$ needed to achieve outputs of sufficiently high quality.
This was demonstrated in \cref{sec:spectral_clustering}, where we used jackknife variance estimation to determine how large to pick $s$ in a spectral clustering context.
As we will later demonstrate in \cref{sec:numer-exper}, we can also use the jackknife variance estimation to detect ill-disposed eigenvectors and to get \emph{coordinate-wise} variance estimates for singular vector computations.

\subsection{Matrix jackknife versus scalar jackknife}
\label{sec:matr-jackkn-vers}

Sometimes, we are only interested in scalar outputs of a randomized matrix computation, such as eigenvalues or singular values, entries of eigenvectors or singular vectors, or the trace.
In these cases, it might be more efficient to directly apply Tukey's variance estimator \cref{eq:tukey} to assess the variance of these scalars.
The matrix jackknife may still be a useful tool because it gives \emph{simultaneous} variance estimates over many scalar quantities.
As examples, the matrix jackknife estimates the maximum variance over all linear functionals:
\begin{equation*}
  \expect \max_{\norm{\mat{C}}_{\rm F} \le 1} \mleft| \tr(\mat{C}\mat{X}_{s-1}) - \tr(\mat{C} \, \expect\mat{X}_{s-1}) \mright|^2 = \expect \norm{\mat{X}_{s-1} - \expect \mat{X}_{s-1}}_{\rm F}^2 \le \expect \Jack^2(\mat{X}_{s-1}),
\end{equation*}
The matrix jackknife gives the following variance estimate for the singular values:
\begin{equation*}
  \expect \sum_{j=1}^{\min(d_1,d_2)} \mleft| \sigma_j(\mat{X}_{s-1}) - \sigma_j(\expect \mat{X}_{s-1}) \mright|^2 = \expect \norm{\mat{X}_{s-1} - \expect \mat{X}_{s-1}}_{\rm F}^2 \le \expect \Jack^2(\mat{X}_{s-1}).
\end{equation*}
Thus, the matrix jackknife is appealing even when one is interested in multiple scalar-valued functions of the matrix approximation $\mat{X}_{s-1}$.
In addition, efficient algorithms for matrix jackknife variance  estimation, as detailed in \cref{sec:computations}, are useful for scalar jackknife variance estimation of functionals of a randomized matrix approximation.

\subsection{Related work: bootstrap for randomized matrix algorithms}
\label{sec:bootstrap}

Bootstrap resampling \cite[\S5]{Efr82}, a close relative of the jackknife, has seen several applications to matrix computations.
An early use case was to provide confidence intervals for eigenvalues and eigenvectors of sample covariance matrices \cite[\S7.2]{TE93}.

A more recent line of work, led by Lopes and collaborators, applies bootstrap resampling to randomized matrix algorithms \cite{LEM20,LEM23,LWM19,YEL23}. 
The work closest to ours \cite{LEM20} uses the bootstrap to provide asymptotically sharp estimates of the error quantiles with regards to general error metrics for each singular value and singular vector computed by a \emph{Monte Carlo-type} sketched SVD algorithm.  In this special case, the bootstrap provides more fine-grained information than 
the matrix jackknife.  Unfortunately, this sketched SVD is a poor computational method because its error decays at the Monte Carlo rate.

The main benefit of our matrix jackknife approach is that it is effective for very general matrix algorithms, such as the randomized SVD and Nystr\"om approximation.
These algorithms are used far more widely in practice because they achieve spectral accuracy, producing errors comparable with the best low-rank approximation~\cite{HMT11}.
As we demonstrate in \cref{sec:bootstrap-failure}, a straightforward application of the bootstrap variance of Efron \cite[\S5.1]{Efr82} to the randomized SVD can produce standard deviation estimates which are incorrect by over four orders of magnitude.

\section{Case studies in low-rank approximation}
\label{sec:computations}

In this section, we develop \emph{efficient} computational procedures to compute the jackknife variance estimate and leave-one-out error estimate for two randomized low-rank approximations, randomized Nystr\"om approximation (\cref{sec:nystrom}) and the randomized SVD (\cref{sec:rsvd}). 

\subsection{Nystr\"om approximation} \label{sec:nystrom}

Given a test matrix $\mat{\Omega} \in \field^{d\times s}$, consider again the Nystr\"om approximation \cref{eq:nystrom} with  $q$ steps of subspace iteration \cref{eq:subspace_iteration} applied to a psd matrix $\mat{A} \in \field^{d\times d}$:
\begin{equation*}
    \mat{X} = \mat{X}(\mat{\Omega}) \coloneqq \mat{A}\langle \mat{\Phi}\rangle = \mat{A}\mat{\Phi} (\mat{\Phi}^*\mat{A}\mat{\Phi})^\dagger (\mat{A}\mat{\Phi})^* \quad \text{where} \quad \mat{\Phi} = \mat{A}^q \mat{\Phi}.
\end{equation*}
The Nystr\"om approximation $\mat{X}$ is the best psd approximation to $\mat{A}$ spanned by $\mat{A}\mat{\Phi}$ with a psd residual.
We focus on the case where $\mat{\Omega}$ is populated with iid, isotropic columns, such as when $\mat{\Omega}$ is a standard Gaussian matrix.

We work with the Nystr\"om approximation in eigenvalue decomposition form:
\begin{equation*}
    \mat{X} = \mat{V}\mat{\Lambda}\mat{V}^*
\end{equation*}
where $\mat{V} \in\field^{d\times s}$ has orthonormal columns and $\mat{\Lambda} \in \real^{s\times s}_+$ is diagonal.
To facilitate efficient computations of our diagnostics, we can compute the Nystr\"om approximation in eigendecomposition form as follows:
\begin{enumerate}
\item Draw a test matrix $\mat{\Omega} \in\field^{n\times s}$ with iid isotropic columns.
\item Apply subspace iteration $\mat{\Phi} = \mat{A}^q \mat{\Omega}$.
\item Compute the product $\mat{Y} = \mat{A}\mat{\Omega}$.
\item Orthonormalize $\mat{Q} = \operatorname{orth}(\mat{Y})$ using economy \QR factorization $\mat{Y} = \mat{Q}\mat{R}$.
\item Compute $\mat{H} = \mat{\Omega}^*\mat{Y}$ and Cholesky factorize $\mat{H} = \mat{C}^*\mat{C}$.
\item Obtain a singular value decomposition $\mat{R}\mat{C}^{-1} = \mat{U}\mat{\Sigma}\mat{Z}^*$.
\item Set $\mat{\Lambda} \coloneqq \mat{\Sigma}^2$ and $\mat{V} \coloneqq \mat{Q}\mat{U}$.
\end{enumerate}
\cref{alg:nystrom} provides an implementation with $q = 0$ with tricks to improve its numerical stability adapted from \cite{LLS+17,TYUC17b}.
For $q > 0$, it may be necessary to introduce additional orthogonalization steps for reasons of numerical stability \cite[Alg.~5.2]{Saa92}.

Treating the Nystr\"om approximation $\mat{X}$ as a symmetric function of the iid, isotropic columns $\vec{\omega}_1,\ldots,\vec{\omega}_s$ of the test matrix $\mat{\Omega}$,
\begin{equation*}
    \mat{X} = \mat{X}(\vec{\omega}_1,\ldots,\vec{\omega}_s),
\end{equation*}
we can apply both the leave-one-out error estimator and jackknife variance estimation to $\mat{X}$.
Define replicates
\begin{equation*}
    \mat{X}^{(j)} = \mat{X}(\vec{\omega}_1,\ldots,\vec{\omega}_{j-1},\vec{\omega}_{j+1},\ldots,\vec{\omega}_s).
\end{equation*}
To compute the replicates efficiently, we use the update formula \cite[eq.~(2.4)]{ETW24}
\begin{subequations} \label{eq:nys_update}
\begin{equation} 
    \mat{X}^{(j)} = \mat{V} \mleft( \mat{\Lambda} - \vec{t}_j^{\vphantom{*}} \vec{t}_j^*\mright) \mat{V}^*
\end{equation}
where $\vec{t}_1,\ldots,\vec{t}_s$ are the columns of the matrix
\begin{equation} \label{eq:nys_update_t}
    \mat{T} = \mat{U}^*\mat{R}\mat{H}^{-1} \cdot \diag \mleft( (\mat{H}^{-1})_{ii}^{-1/2} : i=1,2,\ldots,s \mright).
\end{equation}
\end{subequations}
A derivation of this formula is provided in \cref{app:nystrom_update}.

The update formula facilitates efficient algorithms for the leave-one-out error estimator and jackknife variance estimates for the Nystr\"om approximation and derived quantities like spectral projectors and truncation of $\mat{X}$ to rank $r<s$.
To not belabor the point by presenting all possible variations, \cref{alg:nystrom} presents an implementation of Nystr\"om approximation without subspace iteration (i.e., $q = 0$) with the leave-one-out error estimate $\hat{\Err}(\mat{X},\mat{A})$.
The computation of the error estimate is a simple addition to the algorithm, requiring just a single line and taking only $\order(s^3)$ operations, independent of the size $d$ of the input matrix.
Further variants are discussed in \cref{sec:spectral_nystrom} and a MATLAB implementation is provided in \cref{list:nystrom}.

\begin{algorithm}[t]
  \caption{Nystr\"om approximation ($q=0$) with leave-one-out error estimate} \label{alg:nystrom}
  \textbf{Input:} $\mat{A}\in\field^{d\times d}$ to be approximated and approximation rank $s$ \\
  \textbf{Output:} Factors $\mat{V}\in\field^{d\times s}$ and $\mat{\Lambda}\in\real^{s\times s}$ defining a rank-$s$ approximation $\mat{X} = \mat{V}\mat{\Lambda}\mat{V}^*$ and leave-one-out error estimate $\hat{\Err}$
  \begin{algorithmic}[1]
    \State $\mat{\Omega} \leftarrow \mathtt{randn}(d,s)$
    \State $\mat{Y} \leftarrow \mat{A}\mat{\Omega}$
    \State $\nu \leftarrow \epsilon_{\rm mach} \norm{\mat{Y}}$ and  $\mat{Y} \leftarrow \mat{Y} + \nu \mat{\Omega}$ \Comment{Shift for numerical stability}
    \State $(\mat{Q},\mat{R}) \leftarrow \mathtt{qr}(\mat{Y}, \texttt{'econ'})$ \Comment{Economy \QR Factorization}
    \State $\mat{H} \leftarrow \mat{\Omega}^*\mat{Y}$
    \State $\mat{C} \leftarrow \mathtt{chol}((\mat{H}+\mat{H}^*)/2)$ \Comment{Upper triangular Cholesky decomposition $\mat{H} = \mat{C}^*\mat{C}$}
    \State $(\mat{U},\mat{\Sigma},\sim) \leftarrow \mathtt{svd}(\mat{R}\mat{C}^{-1})$ \label{line:svd_nystrom} \Comment{Triangular solve}
    \State $\mat{\Lambda} \leftarrow \max(\mat{\Sigma}^2-\nu\mathbf{I},0)$ \Comment{Entrywise maximum, shift back for numerical stability} \label{line:lambda}
    \State $\mat{V} \leftarrow \mat{Q}\mat{U}$
    \State $\hat{\Err} \leftarrow \norm{(\mat{R}\mat{C}^{-1})\mat{C}^{-*} \cdot \diag\{(\mat{H}^{-1}_{ii})^{-1} : i = 1,\ldots,s)\}}_{\rm F} / \sqrt{s}$
  \end{algorithmic}
\end{algorithm}

\subsection{Randomized SVD} \label{sec:rsvd}

The randomized SVD computes a rank-$s$ approximation to $\mat{A} \in \field^{d_1\times d_2}$ formed as an economy SVD $\mat{X} = \mat{U}\mat{\Sigma}\mat{V}^*$, where $\mat{U}\in\field^{d_1\times s}$ and $\mat{V} \in \field^{d_2\times s}$ have orthonormal columns and $\mat{\Sigma} \in \real^{s\times s}_+$ is diagonal.
With $q\ge 0$ steps of subspace iteration, the algorithm proceeds as follows:
\begin{enumerate}
\item Draw a \emph{test matrix} $\mat{\Omega}\in\complex^{d_2\times s}$ with iid isotropic columns.
\item Compute the product $\mat{Y} = (\mat{A}\mat{A}^*)^q \mat{A} \mat{\Omega}$. 
\item Orthonormalize $\mat{Q} = \operatorname{orth}(\mat{Y})$ using economy \QR factorization $\mat{Y} = \mat{Q}\mat{R}$.
\item Form the matrix $\mat{C} = \mat{Q}^*\mat{A}$.
\item Compute an economy SVD $\mat{C} = \mat{W}\mat{\Sigma}\mat{V}^*$.
\item Set $\mat{U} = \mat{Q}\mat{W}$.
\end{enumerate}
The output $\mat{X}$ is a symmetric function of the iid, isotropic columns $\vec{\omega}_1,\ldots,\vec{\omega}_s$ of $\mat{\Omega}$,
\begin{equation*}
    \mat{X} = \mat{X}(\vec{\omega}_1,\ldots,\vec{\omega}_s),
\end{equation*}
making it a candidate for leave-one-out error estimation and jackknife variance estimation.

To compute the replicates efficiently, we will use the following update formula for the $\mat{Q}$ matrix in the randomized SVD \cite[eq.~(2.1)]{ETW24}:
\begin{equation} \label{eq:rsvd_Q_update}
    \mat{Q}^{(j)}\mleft(\mat{Q}^{(j)}\mright)^* = \mat{Q}\mleft(\Id - \vec{t}_j^{\vphantom{*}}\vec{t}_j^*\mright)\mat{Q}^*
\end{equation}
where $\mat{Q}^{(j)}$ denotes the $\mat{Q}$ matrix produced by the randomized SVD algorithm executed without the $j$th column of $\mat{\Omega}$ and the vectors $\vec{t}_1,\ldots,\vec{t}_s$ are the normalized columns of $(\mat{R}^*)^{-1}$.
With this formula, the replicates are easily computed
\begin{equation} \label{eq:rsvd_update}
    \mat{X}^{(j)} = \mat{Q}^{(j)}\mleft(\mat{Q}^{(j)}\mright)^*\mat{A} = \mat{Q}\mleft(\Id - \vec{t}_j^{\vphantom{*}}\vec{t}_j^*\mright)\mat{Q}^*\mat{A} = \mat{U}(\Id - \mat{W}^*\vec{t}_j^{\vphantom{*}}\vec{t}_j^*\mat{W})\mat{\Sigma}\mat{V}^*.
\end{equation}

The update formula enables efficient algorithms for the leave-one-out error estimator and jackknife variance estimator for the approximation $\mat{X}$ and derived quantities like projectors onto singular subspaces and truncation of $\mat{X}$ to rank $r<s$.
As one example, \cref{alg:rsvd} gives an implementation the randomized SVD with no subspace iteration ($q = 0$) with the leave-one-out error estimator $\hat{\Err}(\mat{X},\mat{A})$.
The leave-one-out error estimator requires just two lines and runs in $\order(s^3)$ operations; see \cref{sec:loo-derivation} for a derivation of these two lines.
Further variants are discussed in \cref{sec:rsvd_extra} and a MATLAB implementation is provided in \cref{list:randsvd}.

\begin{algorithm}[t]
  \caption{Randomized SVD ($q = 0$) with jackknife variance estimate} \label{alg:rsvd}
  \textbf{Input:} $\mat{A}\in\field^{d_1\times d_2}$ to be approximated and approximation rank $s$ \\
  \textbf{Output:} Factors $\mat{U}\in\field^{d_1\times s}$, $\mat{\Sigma}\in\real^{s\times s}$, and $\mat{V}\in\field^{d_2\times s}$ defining a rank-$s$ approximation $\mat{X} = \mat{U}\mat{\Sigma}\mat{V}^*$, leave-one-out error estimate $\hat{\Err} = \hat{\Err}(\mat{X},\mat{A})$
  \begin{algorithmic}[1]
    \State $\mat{\Omega} \leftarrow \mathtt{randn}(d_2,s)$
    \State $\mat{Y} \leftarrow \mat{A}\mat{\Omega}$
    \State $(\mat{Q},\mat{R}) \leftarrow \mathtt{qr}(\mat{Y}, \texttt{'econ'})$ 
    \Comment{Economy \QR Factorization}
    \State $\mat{C} \leftarrow \mat{Q}^*\mat{A}$
    \State $(\mat{W}, \mat{\Sigma}, \mat{V}) \leftarrow \mathtt{svd}(
    \mat{C}, \texttt{'econ'})$
    \State $\mat{U} \leftarrow \mat{Q}\mat{W}$
    \State $\mat{G} \leftarrow (\mat{R}^*)^{-1}$
    \State $\hat{\Err} \leftarrow \mleft( s^{-1} \sum_{j=1}^s \norm{\mat{G}(:,j)}^{-2} \mright)^{1/2}$ \Comment{See \cref{sec:loo-derivation} for a derivation}
  \end{algorithmic}
\end{algorithm}

\section{Numerical experiments}
\label{sec:numer-exper}

In this section, we showcase numerical examples that demonstrate the effectiveness of the matrix jackknife variance estimate and leave-one-our error estimate for the Nystr\"om approximation and randomized SVD.
All numerical experiments work over the real numbers, $\field = \real$.

\subsection{Experimental setup}

To evaluate our diagnostics for matrices with different spectral characteristics, we consider synthetic test matrices from \cite[\S5]{TYUC17b}:
\begin{align}
  \mat{A} &= \diag( \underbrace{1,\ldots,1}_{R \textrm{ times}}, 0\ldots,0) + \xi d^{-1} \mat{G}\mat{G}^* \in \real^{d\times d}. \tag{NoisyLR} \label{eq:low_rank_plus_noise}\\
  \mat{A} &= \diag( \underbrace{1,\ldots,1}_{R \textrm{ times}},10^{-q},10^{-2q},\ldots,10^{-(d-R)q}) \tag{ExpDecay}\in \real^{d\times d}. \label{eq:exp_decay}
\end{align}
Here, $\xi,q\in\real$ are parameters, and $\mat{G} \in \real^{d\times d}$ is a standard Gaussian matrix. 
Using diagonal test matrices is justified by the observation that the randomized SVD, Nystr\"om approximation, and our diagnostics are orthogonally invariant when $\mat{\Omega}$ is a standard Gaussian matrix, which we use.
We also consider matrices from applications:
\begin{itemize}
\item \textbf{Velocity.} We consider a matrix $\mat{A} \in \real^{25096\times 1000}$ whose columns are snapshots of the velocity and pressure from simulations of a fluid flow past a cylinder.
  We thank Beverley McKeon and Sean Symon for this data.
\item \textbf{Spectral clustering.} The matrix $\mat{A}$ matrix from the spectral clustering example \cref{sec:spectral_clustering_numerics}.
\end{itemize}

We apply the jackknife variance estimate and leave-one-out error estimate to the randomized Nystr\"om approximation and randomized SVD with a standard Gaussian test matrix $\mat{\Omega}$ for a range of values for the approximation rank $s$.
For each value of $s$, we estimate the mean error $\Err$ \cref{eq:mean_error}, standard deviation $\Std$ \cref{eq:std}, mean jackknife estimate $\Jack$, or mean leave-one-our error estimator $\hat{\Err}$ using $1000$ independent trials.
Error bars on all figures show one standard deviation.

\subsection{Leave-one-out error estimator for randomized SVD}

First, we apply leave-one-out error estimation to estimate the error for the randomized SVD
\begin{equation}
    \mat{X} = \mat{U}\mat{\Sigma}\mat{V}^*.
\end{equation}
We set $q = 0$.
\Cref{fig:rsvd_loo} shows the results for three examples in the previous section.
In all cases, error estimate $\hat{\Err}(\mat{X},\mat{A})$ tracks the true error $\Err(\mat{X},\mat{A})$ closely.
Additional examples and analogous plots for randomized Nystr\"om approximation are provided in \cref{sec:addit-numer-exper}.

\begin{figure}[t]
  \centering
  \begin{subfigure}{0.32\textwidth}
      \includegraphics[width=0.99\textwidth]{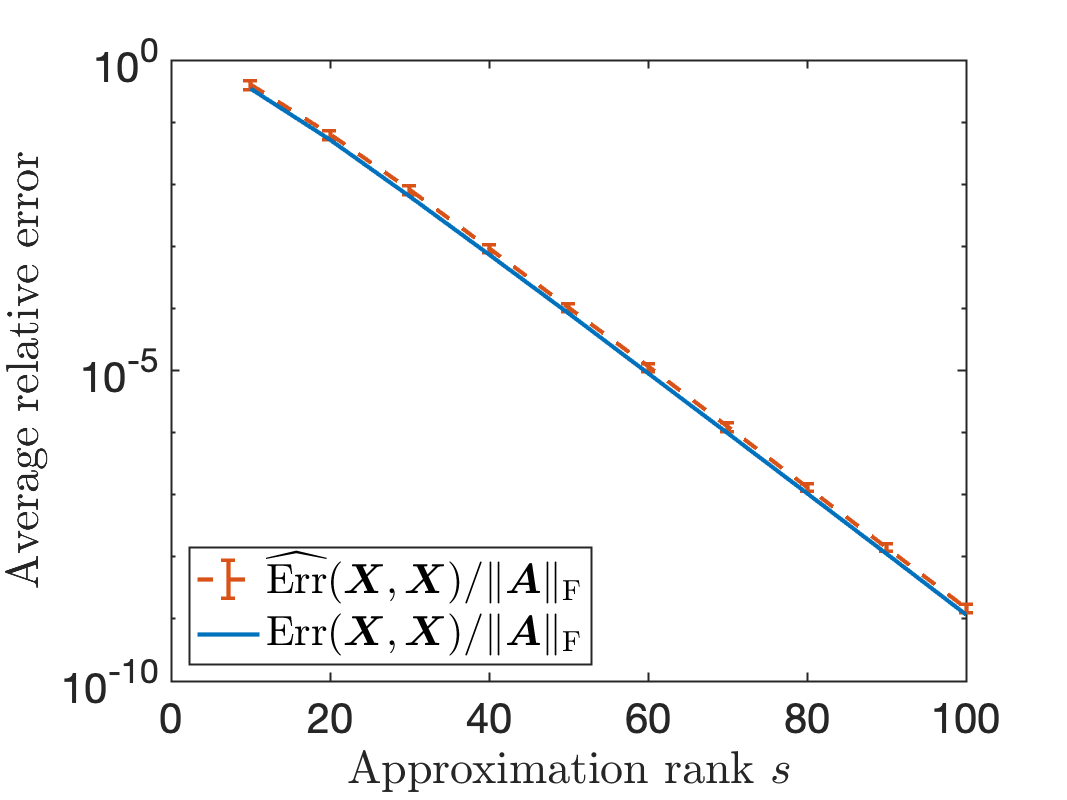}
      \caption*{\ref{eq:exp_decay}$(q \!=\! 0.1,R\!=\!5)$}
  \end{subfigure}
  \begin{subfigure}{0.32\textwidth}
      \includegraphics[width=0.99\textwidth]{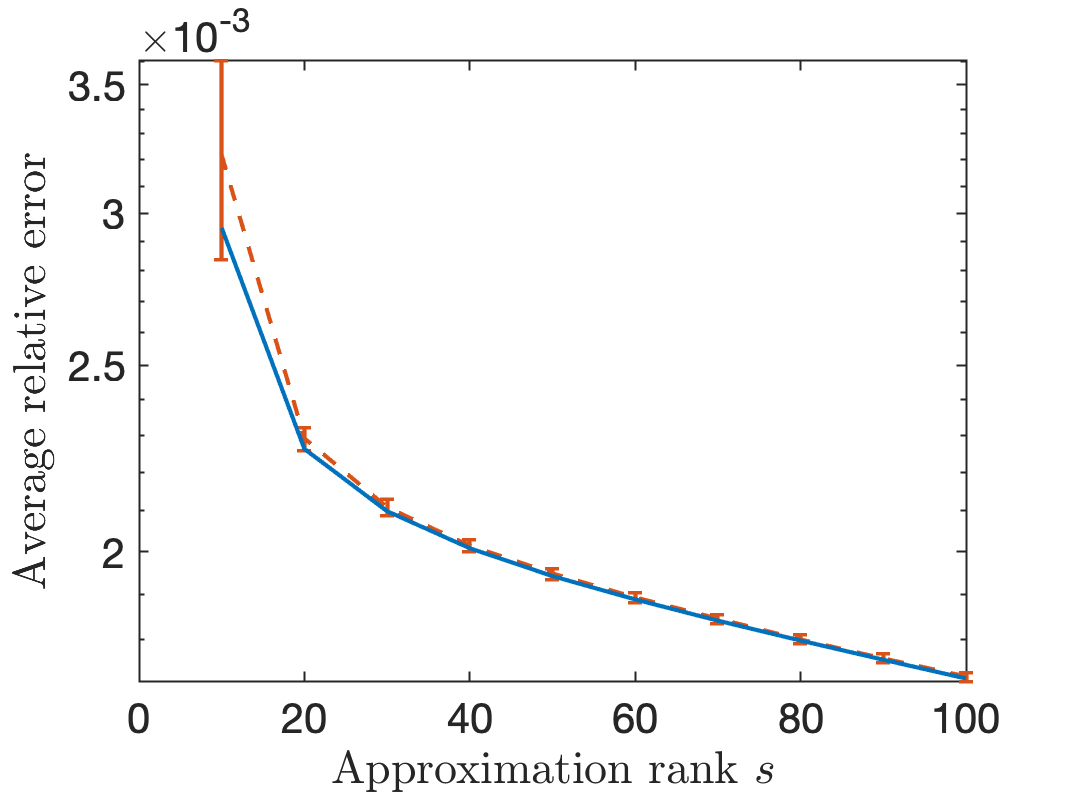}
      \caption*{\ref{eq:low_rank_plus_noise}$(\xi \!=\! 10^{-4},R\!=\!5)$}
  \end{subfigure}
  \begin{subfigure}{0.32\textwidth}
      \includegraphics[width=0.99\textwidth]{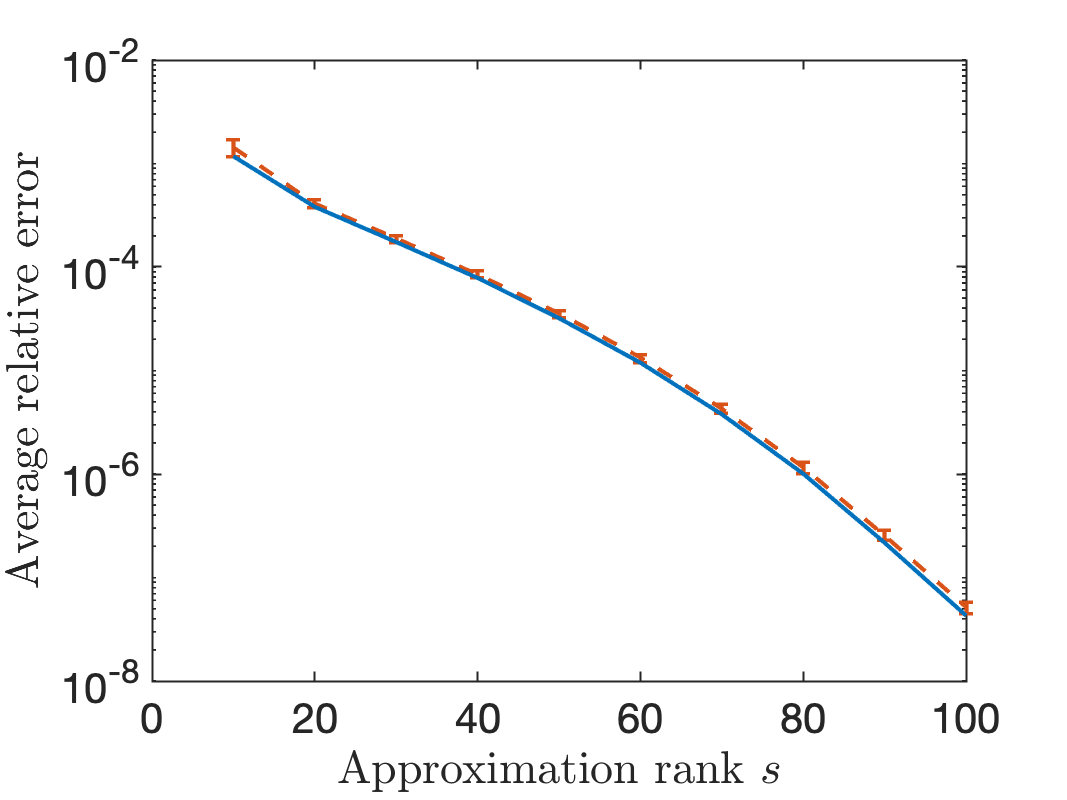}
      \caption*{Velocity}
  \end{subfigure}
  
  \mycaption{Leave-one-out error estimator for randomized SVD}{Error and error estimate for randomized SVD ($q=0$) approximation for three different matrices.}
  \label{fig:rsvd_loo}
\end{figure}

\subsection{Matrix jackknife for projectors onto singular subspaces}

Consider the task of computing the projector $\mat{\Pi}$ onto the dominant $5$-dimensional right singular subspace to a matrix $\mat{A}$.
The randomized SVD $\mat{A} \approx \mat{U}\mat{\Sigma}\mat{V}^*$ yields the approximation 
\begin{equation} \label{eq:singular_projector}
    \mat{X} = \mat{V}(:,1:5)\mat{V}(:,1:5)^*.
\end{equation}
\Cref{fig:rsvd_jack} shows the mean error $\Err(\mat{X},\mat{\Pi})$, standard deviation $\Std(\mat{X})$, and the jackknife estimate $\Jack(\mat{X})$ for the same three test matrices as previous.
We again set $q = 0$.
Consistent with \cref{thm:jackknife}, the jackknife estimate $\Jack(\mat{X})$ is an \emph{overestimate} of $\Std(\mat{X})$ by a factor of $2\times$ to $8\times$.
While the jackknife is not quantitatively sharp, it provides an order of magnitude estimate of the standard deviation and is a useful diagnostic for the quality of the computed output.
Additional examples and plots for randomized Nystr\"om approximations of projectors onto invariant subspaces are provided in \cref{sec:addit-numer-exper}.

\begin{figure}[t]
  \centering
  \begin{subfigure}{0.32\textwidth}
      \includegraphics[width=0.99\textwidth]{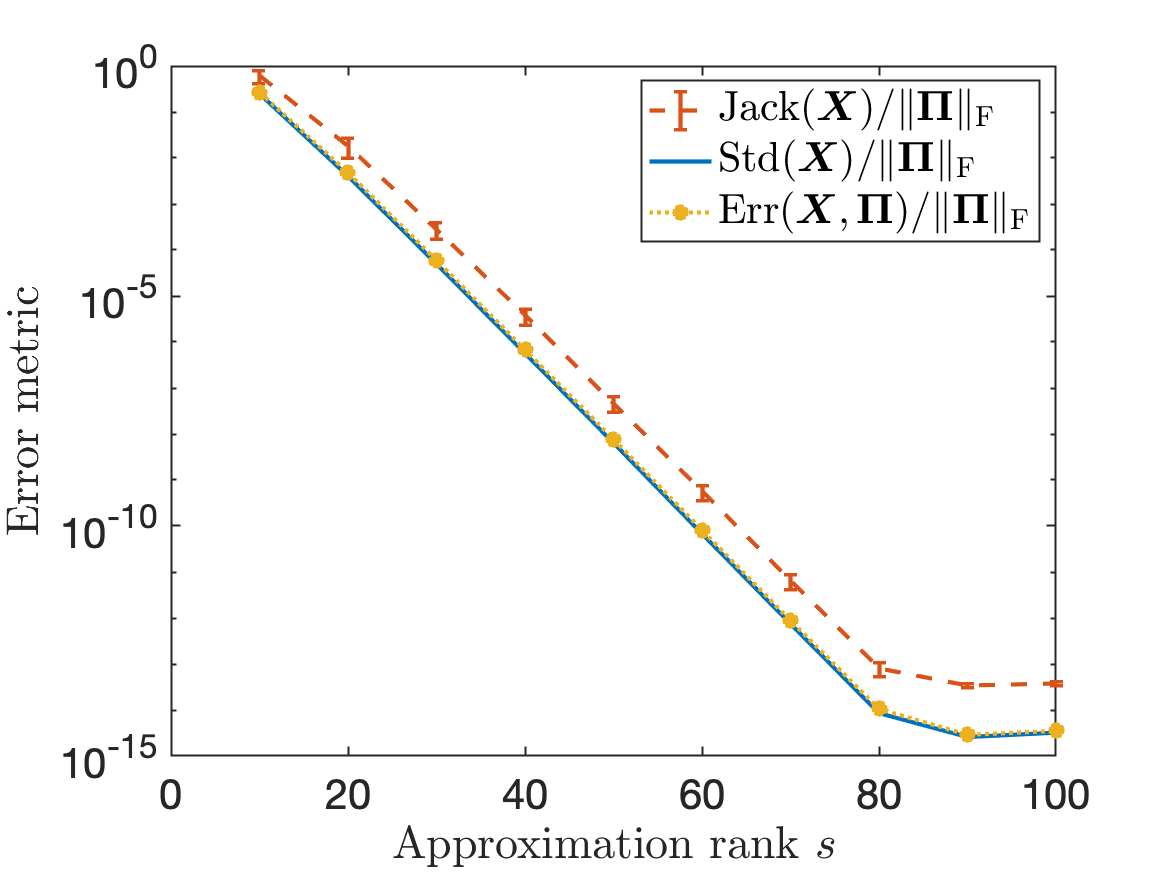}
      \caption*{\ref{eq:exp_decay}$(q \!=\! 0.1,R\!=\!5)$}
  \end{subfigure}
  \begin{subfigure}{0.32\textwidth}
      \includegraphics[width=0.99\textwidth]{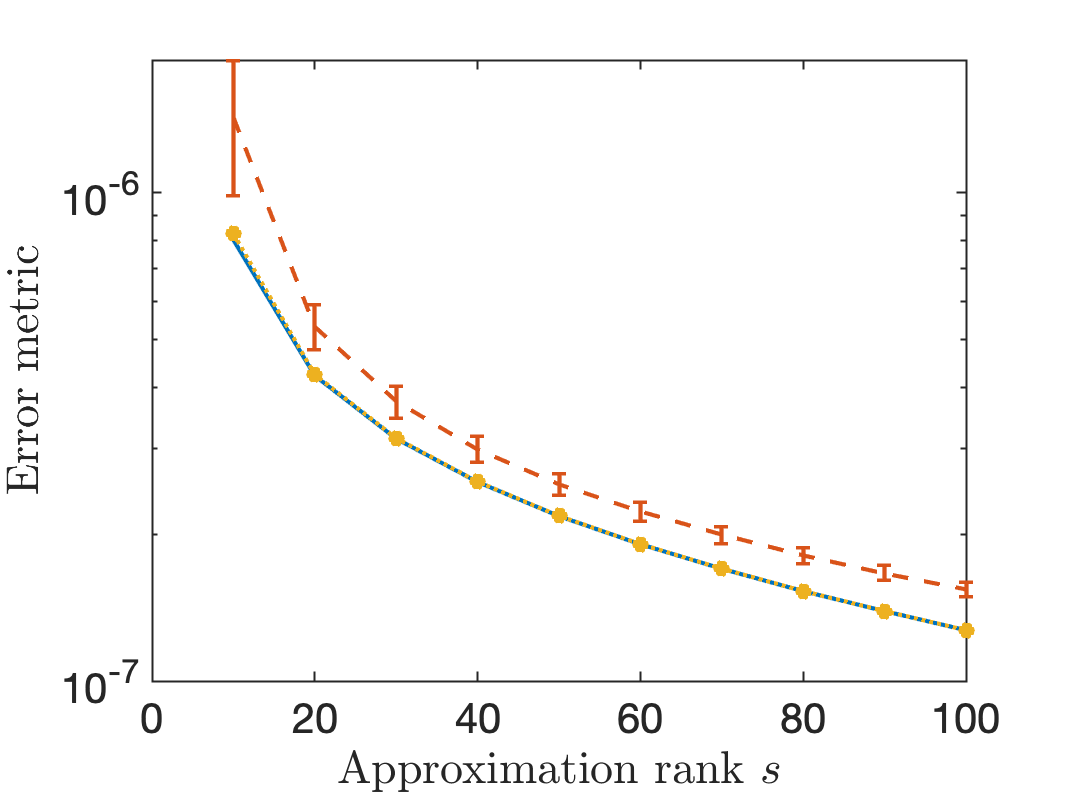}
      \caption*{\ref{eq:low_rank_plus_noise}$(\xi \!=\! 10^{-4},R\!=\!5)$}
  \end{subfigure}
  \begin{subfigure}{0.32\textwidth}
      \includegraphics[width=0.99\textwidth]{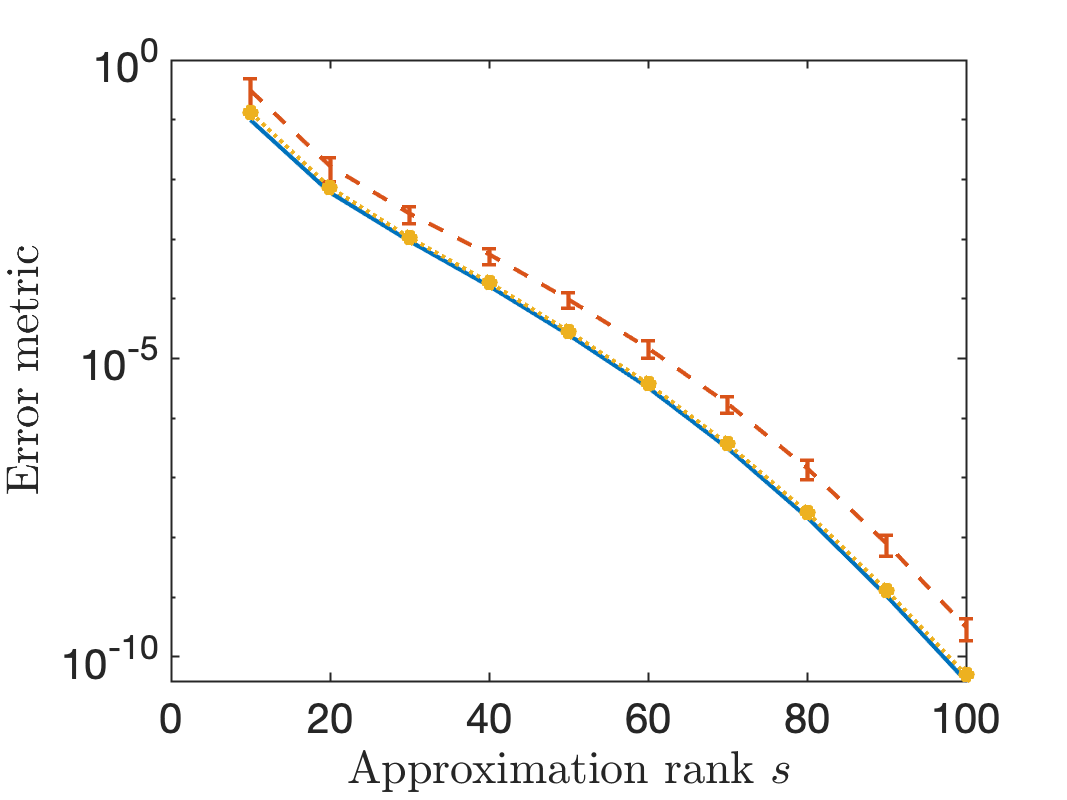}
      \caption*{Velocity}
  \end{subfigure}
  
  \mycaption{Matrix jackknife for projectors onto singular subspaces}{Error, standard deviation, and jackknife estimate for randomized SVD ($q=0$) approximation $\mat{X}$ \cref{eq:singular_projector} to the projector $\mat{\Pi}$ onto the span of the five dominant right singular vectors for three different matrices.}
  \label{fig:rsvd_jack}
\end{figure}

\subsection{Application: Diagnosing ill-conditioning for spectral clustering}

Jackknife variance estimates can be used to identify situations when a computational task is ill-posed or ill-conditioned.
In such cases, refining the approximation (i.e., increasing $s$ or $q$) may be of little help to improve the quality of the computation.

As an example, consider the spectral clustering application from \cref{sec:spectral_clustering}.
In Nystr\"om-accelerated spectral clustering, we use the dominant $n_{\rm dim}$ eigenvectors of a Nystr\"om approximation $\mat{V}\mat{\Lambda}\mat{V}^*$ as coordinates for k-means clustering. 
For spectral clustering to be reliable, we should pick the parameter $n_{\rm dim}$ such that these coordinates are well-conditioned; that is, they should not be highly sensitive to small changes in the normalized kernel matrix $\mat{A}$.
For the example in \cref{sec:spectral_clustering_numerics}, the five largest eigenvalues of $\mat{A}$ are
\begin{align*}
    \lambda_1 &= 0.999999999999999 \\
    \lambda_2 &= 0.999999998639842 \\
    \lambda_3 &= 0.999999940523446 \\
    \lambda_4 &= 0.999999931126177 \\ 
    \lambda_5 &= 0.99\mathbf{7}867975285136 
\end{align*}
The first four eigenvalues agree up to eight digits of accuracy, with the fifth eigenvalue separated by $\lambda_4 - \lambda_5 \approx 2\times 10^{-3}$.
Based on these values, the natural parameter setting would be $n_{\rm dim} = 4$, as the first four eigenvalues are nearly indistinguishable but are well-separated from the fifth.

\begin{figure}[t]
  \centering
  \includegraphics[width=0.7\textwidth]{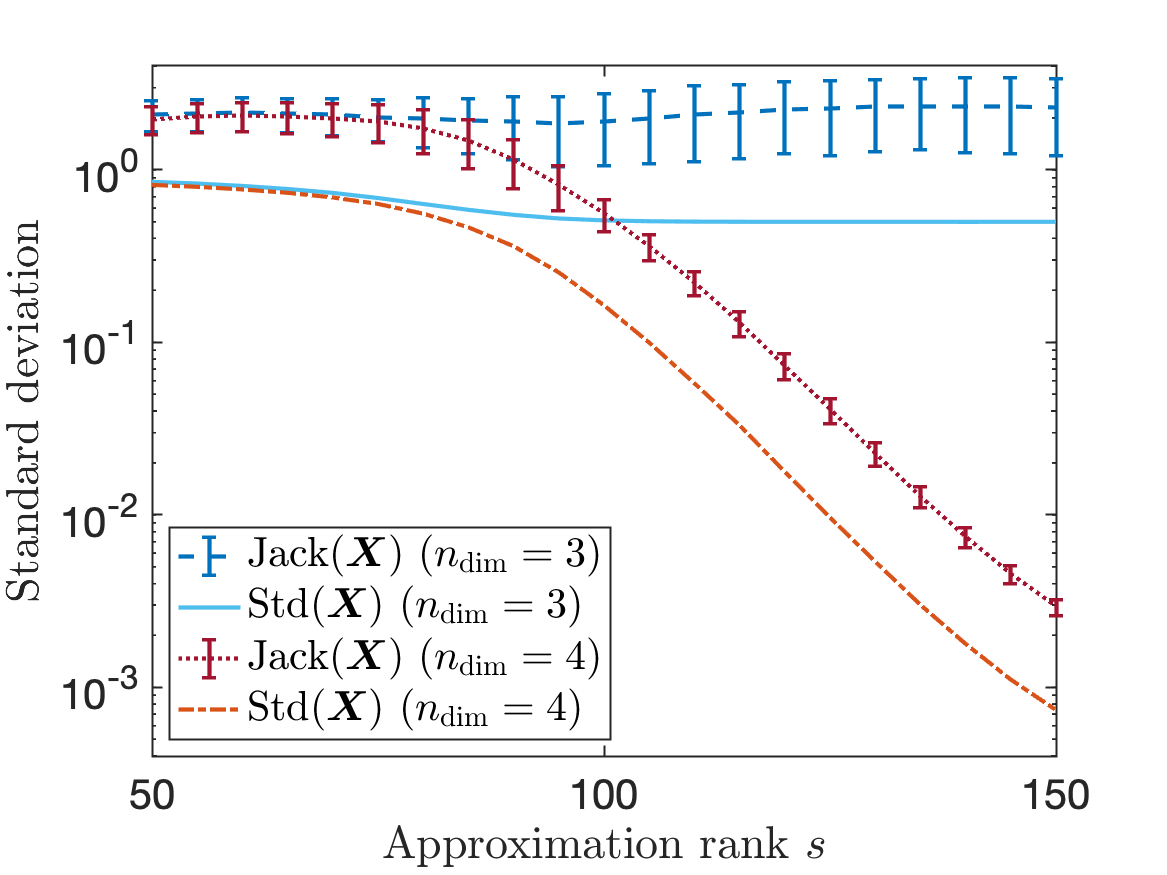}
  
  \mycaption{Detecting ill-conditioning}{Standard deviation and jackknife estimate for the Nystr\"om spectral projector \cref{eq:spectral_projector} associated with the dominant $n_{\rm dim}$-dimensional invariant subspace for $n_{\rm dim} \in \{3,4\}$ for the spectral clustering matrix.}
  \label{fig:illconditioned}
\end{figure}

When we use Nystr\"om-accelerated spectral clustering, we do not have access to the true eigenvalues of the matrix $\mat{A}$ and thus can have difficulties selecting the parameter $n_{\rm dim}$ appropriately.
Fortunately, the jackknife variance estimate can help warn the user of a poor choice for $n_{\rm dim}$.
Let 
\begin{equation} \label{eq:spectral_projector}
    \mat{X} = \mat{V}(:,1:n_{\rm dim})\mat{V}(:,1:n_{\rm dim})^*
\end{equation}
be the orthoprojector onto the dominant $n_{\rm dim}$-dimensional invariant subspace of the Nystr\"om approximation $\mat{V}\mat{\Lambda}\mat{V}^*$.
\Cref{fig:illconditioned} shows the standard deviation $\Std(\mat{X})$ and its jackknife estimate $\Jack(\mat{X})$ for both $n_{\rm dim} = 3$ and $n_{\rm dim} = 4$.
As in \cref{sec:spectral_clustering_numerics}, use $q = 3$ and $50 \le s \le 150$.
For the good parameter setting $n_{\rm dim} = 4$, the variance decreases sharply as $s$ is increased.
For the bad choice $n_{\rm dim} = 3$, the variance remains persistently high, even as the approximation is refined.
This provides evidence to the user that $\mat{X}$ is poorly conditioned and allows the user to fix this by changing the parameter $n_{\rm dim}$.

\subsection{Application: POD modes}

\begin{figure}[t]
  \centering
  \begin{subfigure}[b]{0.48\textwidth}
    \centering
    \includegraphics[width=\textwidth]{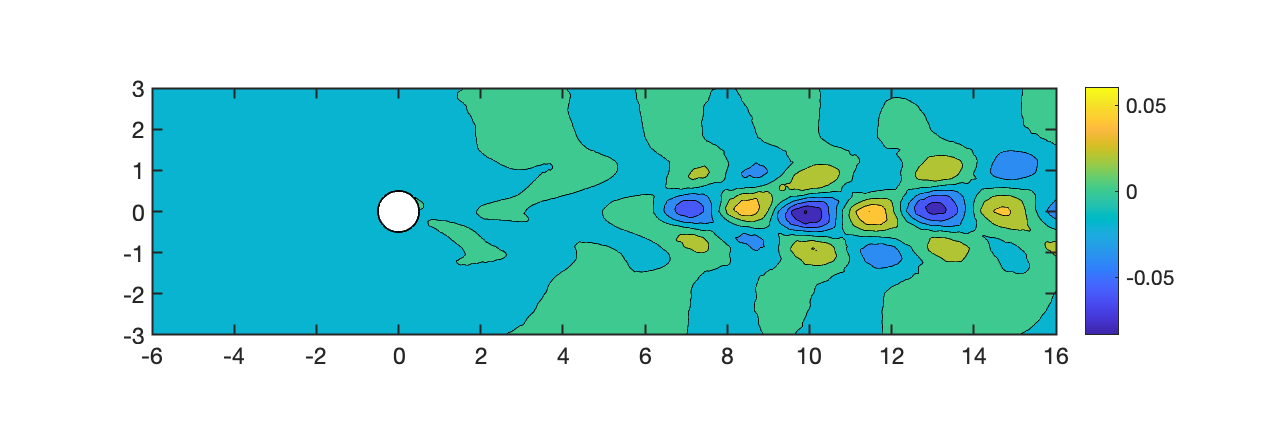}
    \caption{Exact} \label{fig:vel_exact}
  \end{subfigure}
  ~
  \begin{subfigure}[b]{0.48\textwidth}
    \centering
    \includegraphics[width=\textwidth]{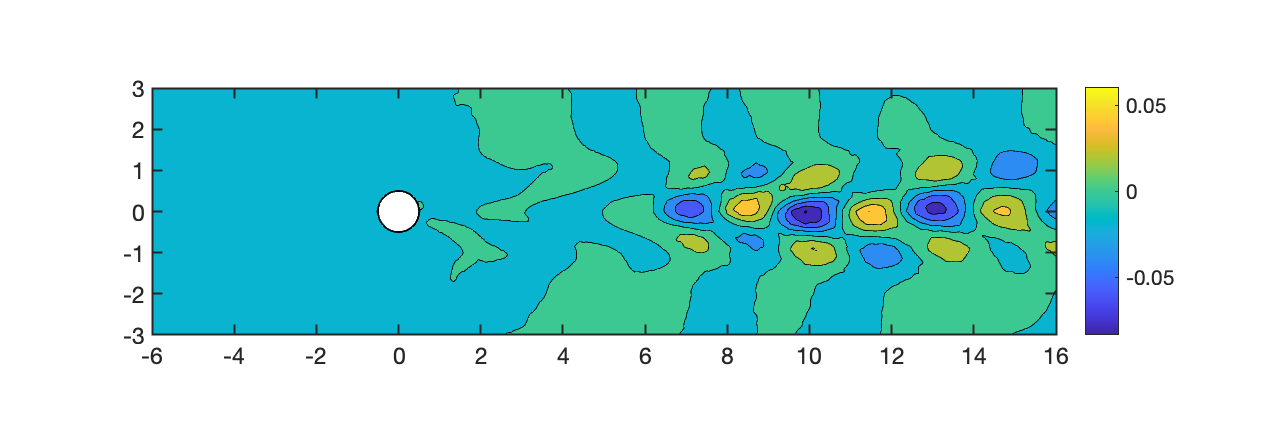}
    \caption{Randomized SVD ($s=20$, $q=0$)} \label{fig:vel_approx}
  \end{subfigure}

  \begin{subfigure}[b]{0.48\textwidth}
    \centering
    \includegraphics[width=\textwidth]{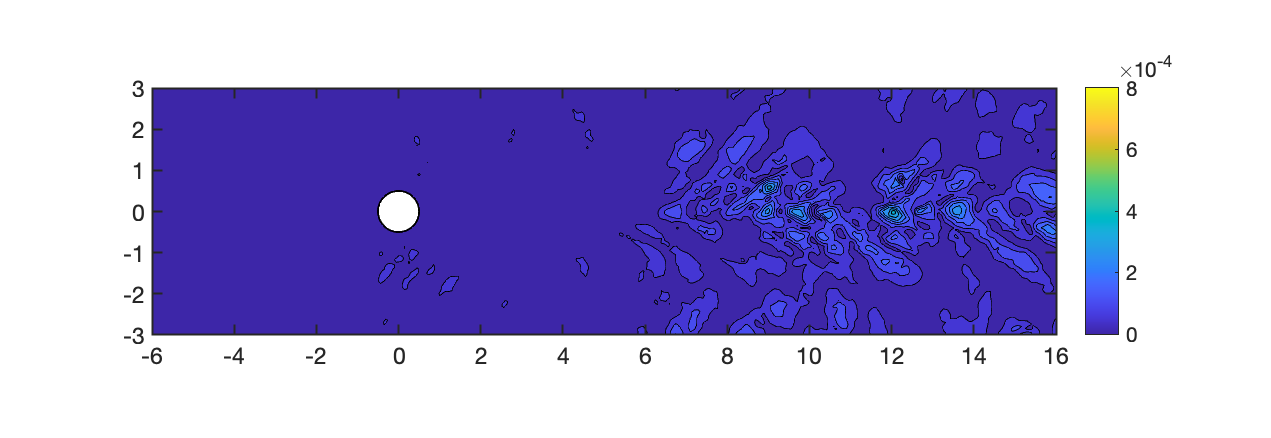}
    \caption{Error} \label{fig:vel_diff}
  \end{subfigure}
  \begin{subfigure}[b]{0.48\textwidth}
    \centering
    \includegraphics[width=\textwidth]{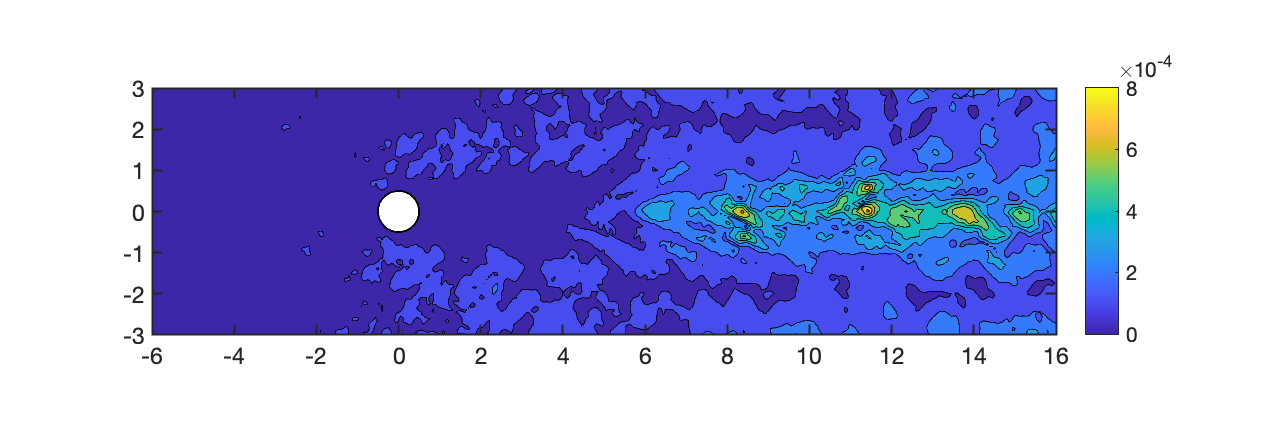}
    \caption{Jackknife standard deviation estimate} \label{fig:vel_jack}
  \end{subfigure}
  
  \mycaption{Jackknife for singular vector}{These panels assess the entrywise errors in the streamwise velocity from the fifth left singular vector of the velocity test matrix.
    Panel (\subref{fig:vel_exact}) shows the exact answer, and panel (\subref{fig:vel_approx}) shows the estimate produced by the randomized SVD.
    Panels (\subref{fig:vel_diff}) and (\subref{fig:vel_jack}) display the error and the the Tukey jacknife standard deviation estimate.
  The jackknife estimate presents a descriptive portrait of where the error is localized.}
  \label{fig:vel}
\end{figure}

Jackknife variance estimation can be used to give more fine-grained information about a randomized matrix computation.
In \cref{fig:vel}, we compute a (scalar) jackknife variance estimate for the absolute value of each entry of the fifth left singular vector of the velocity matrix computed using the randomized SVD with $s = 20$ and $q=0$.
(The absolute value is introduced to avoid sign ambiguities.)

Left singular vectors of a matrix of simulation data are known as POD modes and are useful for data visualization and model reduction.
Since variance is a lower bound on the mean-square error, the coordinate-wise jackknife variance estimates can be used a diagnostic to help identify regions of a POD mode that have high error.
This is demonstrated in \cref{fig:vel}; the jackknife estimate is not quantitatively sharp but paints a descriptive portrait of where the errors are localized.

\section{Extension: Variance estimates for higher Schatten norms}
\label{sec:high-schatt-norms}

The variance estimate $\Jack(\mat{X})$ serves as an estimate for the \emph{Frobenius-norm} variance
\begin{equation*}
  \Jack^2(\mat{X}) \approx \Var(\mat{X}) = \expect \norm{\mat{X} - \expect \mat{X}}^2_{\rm F}.
\end{equation*}
Often, it is more desirable to have error or variance estimates for Schatten norms $\schatten{\cdot}_p$ with $p > 2$, defined as the $\ell_p$ norm of the singular values:
\begin{equation*}
  \schatten{\mat{B}}_p^p \coloneqq \sum_{j=1}^{\min(d_1,d_2)} \sigma_j^p (\mat{B}).
\end{equation*}
One can also construct jackknife estimates for the variance in higher Schatten norms, although the estimates take more intricate forms.
For this section, fix an even number $p\ge 2$ and assume the same setup as \cref{sec:matr-jackkn-estim} with the additional stipulation that the samples $\omega_1,\ldots,\omega_s$ take values in a Polish space $\Omega$.

The jackknife variance estimate is defined as follows.
Consider matrix-valued jackknife \emph{variance proxies}:
\begin{equation*}
  \mat{\widehat{\Var}}_1(\mat{X}_{s-1}) \coloneqq \frac{1}{2} \sum_{j=1}^{s-1} \matabssq{\mat{X}^{(s)} - \mat{X}^{(j)}} \quad \textrm{and} \quad \mat{\widehat{\Var}}_2(\mat{X}_{s-1}) \coloneqq \frac{1}{2} \sum_{j=1}^{s-1} \matabssq{\left(\mat{X}^{(s)} - \mat{X}^{(j)}\right)^*}.
\end{equation*}
Here, $|\cdot|$ denotes the matrix modulus $|\mat{B}| \coloneqq (\mat{B}^*\mat{B})^{1/2}$.
Define the Schatten $p$-norm variance estimate
\begin{equation*}
  \Jack_p(\mat{X}_{s-1}) \coloneqq 2^{-\tfrac{1}{p}}\sqrt{2(p-1)} \mleft( \schatten{\mat{\widehat{\Var}}_1}_{p/2}^{p/2} + \schatten{\mat{\widehat{\Var}}_2}_{p/2}^{p/2} \mright)^{1/p}.
\end{equation*}
This quantity seeks to approximate
\begin{equation*}
  \Jack_p^p(\mat{X}_{s-1}) \approx \expect \schatten{\mat{X}_{s-1} - \expect\mat{X}_{s-1}}_p^p.
\end{equation*}
A matrix generalization of the Efron--Stein--Steele inequality \cite[Thm.~4.2]{PMT16} shows this jackknife variance estimate overestimates the Schatten $p$-norm variance in the sense
\begin{equation*}
  \expect \schatten{\mat{X}_{s-1} - \expect\mat{X}_{s-1}}_p^p \le \expect \, \Jack_p^p(\mat{X}_{s-1}).
\end{equation*}
The techniques we introduce in \cref{sec:computations} can be extended in a natural way to compute $\Jack_p(\mat{X}_{s-1})$ efficiently for the randomized SVD and Nystr\"om approximation.

\appendix 

\section{Derivation of update formulas}

In this section, we provide proofs of the update formulas \cref{eq:nys_update} and \cref{eq:rsvd_Q_update}.

\subsection{Proof of \cref{eq:nys_update}}
\label{app:nystrom_update}

Assume without loss of generality that $j = s$, instate the notation of \cref{sec:nystrom}, and assume $\mat{H}$ is invertible.
The key ingredient is the following consequence of the Banachiewicz inversion formula \cite[eq.~(0.7.2)]{PS05}:
\begin{equation*}
    \twobytwo{\big(\mat{H}^{(s)}\big)^{-1}}{\vec{0}}{\vec{0}^*}{0} = \mat{H}^{-1} - \frac{\mat{H}^{-1}\evec_s^{\vphantom{*}}\evec_s^* \mat{H}^{-1}}{\evec_s^*\mat{H}^{-1}\evec_s^{\vphantom{*}}}.
\end{equation*}
Using this formula and letting $\mat{R}_{-s}$ denote $\mat{R}$ without its $s$th column, we compute
\begin{equation*}
    \mat{X}^{(s)} = \mat{Q}\mat{R}_{-s}^{\vphantom{*}} \big(\mat{H}^{(s)}\big)^{-1}\mat{R}_{-s}^*\mat{Q}^* = \mat{X} - \frac{\mat{Q}\mat{R}\mat{H}^{-1}\evec_s^{\vphantom{*}}\evec_s^*\mat{H}^{-1}\mat{R}^*\mat{Q}^*}{\evec_s^*\mat{H}^{-1}\evec_s^{\vphantom{*}}}.
\end{equation*}
Since $\mat{Q} = \mat{V}\mat{U}^*$ and $\mat{X} = \mat{V}\mat{\Lambda}\mat{V}^*$, we thus have
\begin{equation*}
    \mat{X}^{(s)} = \mat{V} \mleft( \mat{\Lambda} - \frac{\mat{U}^*\mat{R}\mat{H}^{-1}\evec_s^{\vphantom{*}}\evec_s^*\mat{H}^{-1}\mat{R}^*\mat{Q}^*\mat{U}}{\evec_s^*\mat{H}^{-1}\evec_s^{\vphantom{*}}} \mright) \mat{V}^* = \mat{V} \mleft(\mat{\Lambda} - \vec{t}_s^{\vphantom{*}} \vec{t}_s^* \mright) \mat{V}^*,
\end{equation*}
where $\vec{t}_s$ is defined in \cref{eq:nys_update_t}.
The formula is established. \hfill $\proofbox$

\subsection{Proof of \cref{eq:rsvd_Q_update}}
\label{app:rsvd_update}
Fix $j \in \{1,\ldots,s\}$, instate the notation of \cref{sec:rsvd}, and assume $\mat{R}$ is invertible.
First observe that
\begin{equation*}
    \mat{Y}^{(j)} = \mat{Q} \mat{R}_{-j},
\end{equation*}
where $\mat{R}_{-j}$ is $\mat{R}$ without its $j$th column.
To compute $\mat{Q}^{(j)}$, we need an economy \QR factorization of $\mat{Y}^{(j)} = \mat{Q}\mat{R}_{-j}$.
To this end, compute a (full) \QR decomposition of $\mat{R}_{-j}$:
\begin{equation} \label{eq:Rj_QR}
    \mat{R}_{-j} = \onebytwo{\mat{Q}'}{\vec{t}_j} \twobyone{\mat{R}^{(j)}}{\vec{0}^*},
\end{equation}
where $\mat{Q}' \in \field^{s\times (s-1)}$, $\mat{R}^{(j)} \in \field^{(s-1)\times (s-1)}$, and $\vec{t}_j \in \field^s$.
Then
\begin{equation*}
    \mat{Y}^{(j)} = \mat{Q}^{(j)} \mat{R}^{(j)} \quad \text{for} \quad \mat{Q}^{(j)} = \mat{Q}\mat{Q}'
\end{equation*}
is an economy \QR decomposition of $\mat{Y}^{(j)}$.
Since $\onebytwo{\mat{Q}'}{\vec{t}_j}$ is orthogonal, we have
\begin{equation*}
    \Id = \onebytwo{\mat{Q}'}{\vec{t}_j} \onebytwo{\mat{Q}'}{\vec{t}_j}^* = \mat{Q}'(\mat{Q}')^* + \vec{t}_j^{\vphantom{*}} \vec{t}_j^* \implies \mat{Q}'(\mat{Q}')^* = \Id - \vec{t}_j^{\vphantom{*}} \vec{t}_j^*.
\end{equation*}
Thus,
\begin{equation*}
    \mat{Q}^{(j)} \bigl( \mat{Q}^{(j)}\bigr)^* = \mat{Q}\mat{Q}'(\mat{Q}')^* \mat{Q}^* = \mat{Q} \mleft( \Id - \vec{t}_j^{\vphantom{*}}\vec{t}_j^*\mright) \mat{Q}^*.
\end{equation*}
Finally, observe that \cref{eq:Rj_QR} implies that $\vec{t}_j$ is orthogonal to the columns of $\mat{R}_{-j}$ so 
\begin{equation*}
    \vec{t}_j^* \mat{R} \quad \text{is a nonzero multiple of $\mathbf{e}_j^*$}.
\end{equation*}
Therefore, $\vec{t}_j$ is proportional to the $j$th column of $(\mat{R}^*)^{-1}$.
Since $\vec{t}_j$ is a column of an orthogonal matrix, it is thus a normlized version of the $j$th column of $(\mat{R}^*)^{-1}$. \hfill $\proofbox$

\section*{Acknowledgements}

We thank Per-Gunnar Martinsson, Eitan Levin, and Robert Webber for their advice and feedback.

\section*{Disclaimer}

This report was prepared as an account of work sponsored by an agency of the United States Government. Neither the United States Government nor any agency thereof, nor any of their employees, makes any warranty, express or implied, or assumes any legal liability or responsibility for the accuracy, completeness, or usefulness of any information, apparatus, product, or process disclosed, or represents that its use would not infringe privately owned rights. Reference herein to any specific commercial product, process, or service by trade name, trademark, manufacturer, or otherwise does not necessarily constitute or imply its endorsement, recommendation, or favoring by the United States Government or any agency thereof. The views and opinions of authors expressed herein do not necessarily state or reflect those of the United States Government or any agency thereof.

\bibliographystyle{siamplain}
\bibliography{references}

\newpage
\begin{center} \textbf{SUPPLEMENTARY MATERIAL}
\end{center}

\setcounter{section}{0} \renewcommand{\thesection}{SM\arabic{section}}
\setcounter{figure}{0} \renewcommand{\thefigure}{SM\arabic{figure}}
\renewcommand\appendixname{Section}

\section{More on efficient jackknife algorithms}
In this section, we discuss additional implementation details for fast computation of the jackknife variance estimate.
MATLAB R2022b implementations are provided in \cref{sec:matlab}.

\subsection{Spectral computations with Nystr\"om approximation}
\label{sec:spectral_nystrom}
One of the great advantages of jackknife variance estimation is can be applied to a wide variety of objects such as eigenvalues, eigenvectors, projectors onto invariant subspaces, and truncations of a matrix to fixed rank $r < s$.
Computing jackknife variance estimates \emph{efficiently} requires care. 
The essential ingredient is $\order(s^2)$ algorithms for computing the eigendecomposition of a rank-one modification of a diagonal matrix \cite[\S5.3.3]{Dem97}.

As an example, consider the jackknife variance estimate for the projector onto the dominant eigenspace
\begin{equation*}
    \mat{X} = \vec{v}\vec{v}^* \quad \text{where} \quad \vec{v} = \mat{V}(:,1)
\end{equation*}
of the Nystr\"om approximation.
Using the update formula \cref{eq:nys_update}, the replicates take the form
\begin{equation*}
    \mat{X}^{(j)} = \vec{v}^{(j)}\mleft(\vec{v}^{(j)}\mright)^* \quad \text{where $\vec{v}^{(j)}$ is the dominant eigenvector of} \quad \mat{V}\mleft(\mat{\Lambda} - \vec{t}_j^{\vphantom{*}}\vec{t}_j^*\mright)\mat{V}^*.
\end{equation*}
To compute the replicates efficiently, we make use of the fact that the eigendecomposition
\begin{equation} \label{eq:rank_one_modification}
    \mat{\Lambda} - \vec{t}_j^{\vphantom{*}}\vec{t}_j^* = \mat{W}_j^{\vphantom{*}}\mat{\Lambda}_j \mat{W}_j^*
\end{equation}
of a rank-one modification of a diagonal matrix can be computed in $\order(s^2)$ operations; see \cite[\S5.3.3]{Dem97}.
By computing the eigendecomposition \cref{eq:rank_one_modification} for each $j = 1,\ldots,s$, we have a representation of the replicates
\begin{equation*}
    \mat{X}^{(j)} = \mat{V}\mat{W}_j(:,1)\mat{W}_j(:,1)^*\mat{V}^*.
\end{equation*}
Therefore, the mean of the replicates is
\begin{equation*}
    \mat{X}^{(\cdot)} = \mat{V}\mat{M}\mat{V}^* \quad \text{where} \quad \mat{M} = \frac{1}{s} \sum_{j=1}^s \mat{W}_j(:,1)\mat{W}_j(:,1)^*
\end{equation*}
and the jackknife variance estimate is
\begin{align*}
    \Jack^2(\mat{X}) = \sum_{j=1}^s \norm{\mat{V}\mleft(\mat{W}_j(:,1)\mat{W}_j(:,1)^* - \mat{M}\mright)\mat{V}^*}_{\rm F}^2 = \sum_{j=1}^s \norm{\mat{W}_j(:,1)\mat{W}_j(:,1)^* - \mat{M}}_{\rm F}^2.
\end{align*}
Using this formula, we can compute $\Jack(\mat{X})$ in $\order(s^3)$ operations, independent of the dimension $d$ of the input matrix.

\subsection{Efficient jackknife procedures for spectral clustering} \label{sec:efficient_spectral_clustering}
As a more elaborate example, we sketch the $\order(ds^2)$ algorithm used to produce the jackknife variance estimates from \cref{sec:spectral_clustering}.
Instate the notation of \cref{sec:spectral_clustering}.
We begin, as in the previous section, by computing the eigendecomposition \cref{eq:rank_one_modification} for $j = 1,\ldots,s$, requiring $\order(s^3)$ operations in total.
Let $\mat{Q}_j$ denote the first $n_{\rm dim}$ columns of $\mat{W}_j$.
The jackknife replicates are
\begin{equation} \label{eq:spectral_clustering_replicates}
    \mat{X}^{(j)} = \frac{\mat{D}^{-1/2}\mat{V}\mat{Q}_j^{\vphantom{*}}\mat{Q}_j^*\mat{V}^*\mat{D}^{-1/2}}{\norm{\mat{D}^{-1/2}\mat{V}\mat{Q}_j^{\vphantom{*}}\mat{Q}_j^*\mat{V}^*\mat{D}^{-1/2}}_{\rm F}} \quad \text{for $j = 1,\ldots,s$}.
\end{equation}
Introduce
\begin{equation*}
    \mat{G} \coloneqq \mat{V}^*\mat{D}^{-1}\mat{V}, \quad \mat{F}_j \coloneqq \mat{Q}_j/\norm{\mat{Q}_j^*\mat{G}\mat{Q}_j}_{\rm F}^{1/2}.
\end{equation*}
Then the replicates can be written as
\begin{equation*}
    \mat{X}^{(j)} = \mat{D}^{-1/2}\mat{V}\mat{F}_j^{\vphantom{*}}\mat{F}_j^*\mat{V}^*\mat{D}^{-1/2} \quad \text{for $j = 1,\ldots,s$}
\end{equation*}
and their mean is
\begin{equation*}
    \mat{X}^{(\cdot)} = \mat{D}^{-1/2}\mat{V}\mat{M}\mat{V}^*\mat{D}^{-1/2}\quad \text{where}\quad \mat{M} = \frac{1}{s}\sum_{j=1}^s \mat{F}_j^{\vphantom{*}}\mat{F}_j^*.
\end{equation*}
The jackknife variance estimate is
\begin{align*}
    \Jack^2(\mat{X})
    &= \sum_{j=1}^s \norm{\mat{D}^{-1/2}\mat{V}\mleft(\mat{F}_j^{\vphantom{*}}\mat{F}_j^* - \mat{M}\mright)\mat{V}^*\mat{D}^{-1/2}}_{\rm F}^2 \\
    &= \sum_{j=1}^s \tr \mleft(\mat{G}\mleft(\mat{F}_j^{\vphantom{*}}\mat{F}_j^* - \mat{M}\mright)\mat{G}\mleft(\mat{F}_j^{\vphantom{*}}\mat{F}_j^* - \mat{M}\mright)\mright) \\
    &= s \tr(\mat{G}\mat{M}\mat{G}\mat{M}) - 2\sum_{j=1}^s  \tr\mleft(\mat{F}_j^*\mat{G}\mat{M}\mat{G}\mat{F}_j^{\vphantom{*}}\mright) + \sum_{j=1}^s \norm{\mat{F}_j^*\mat{G}\mat{F}_j^{\vphantom{*}}}^2_{\rm F}.
\end{align*}
Using this formula, $\Jack(\mat{X})$ can be formed in $\order(ds^2)$ operations.
(In fact, the most expensive part of the calculation is the formation of $\mat{G}$; everything else requires $\order(s^3)$ operations.)

\begin{algorithm}[t]
  \caption{Nystr\"om-accelerated spectral clustering with variance estimate} \label{alg:spectral_clustering}
  \textbf{Input:} Data points $\vec{c}_1,\ldots,\vec{v}_d$, dimension $n_{\rm dim}$ of clustering space, number of clusters $n_{\rm cen}$, kernel function $\kappa : \real^d \times \real^d \to \real_+$, and Nystr\"om approximation rank $s$ and subspace iteration steps $q$ \\
  \textbf{Output:} Clusters $\mathsf{C}$ and jackknife estimate $\Jack$
  \begin{algorithmic}[1]
    \For{$i,j=1,\ldots,d$}
        \State $k_{ij} \leftarrow \kappa(\vec{c}_i,\vec{c}_j)$
    \EndFor
    \State $\mat{D} \leftarrow \diag\big( \sum_{j=1}^d k_{ij} : i=1,\ldots,d \big)$
    \State $\mat{\Phi} \leftarrow \mathtt{randn}(d,s)$
    \For{$i = 1,\ldots,q$}
        \State $\mat{\Phi} \leftarrow \mat{D}^{-1/2}(\mat{K}(\mat{D}^{-1/2}\mat{\Phi}))$
    \EndFor 
    \State $\mat{Y} \leftarrow \mat{D}^{-1/2}(\mat{K}(\mat{D}^{-1/2}\mat{\Phi}))$
    \State $\nu \leftarrow \epsilon_{\rm mach} \norm{\mat{Y}}$ and  $\mat{Y} \leftarrow \mat{Y} + \nu \mat{\Phi}$ \Comment{Shift for numerical stability}
    \State $(\mat{Q},\mat{R}) \leftarrow \mathtt{qr}(\mat{Y}, \texttt{'econ'})$ \Comment{Economy \QR Factorization}
    \State $\mat{H} \leftarrow \mat{\Omega}^*\mat{Y}$
    \State $\mat{C} \leftarrow \mathtt{chol}((\mat{H}+\mat{H}^*)/2)$ \Comment{Upper triangular Cholesky decomposition $\mat{H} = \mat{C}^*\mat{C}$}
    \State $(\mat{U},\mat{\Sigma},\sim) \leftarrow \mathtt{svd}(\mat{R}\mat{C}^{-1})$ \Comment{Triangular solve}
    \State $\mat{\Lambda} \leftarrow \max(\mat{\Sigma}^2-\nu\mathbf{I},0)$ \Comment{Entrywise maximum, shift back for numerical stability} 
    \State $\mat{V} \leftarrow \mat{Q}\mat{U}$, $\mat{W} \leftarrow \mat{D}^{-1/2}\mat{V}(:,1:n_{\rm cen})$
    \State $\mathsf{C} \leftarrow \mathtt{kmeans}(\{\mat{W}(:,i) : i = 1,\ldots,d\},n_{\rm cen})$
    \State $\mat{G} \leftarrow \mat{V}^*\mat{D}^{-1}\mat{V}$
    \State $\mat{T} \leftarrow ((\mat{U}^*\mat{R})\mat{C}^{-1})\mat{C}^{-*} \cdot \diag \mleft( (\mat{H}^{-1})_{ii}^{-1/2} : i=1,2,\ldots,s \mright)$
    \For{$j = 1,\ldots,s$}
        \State $(\mat{W}_j,\sim) \leftarrow \mathtt{eig}(\mat{\Lambda} - \mat{T}(:,j)\mat{T}(:,j)^*)$ \Comment{Use $\order(s^2)$ algorithm}
        \State $\mat{F}_j \leftarrow \mat{W}_j(:,1:n_{\rm cen}) / \norm{\mat{W}_j(:,1:n_{\rm cen})^*\mat{G}\mat{W}_j(:,1:n_{\rm cen})}_{\rm F}^{1/2}$
    \EndFor
    \State $\mat{M} \leftarrow s^{-1} \sum_{j=1}^s \mat{F}_j^{\vphantom{*}}\mat{F}_j^*$
    \State $\mat{E} \leftarrow \mat{G}\mat{M}\mat{G}$
    \State $\Jack \leftarrow \mleft( s \tr(\mat{G}\mat{E}) - 2\sum_{j=1}^s  \tr(\mat{F}_j^*\mat{E}\mat{F}_j^{\vphantom{*}} + \sum_{j=1}^s \norm{\mat{F}_j^*\mat{G}\mat{F}_j^{\vphantom{*}}}_{\rm F}^2\mright)^{1/2}$
  \end{algorithmic}
\end{algorithm}

As this example demonstrates, devising efficient algorithms for matrix jackknife variance estimation can require some symbollic manipulations, the process is entirely \emph{systematic}.
First, use the update formula \cref{eq:nys_update} and, if necessary, use $\order(s^2)$ algorithms to solve the sequence of diagonal plus rank-one eigenproblems \cref{eq:rank_one_modification}.
After that, the hard work is done and an efficient procedure can often be derived by symbollic manipulation.

\subsection{Singular values and vectors from the randomized singular value decomposition} \label{sec:rsvd_extra}
Using a modification of the techniques from \cref{sec:spectral_nystrom}, we can derive efficient algorithms for jackknife variance estimation for various quantities derived from the randomized SVD such as singular values, singular vectors, projectors onto singular subspaces, and truncations of the matrix to smaller rank $r < s$.
We briefly illustrate by sketching an $\order(s^3)$ algorithm for estimating the variance of the largest singular value reported by the randomized SVD.

By the update formula \cref{eq:rsvd_update}, the replicates take the form
\begin{equation*}
    S^{(j)} = \sigma_{\rm max}\mleft( \mat{U}(\Id - \mat{W}^*\vec{t}_j^{\vphantom{*}}\vec{t}_j^*\mat{W})\mat{\Sigma}\mat{V}^* \mright) = \sigma_{\rm max}\mleft( (\Id - \mat{W}^*\vec{t}_j^{\vphantom{*}}\vec{t}_j^*\mat{W})\mat{\Sigma} \mright) \quad \text{for }j =1,\ldots,s.
\end{equation*}
Using the algorithm of \cite[\S5]{GE95}, an SVD of the rank-one modified diagonal matrix
\begin{equation} \label{eq:rank_one_rsvd}
    (\Id - \mat{W}^*\vec{t}_j^{\vphantom{*}}\vec{t}_j^*\mat{W})\mat{\Sigma} = \mat{U}_j^{\vphantom{*}}\mat{\Sigma}_j^{\vphantom{*}}\mat{V}_j^*
\end{equation}
can be computed in $\order(s^2)$ operations, which immediately yields a jackknife variance estimate of the largest singular value in $\order(s^3)$ operations.
Efficient jackknife variance procedures for singular vectors, projectors onto singular subspaces, etc.\ can be developed along similar lines.

\section{Failure of the Efron bootstrap variance estimate for the randomized SVD} \label{sec:bootstrap-failure}
The matrix jackknife produces an estimate of the standard deviation that is positively biased (\cref{thm:jackknife})---by up to an order of magnitude in our experiments.
It is natural to wonder: Could the bootstrap do better?
In this section, we demonstrate that a staightforward application of the bootstrap variance estimate \cite[\S5.1]{Efr82} to the Halko--Martinsson--Tropp randomized SVD can go badly wrong.

To focus the discussion, consider the task of estimating the standard deviation of the largest singular value $S \coloneqq \sigma_{\rm max}(\mat{X})$ estimated from the randomized SVD approximation $\mat{X} = \mat{X}(\mat{\Omega})$ (with no subspace iteration, $q=0$).
A straightforward application of the classical bootstrap variance estimate of Efron \cite[\S5.1]{Efr82} would proceed as follows:
Fix a number of bootstrap iterations $B \ge 1$.
For $j = 1,\ldots,B$, do the following:
\begin{enumerate}
    \item Sample $\vec{\omega}_{1\star},\ldots,\vec{\omega}_{s\star}$ uniformly and \emph{with replacement} from $\{ \vec{\omega}_1,\ldots, \vec{\omega}_s \}$.
    \item Compute the randomized SVD $\mat{X}^{(j)} = \mat{X}(\begin{bmatrix} \vec{\omega}_{1\star} & \cdots & \vec{\omega}_{s\star} \end{bmatrix})$ using the bootstrap sample and define the \emph{bootstrap replicate} $S^{(j)} \coloneqq \sigma_{\rm max}(\mat{X}^{(j)})$.
\end{enumerate}
Finally, we define the Efron bootstrap variance estimate as
\begin{equation*}
    \mathrm{Boot}^2(S) \coloneqq \frac{1}{B-1} \sum_{j=1}^B \left( S^{(j)} - S^{(\cdot)} \right)^2 \quad \text{where} \quad S^{(\cdot)} = \frac{1}{B} \sum_{j=1}^B S^{(j)}.
\end{equation*}

The Efron bootstrap variance estimate has serious deficiencies for the randomized SVD algorithm. 
The issue is that the output of randomized SVD does not depend on the number of times a column $\vec{\omega}_{j\star}$ occurs repeated in the input matrix.
On average, a bootstrap sample $\{\vec{\omega}_{1\star},\ldots,\vec{\omega}_{s\star}\}$ contains only $(1-1/\mathrm{e})s$ unique vectors and thus $\mat{X}^{(j)}$ will typically be a rank-$(1-1/\mathrm{e})s$ approximation.
Since the matrix bootstrap replicates $\mat{X}^{(j)}$ have much smaller rank than $\mat{X}$, their variability often grossly overestimates the variance of $\mat{X}$.

To demonstrate this, consider the diagonal matrix
\begin{equation*}
    \mat{A} = \diag \left( 1,0.99,0.98,\ldots,0.26,\frac{0.25}{1^2}, \frac{0.25}{2^2}, \frac{0.25}{3^2},\ldots,\frac{0.25}{925^2}\right) \in \real^{1000\times 1000}.
\end{equation*}
The first 75 singular values of $\mat{A}$ decay quite slowly, after which they decay at a polynomial rate.
We consider the estimate of the largest singular value $S = \sigma_{\rm max}(\mat{X})$ produced by the randomized SVD with approximation rank $s = 100$.
We use $B = 1000$ bootstrap iterations and use $1000$ trials to estimate the true standard deviation $\sqrt{\Var(S)}$ of $S$.
For this example, the jackknife and bootstrap estimates and true standard deviation were as follows:
\begin{equation*}
    \sqrt{\Var(S))} = 8.2\times 10^{-8}, \quad \Jack(S) = 3.2\times 10^{-7}, \quad \mathrm{Boot}(S) = 5.0\times 10^{-3}.
\end{equation*}
The jackknife overestimates the standard deviation by a factor of 3.9; the bootstrap overestimates the standard deviation by a factor of 60,000.

Existing applications of bootstrap methodology to randomized matrix computations has focused on Monte Carlo-type matrix estimators, a setting in which the bootstrap is more natural.
Unfortunately, these Monte Carlo estimators have poor accuracy compared to the randomized SVD.
Based on the failure mode presented here, we believe it is unlikely that a generally effective and computationally efficient (i.e., requiring no additional matrix--vector products with $\mat{A}$) variance estimate based on bootstrap resampling can be developed for the randomized SVD.

\section{Additional numerical experiments} \label{sec:addit-numer-exper}
In addition to the exponential decay \cref{eq:exp_decay} and noisy low-rank \cref{eq:low_rank_plus_noise} examples, we consider a third synthetic matrix example with polynomial decay:
\begin{equation}
    \mat{A} = \diag( \underbrace{1,\ldots,1}_{R \textrm{ times}},2^{-p},3^{-p},\ldots,(d-R+1)^{-p}), \tag{PolyDecay} \label{eq:poly_decay}
\end{equation}
where $p \in \real$ is a real parameter.
We again set $d \coloneqq 1000$.

We also consider more application matrices:
\begin{itemize}
    \item \textbf{Kernel.} The QM9 kernel matrix from \cref{sec:loo_intro}.
    \item \textbf{Inverse-Poisson.} The inverse of the tridiagonal Poisson matrix corresponding to a discretization of the ODE boundary value problem $-u'' = f$, $u(0) = u(1) = 0$ with a three-point centered difference scheme.
    \item \textbf{Network.} The matrix exponential $\mat{A} = \exp(\mat{M})$ of the adjacency matrix $\mat{M}$ of the \texttt{yeast} network \cite{BZC+03} from the SuiteSparse matrix collection \cite{DH11}.
    The diagonal entries of this matrix are known as \emph{subgraph centralities}, which can be estimated with the help of low-rank approximations \cite[\S4.4]{ETW24}.
\end{itemize}

\begin{figure}[t]
  \centering
  \begin{subfigure}{0.32\textwidth}
      \includegraphics[width=0.99\textwidth]{figures/loo_randsvd_expfast.png}
      \caption*{\ref{eq:exp_decay}$(q \!=\! 0.1,R\!=\!5)$}
  \end{subfigure}
  \begin{subfigure}{0.32\textwidth}
      \includegraphics[width=0.99\textwidth]{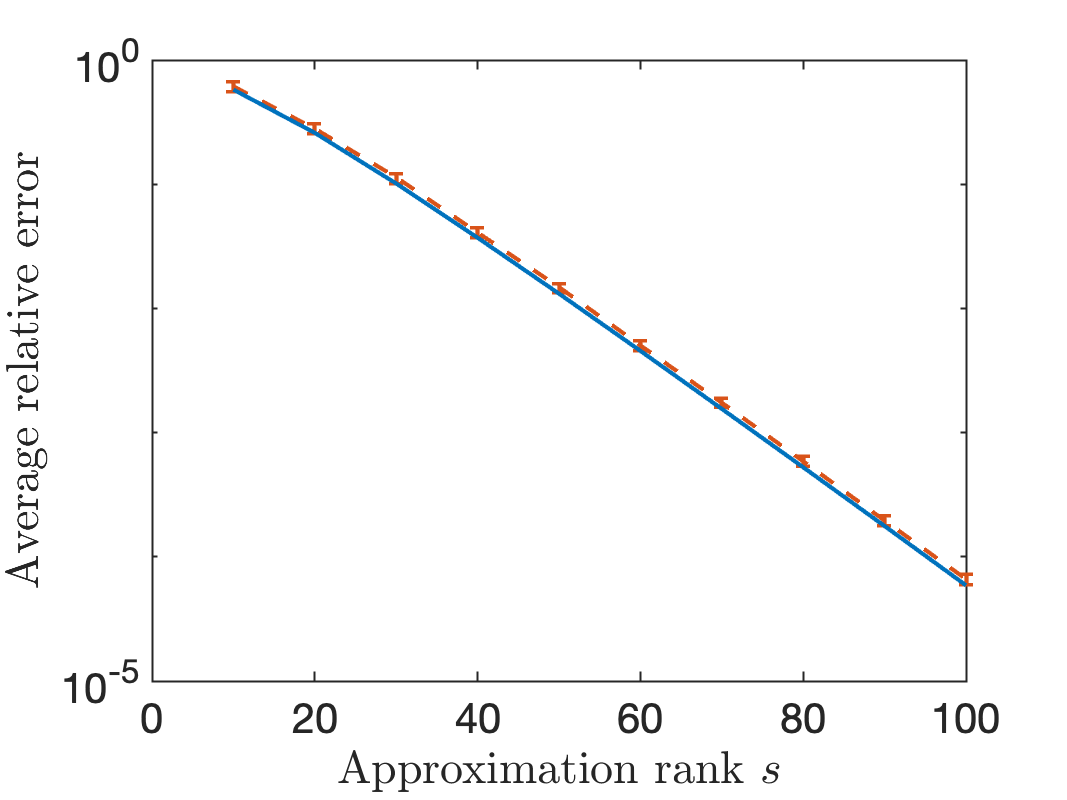}
      \caption*{\ref{eq:poly_decay}$(p \!=\! 2,R\!=\!5)$}
  \end{subfigure}
  \begin{subfigure}{0.32\textwidth}
      \includegraphics[width=0.99\textwidth]{figures/loo_randsvd_nlrfast.png}
      \caption*{\ref{eq:low_rank_plus_noise}$(\xi \!=\! 10^{-4},R\!=\!5)$}
  \end{subfigure}
  
  \begin{subfigure}{0.32\textwidth}
      \includegraphics[width=0.99\textwidth]{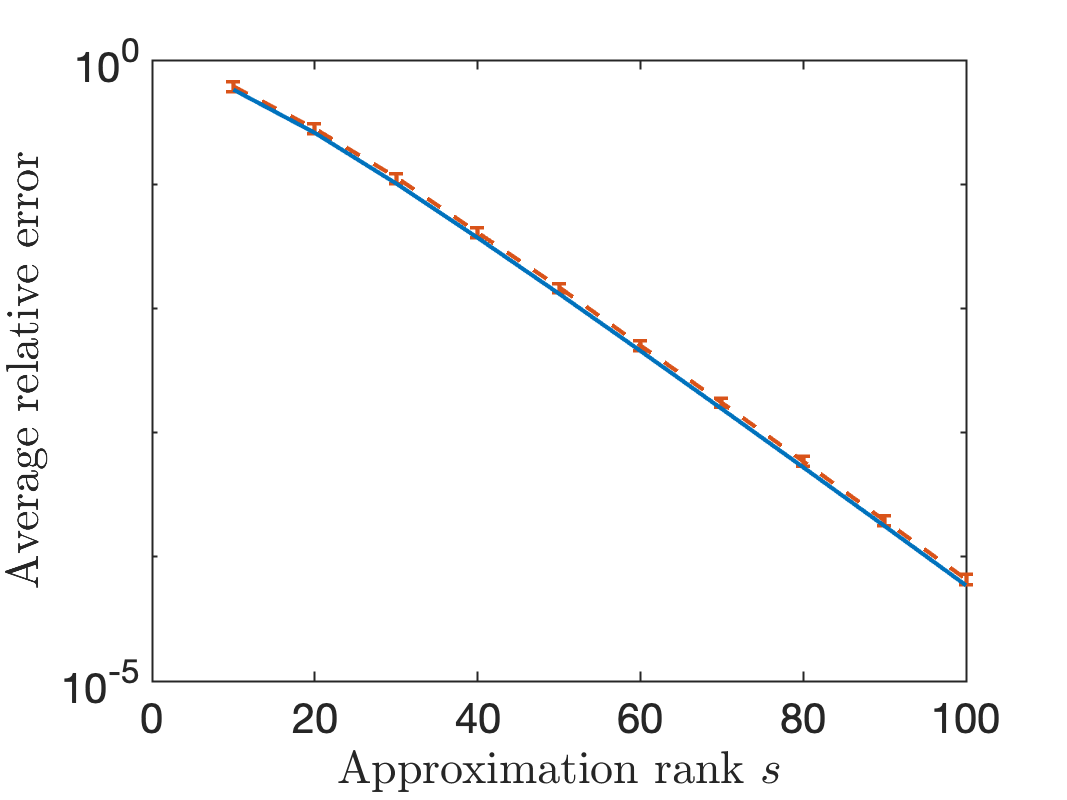}
      \caption*{\ref{eq:exp_decay}$(q \!=\! 0.05,R\!=\!5)$}
  \end{subfigure}
  \begin{subfigure}{0.32\textwidth}
      \includegraphics[width=0.99\textwidth]{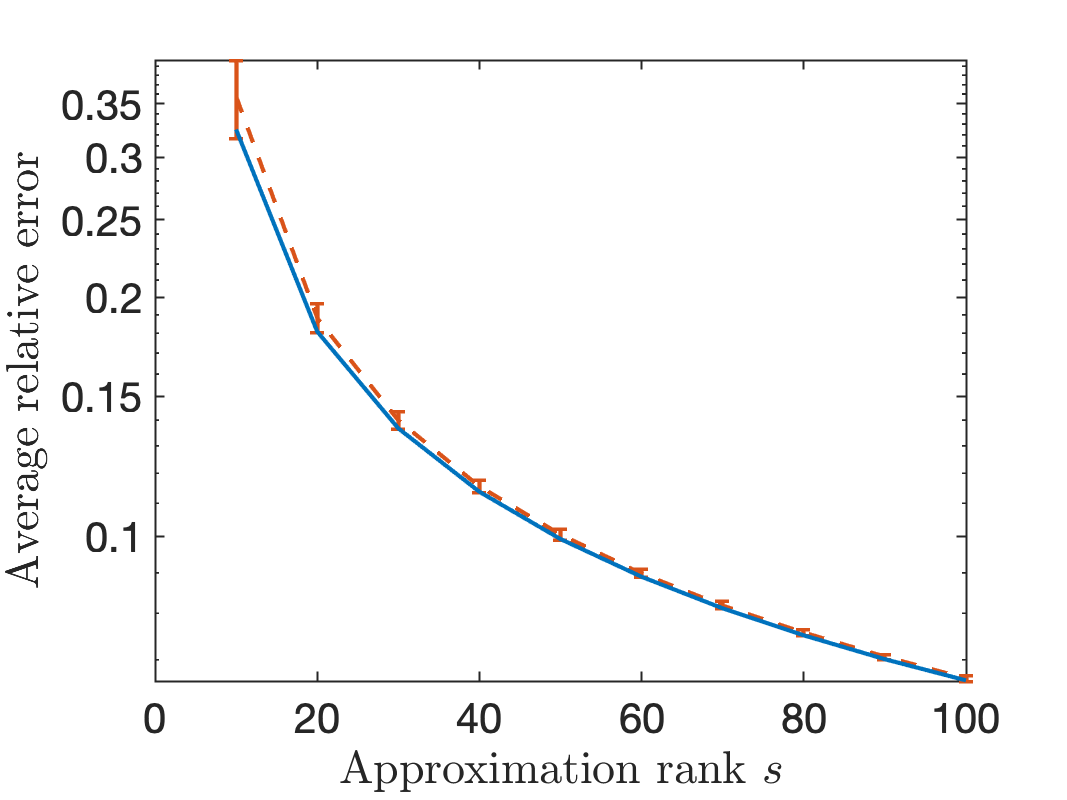}
      \caption*{\ref{eq:poly_decay}$(p \!=\! 1,R\!=\!5)$}
  \end{subfigure}
  \begin{subfigure}{0.32\textwidth}
      \includegraphics[width=0.99\textwidth]{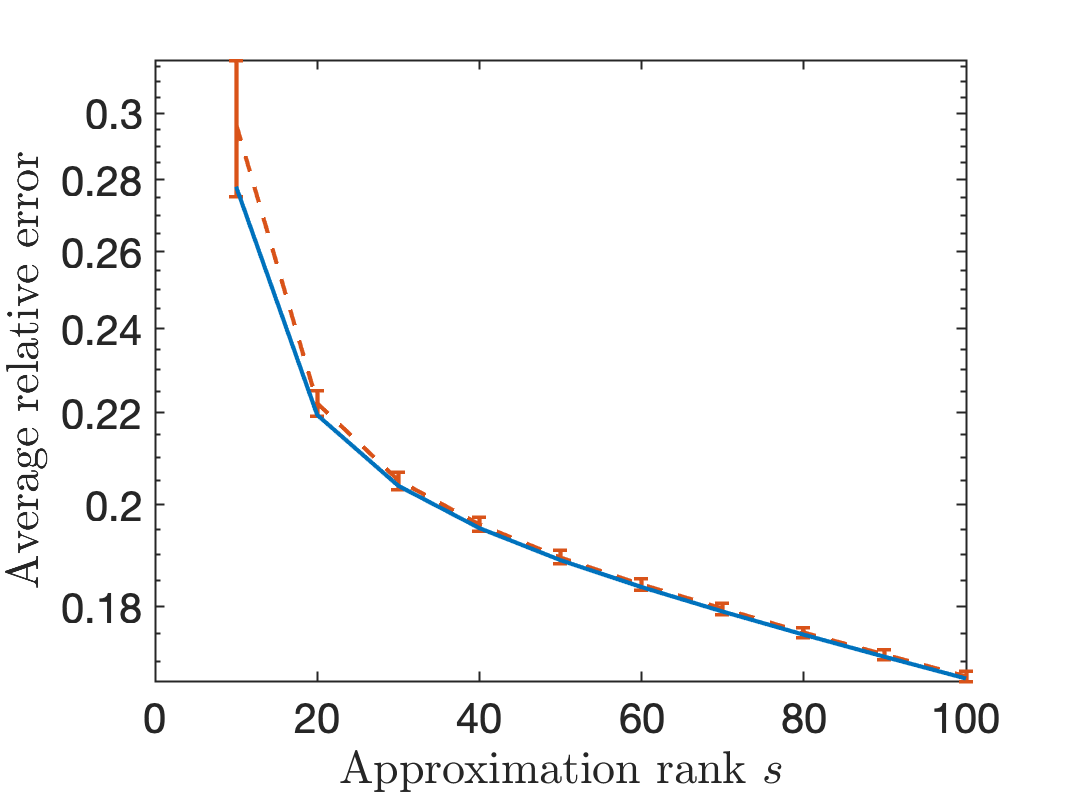}
      \caption*{\ref{eq:low_rank_plus_noise}$(\xi \!=\! 10^{-2},R\!=\!5)$}
  \end{subfigure}
  
  \begin{subfigure}{0.32\textwidth}
      \includegraphics[width=0.99\textwidth]{figures/loo_randsvd_vel.png}
      \caption*{Velocity}
  \end{subfigure}
  \begin{subfigure}{0.32\textwidth}
      \includegraphics[width=0.99\textwidth]{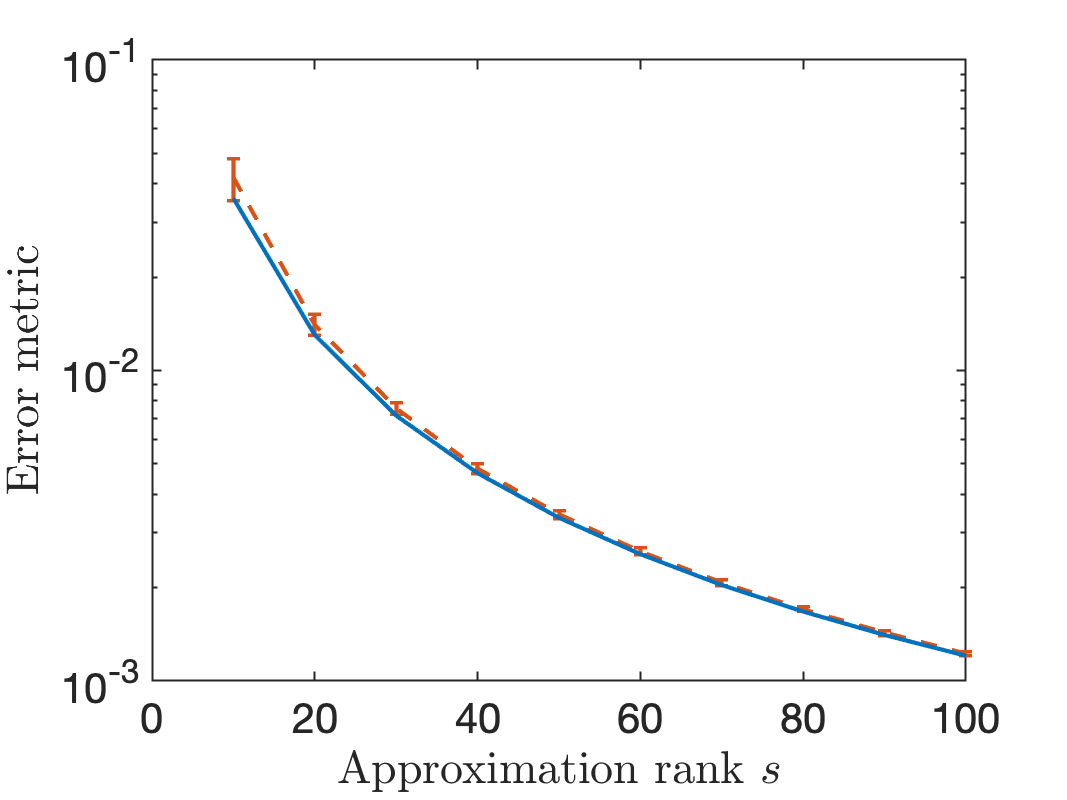}
      \caption*{Inverse-Poisson}
  \end{subfigure}
  \begin{subfigure}{0.32\textwidth}
      \includegraphics[width=0.99\textwidth]{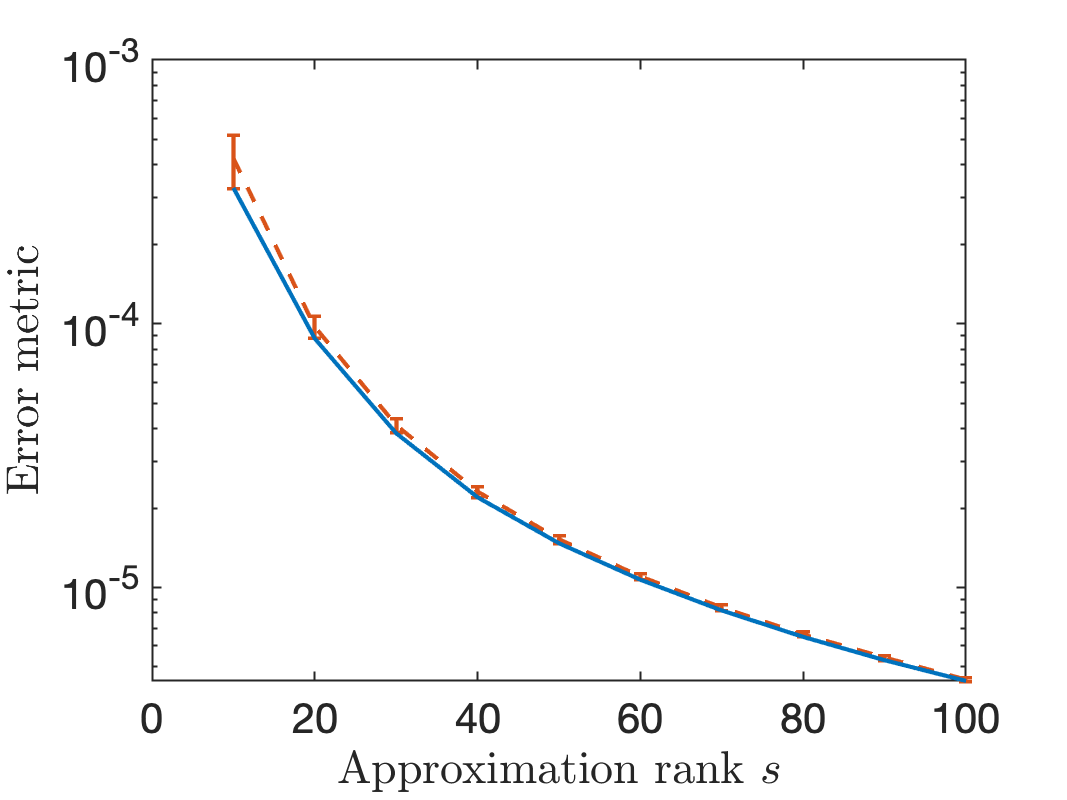}
      \caption*{Network}
  \end{subfigure}
  
  \mycaption{Leave-one-out error estimator for randomized SVD}{Error and error estimate for randomized SVD ($q=0$) approximation. Includes more test matrices than \cref{fig:rsvd_loo}.}
  \label{fig:rsvd_loo_extra}
\end{figure}

\begin{figure}[t]
  \centering
  \begin{subfigure}{0.32\textwidth}
      \includegraphics[width=0.99\textwidth]{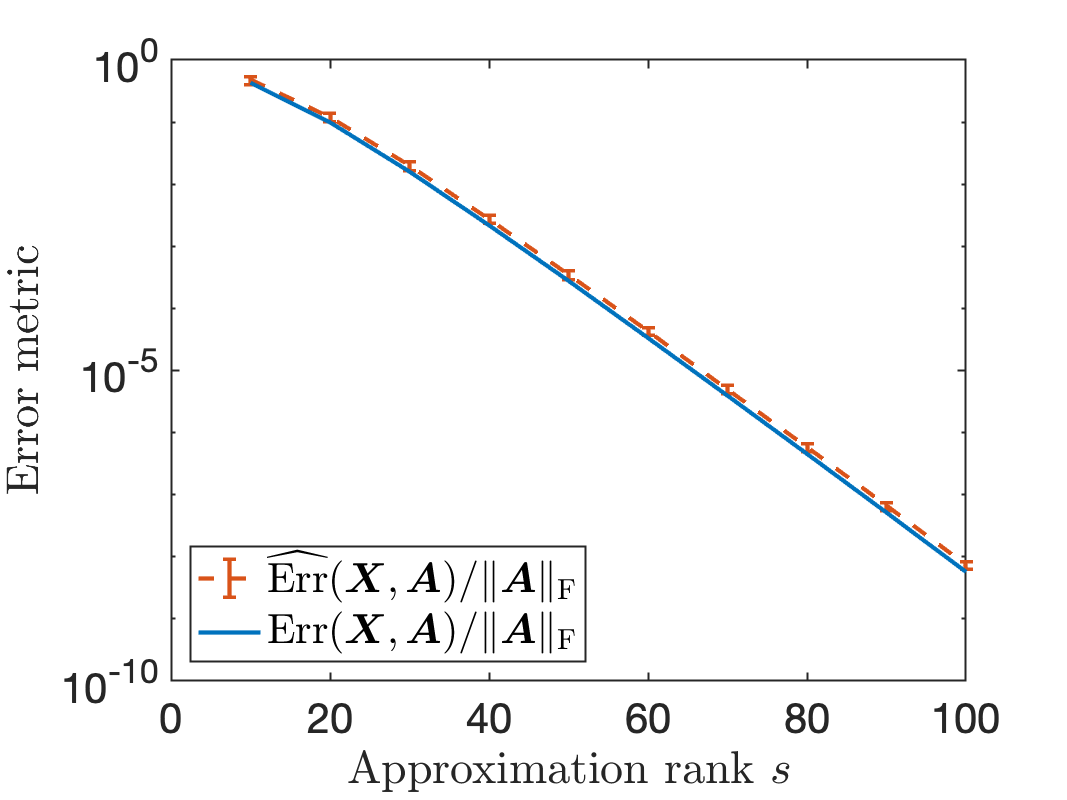}
      \caption*{\ref{eq:exp_decay}$(q \!=\! 0.1,R\!=\!5)$}
  \end{subfigure}
  \begin{subfigure}{0.32\textwidth}
      \includegraphics[width=0.99\textwidth]{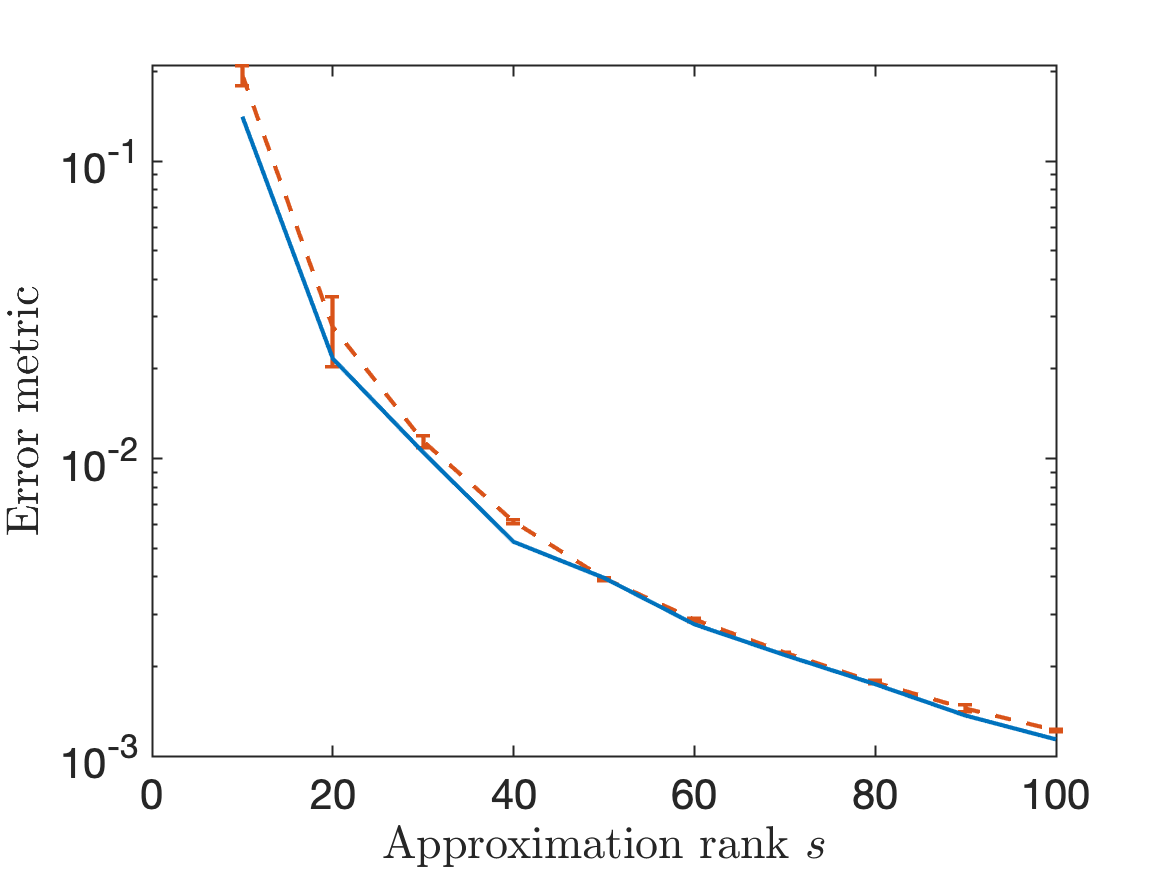}
      \caption*{\ref{eq:poly_decay}$(p \!=\! 2,R\!=\!5)$}
  \end{subfigure}
  \begin{subfigure}{0.32\textwidth}
      \includegraphics[width=0.99\textwidth]{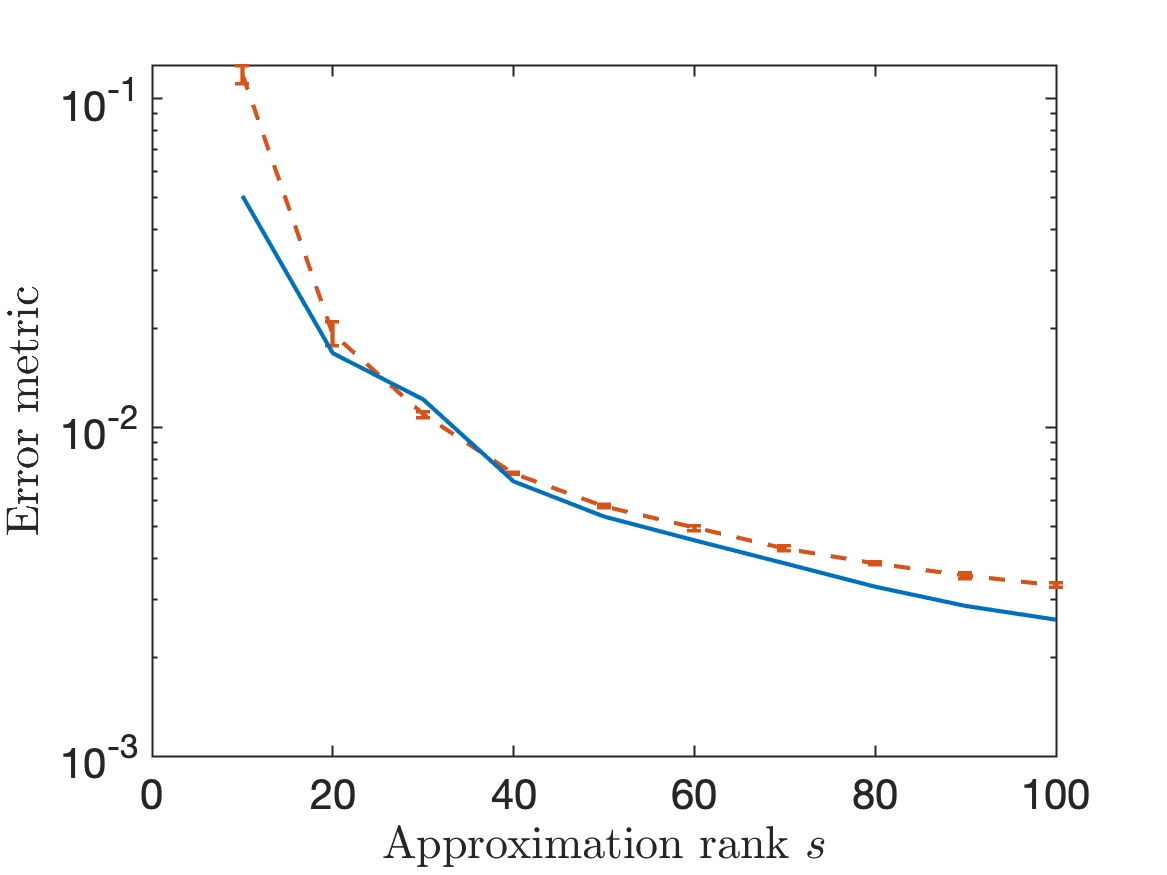}
      \caption*{\ref{eq:low_rank_plus_noise}$(\xi \!=\! 10^{-4},R\!=\!5)$}
  \end{subfigure}
  
  \begin{subfigure}{0.32\textwidth}
      \includegraphics[width=0.99\textwidth]{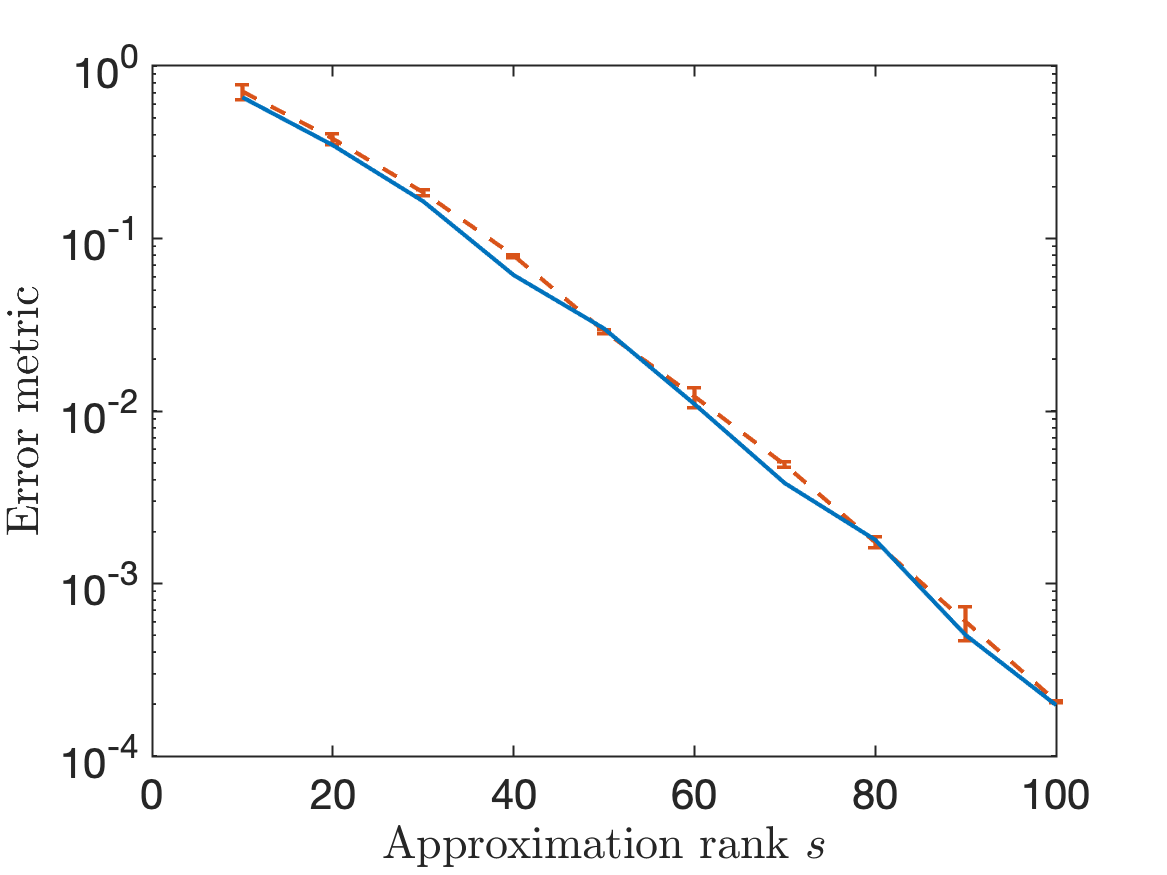}
      \caption*{\ref{eq:exp_decay}$(q \!=\! 0.05,R\!=\!5)$}
  \end{subfigure}
  \begin{subfigure}{0.32\textwidth}
      \includegraphics[width=0.99\textwidth]{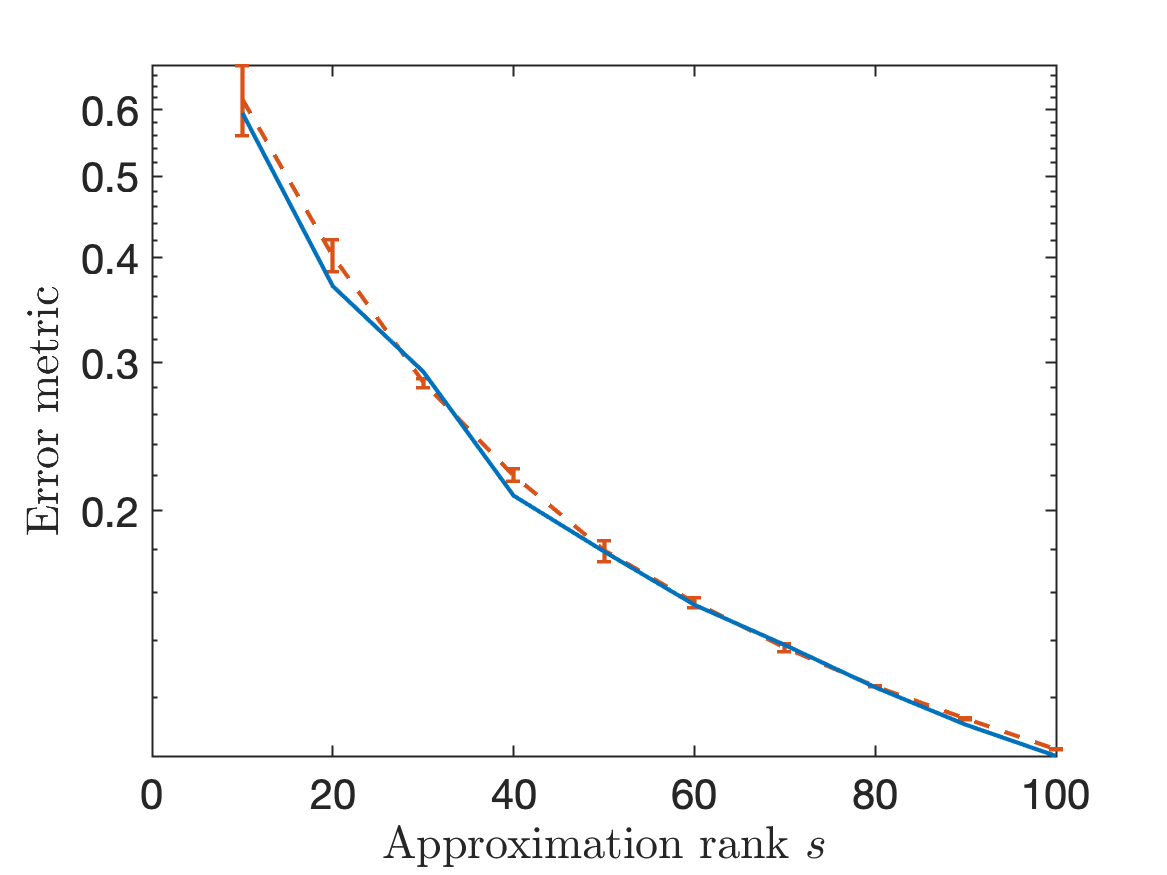}
      \caption*{\ref{eq:poly_decay}$(p \!=\! 1,R\!=\!5)$}
  \end{subfigure}
  \begin{subfigure}{0.32\textwidth}
      \includegraphics[width=0.99\textwidth]{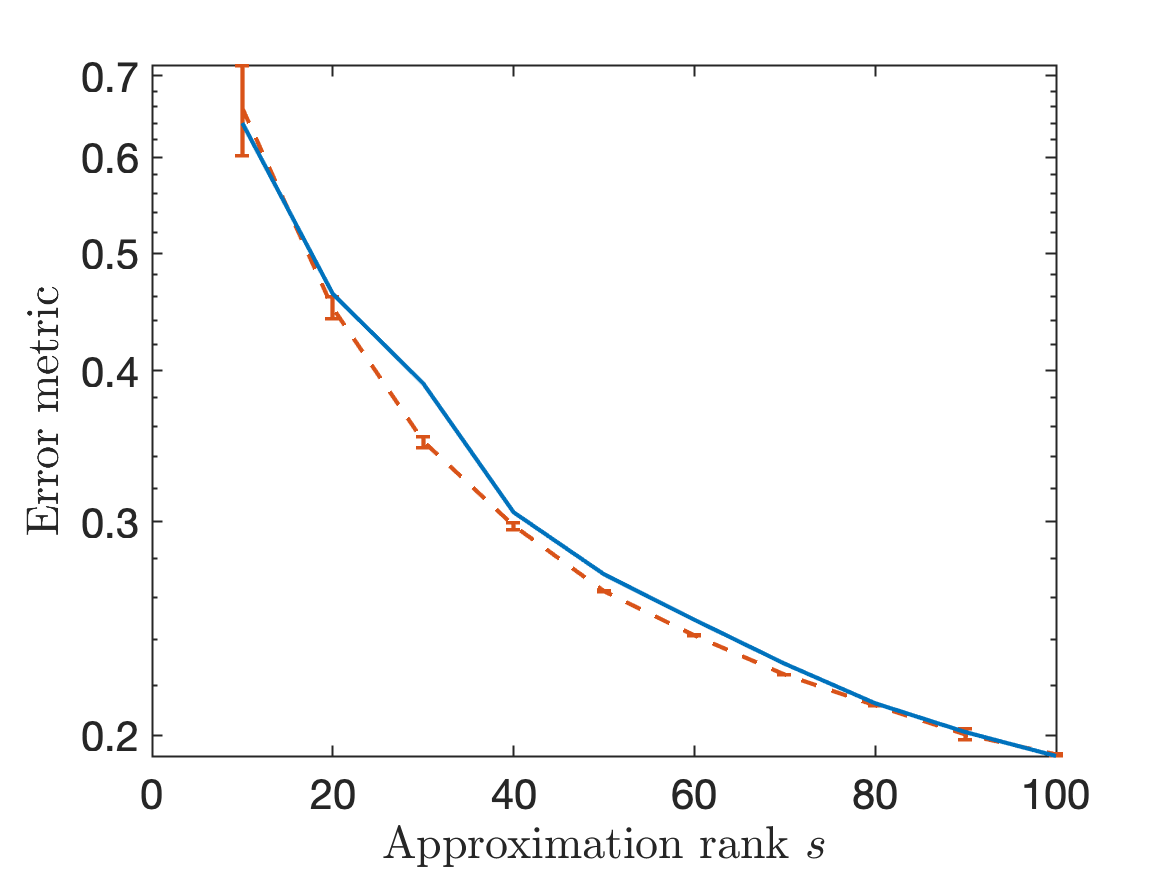}
      \caption*{\ref{eq:low_rank_plus_noise}$(\xi \!=\! 10^{-2},R\!=\!5)$}
  \end{subfigure}
  
  \begin{subfigure}{0.32\textwidth}
      \includegraphics[width=0.99\textwidth]{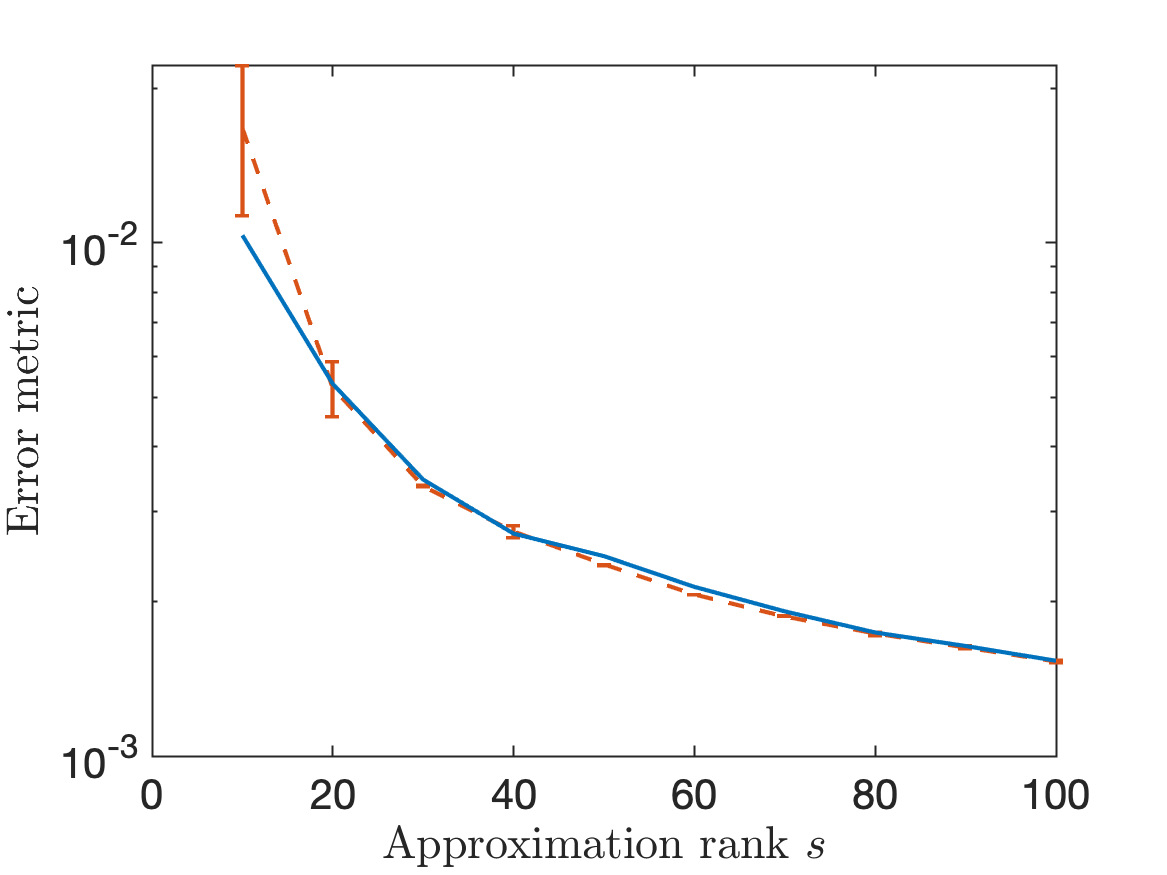}
      \caption*{Kernel}
  \end{subfigure}
  \begin{subfigure}{0.32\textwidth}
      \includegraphics[width=0.99\textwidth]{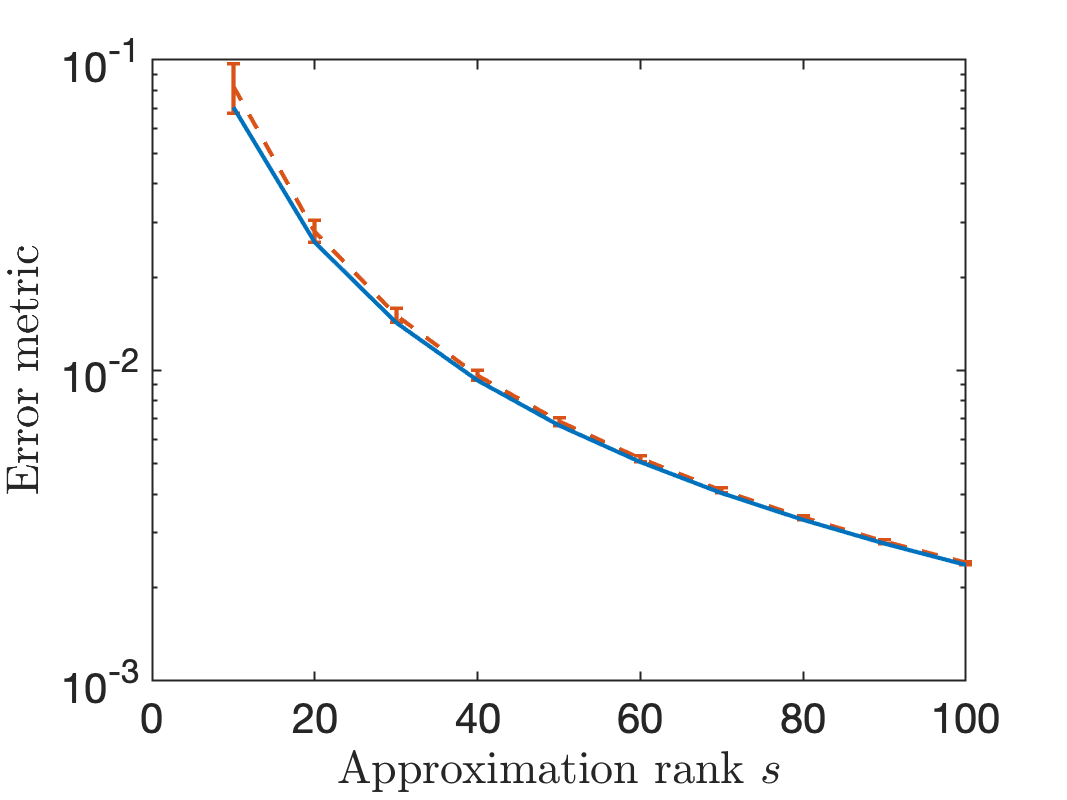}
      \caption*{Inverse-Poisson}
  \end{subfigure}
  \begin{subfigure}{0.32\textwidth}
      \includegraphics[width=0.99\textwidth]{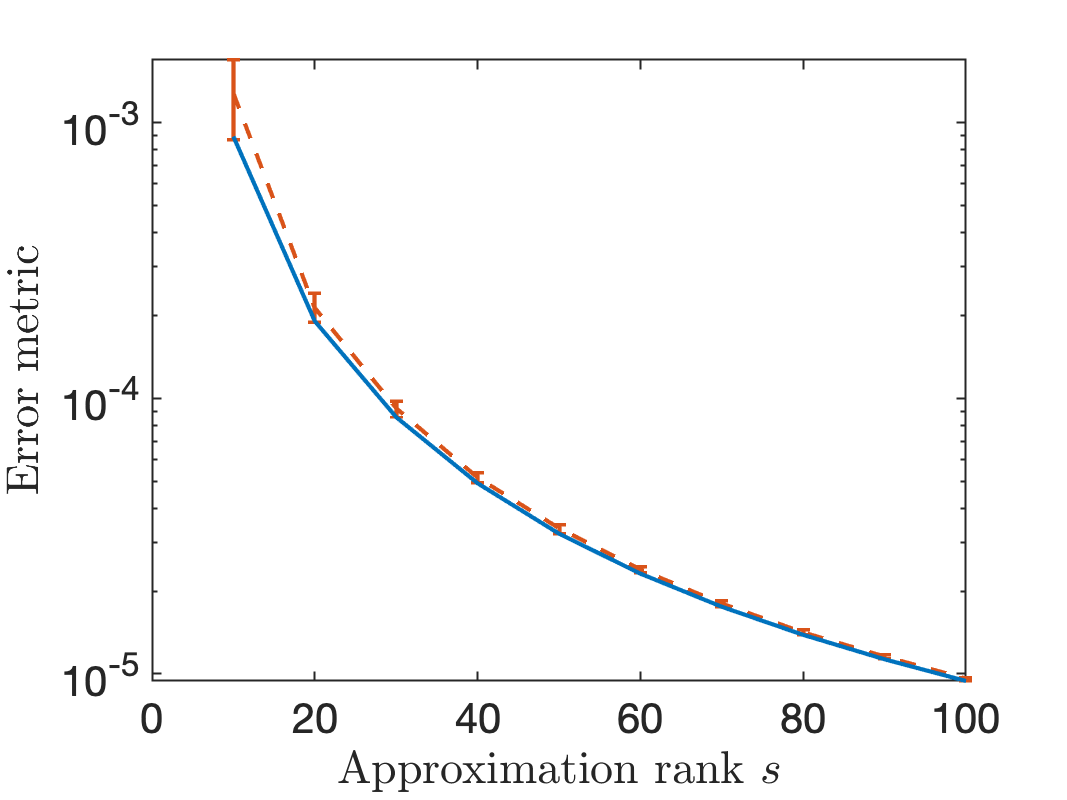}
      \caption*{Network}
  \end{subfigure}
  
  \mycaption{Leave-one-out error estimator for randomized Nystr\"om}{Error and error estimate for randomized SVD ($q=0$) approximation. Includes more test matrices than \cref{fig:rsvd_loo}.}
  \label{fig:nystrom_loo}
\end{figure}

\begin{figure}[t]
  \centering
  \begin{subfigure}{0.32\textwidth}
      \includegraphics[width=0.99\textwidth]{figures/jack_randsvd_expfast.png}
      \caption*{\ref{eq:exp_decay}$(q \!=\! 0.1,R\!=\!5)$}
  \end{subfigure}
  \begin{subfigure}{0.32\textwidth}
      \includegraphics[width=0.99\textwidth]{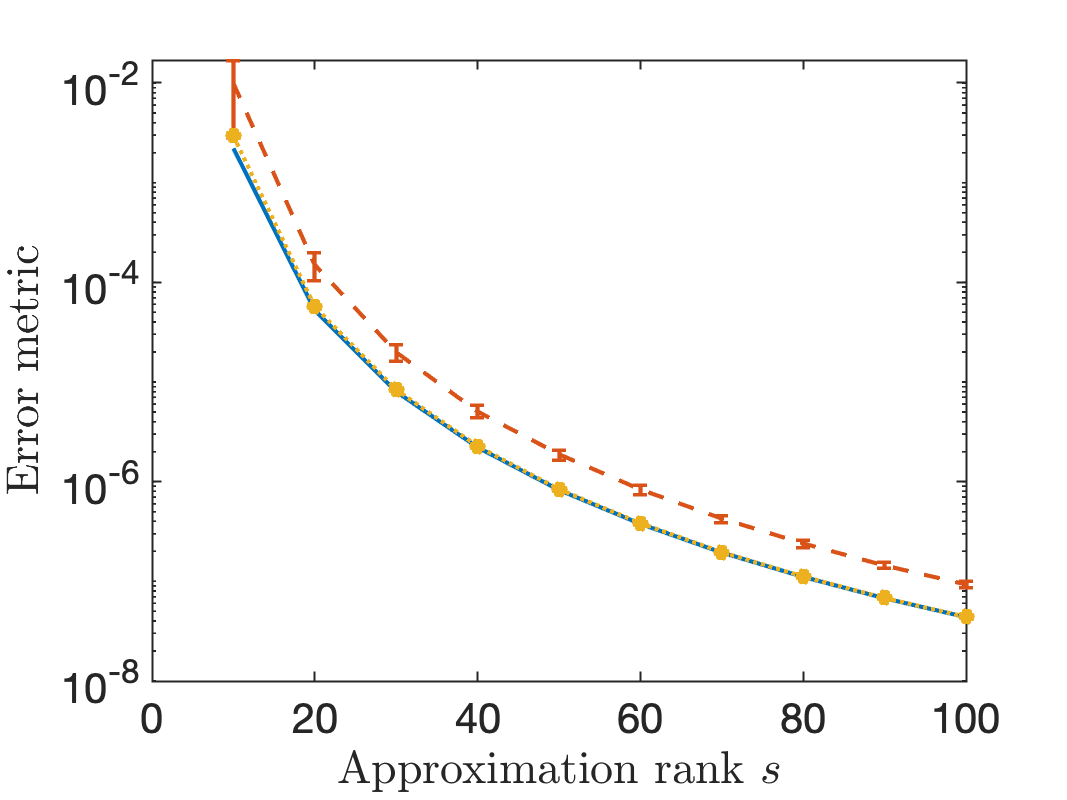}
      \caption*{\ref{eq:poly_decay}$(p \!=\! 2,R\!=\!5)$}
  \end{subfigure}
  \begin{subfigure}{0.32\textwidth}
      \includegraphics[width=0.99\textwidth]{figures/jack_randsvd_nlrfast.png}
      \caption*{\ref{eq:low_rank_plus_noise}$(\xi \!=\! 10^{-4},R\!=\!5)$}
  \end{subfigure}
  
  \begin{subfigure}{0.32\textwidth}
      \includegraphics[width=0.99\textwidth]{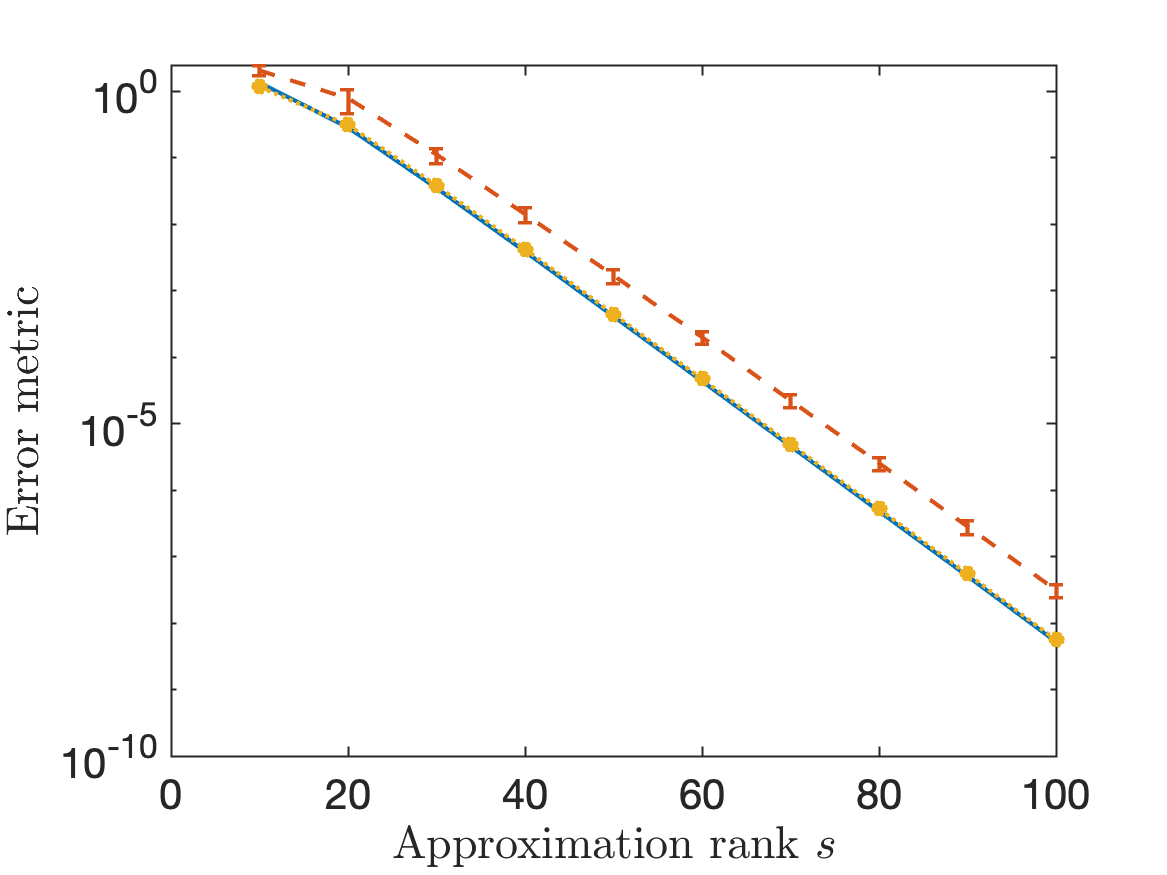}
      \caption*{\ref{eq:exp_decay}$(q \!=\! 0.05,R\!=\!5)$}
  \end{subfigure}
  \begin{subfigure}{0.32\textwidth}
      \includegraphics[width=0.99\textwidth]{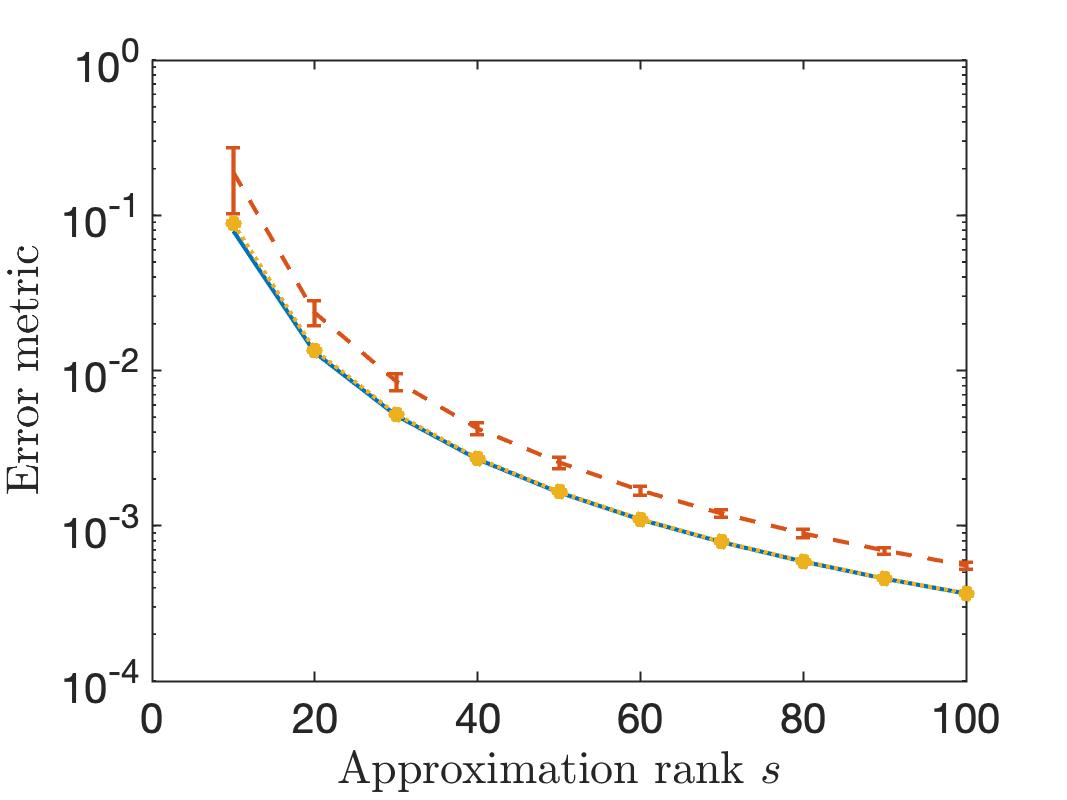}
      \caption*{\ref{eq:poly_decay}$(p \!=\! 1,R\!=\!5)$}
  \end{subfigure}
  \begin{subfigure}{0.32\textwidth}
      \includegraphics[width=0.99\textwidth]{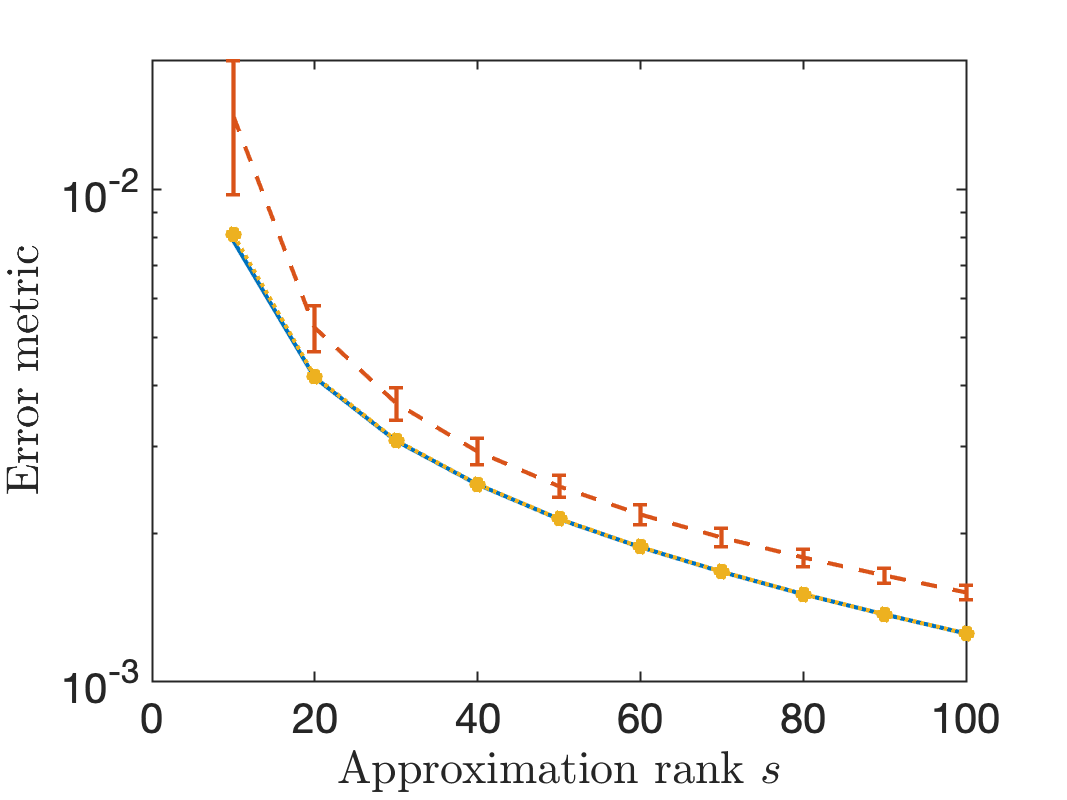}
      \caption*{\ref{eq:low_rank_plus_noise}$(\xi \!=\! 10^{-2},R\!=\!5)$}
  \end{subfigure}
  
  \begin{subfigure}{0.32\textwidth}
      \includegraphics[width=0.99\textwidth]{figures/jack_randsvd_vel.png}
      \caption*{Velocity}
  \end{subfigure}
  \begin{subfigure}{0.32\textwidth}
      \includegraphics[width=0.99\textwidth]{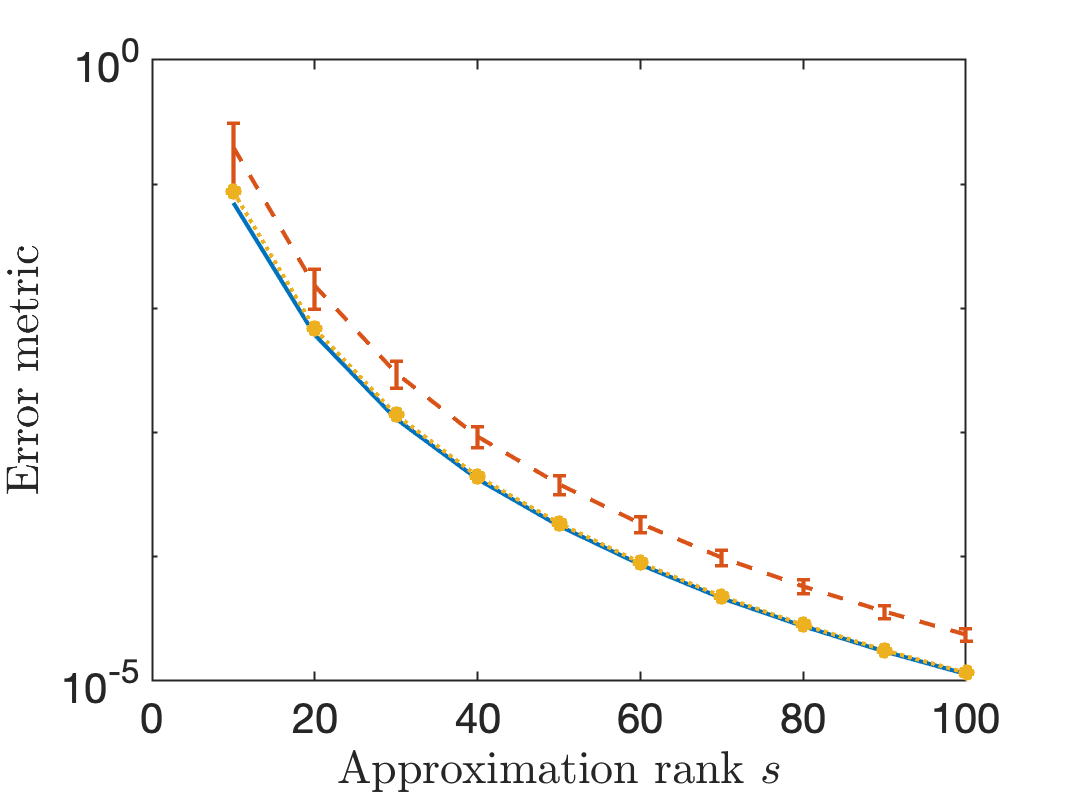}
      \caption*{Inverse-Poisson}
  \end{subfigure}
  \begin{subfigure}{0.32\textwidth}
      \includegraphics[width=0.99\textwidth]{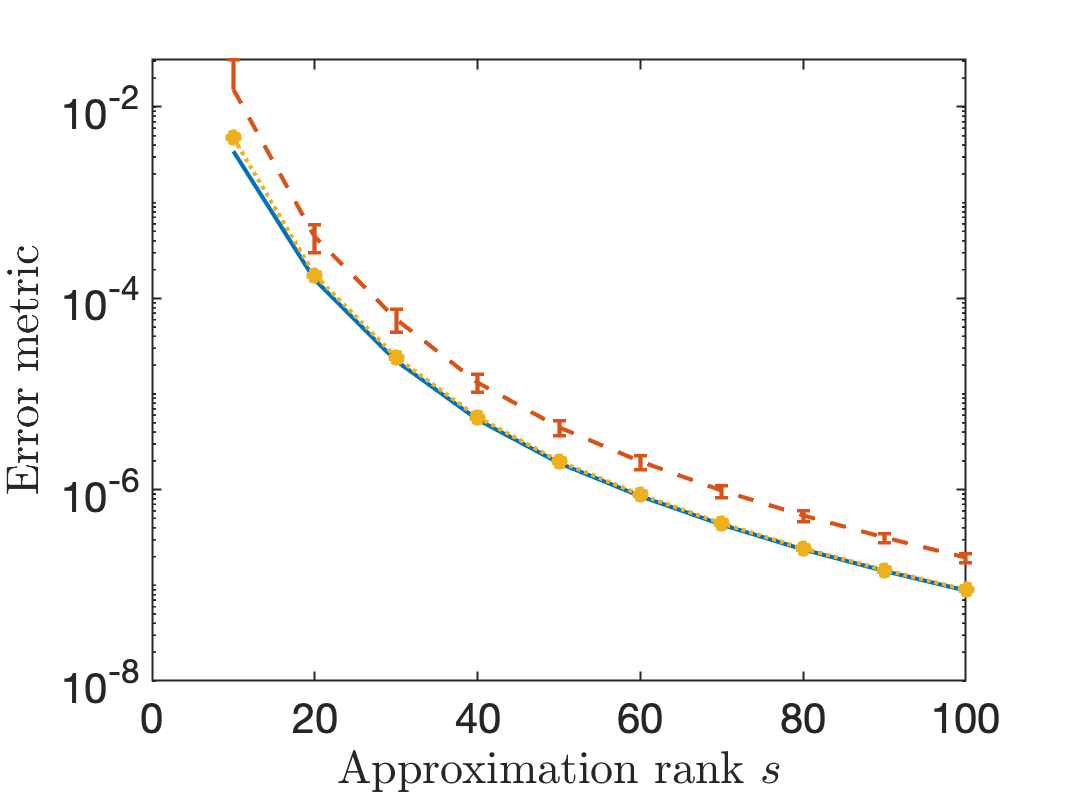}
      \caption*{Network}
  \end{subfigure}
  
  \mycaption{Matrix jackknife for projectors onto singular subspaces}{Error, standard deviation, and jackknife estimate for randomized SVD ($q=0$) approximation $\mat{X}$ \cref{eq:singular_projector} to the projector $\mat{\Pi}$ onto the span of the five dominant right singular vectors.
  Includes more test matrices than \cref{fig:rsvd_jack}.}
  \label{fig:rsvd_jack_extra}
\end{figure}

\begin{figure}[t]
  \centering
  \begin{subfigure}{0.32\textwidth}
      \includegraphics[width=0.99\textwidth]{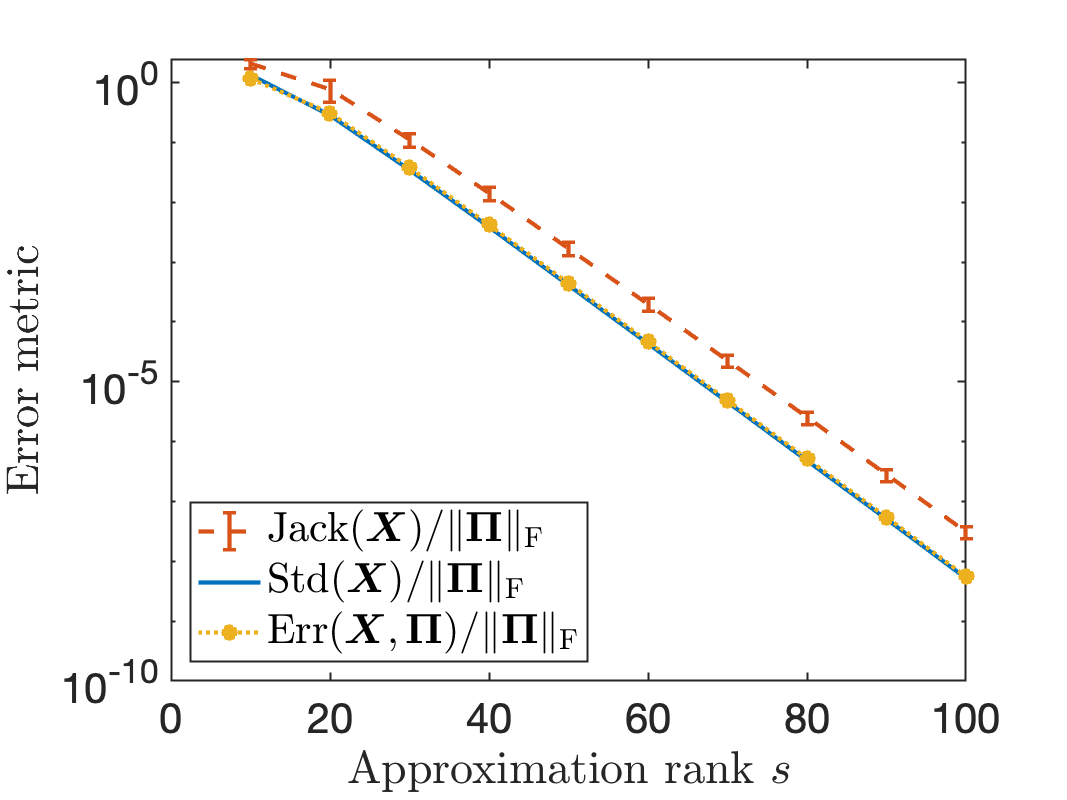}
      \caption*{\ref{eq:exp_decay}$(q \!=\! 0.1,R\!=\!5)$}
  \end{subfigure}
  \begin{subfigure}{0.32\textwidth}
      \includegraphics[width=0.99\textwidth]{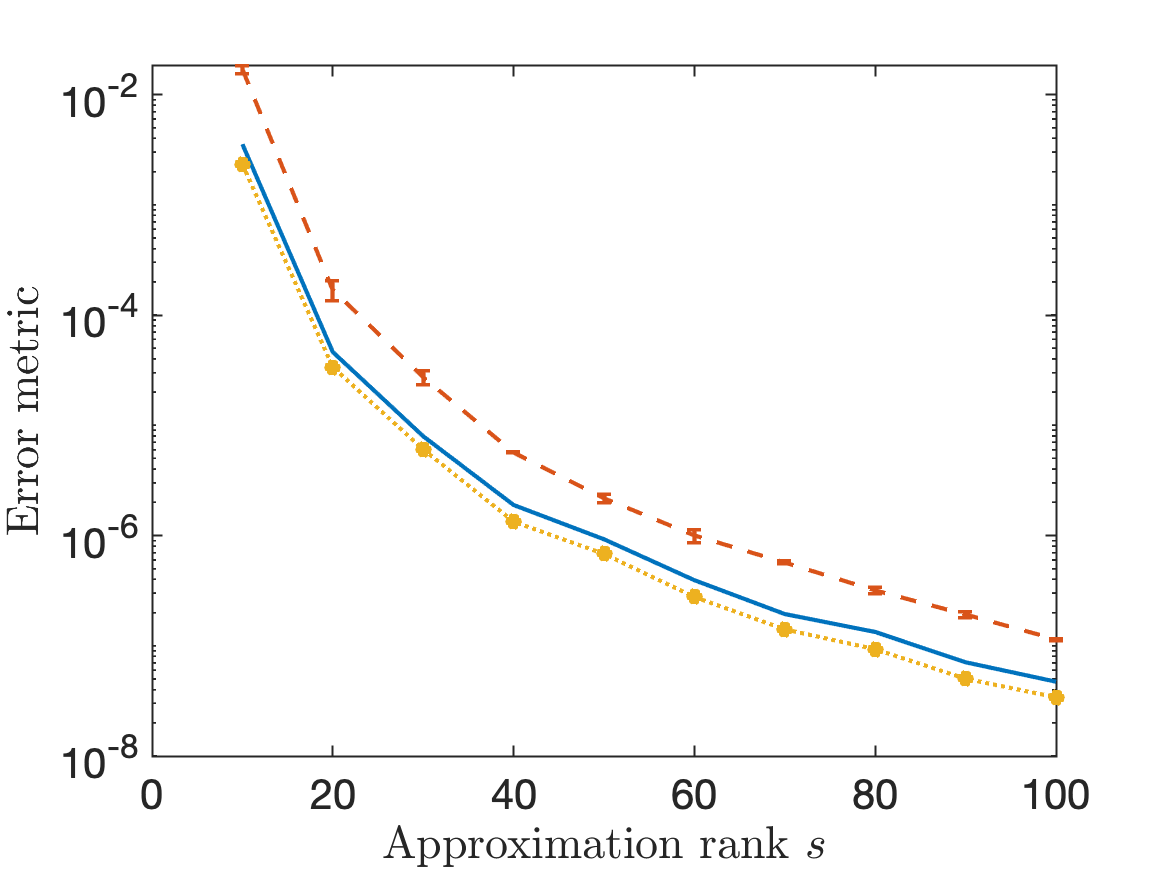}
      \caption*{\ref{eq:poly_decay}$(p \!=\! 2,R\!=\!5)$}
  \end{subfigure}
  \begin{subfigure}{0.32\textwidth}
      \includegraphics[width=0.99\textwidth]{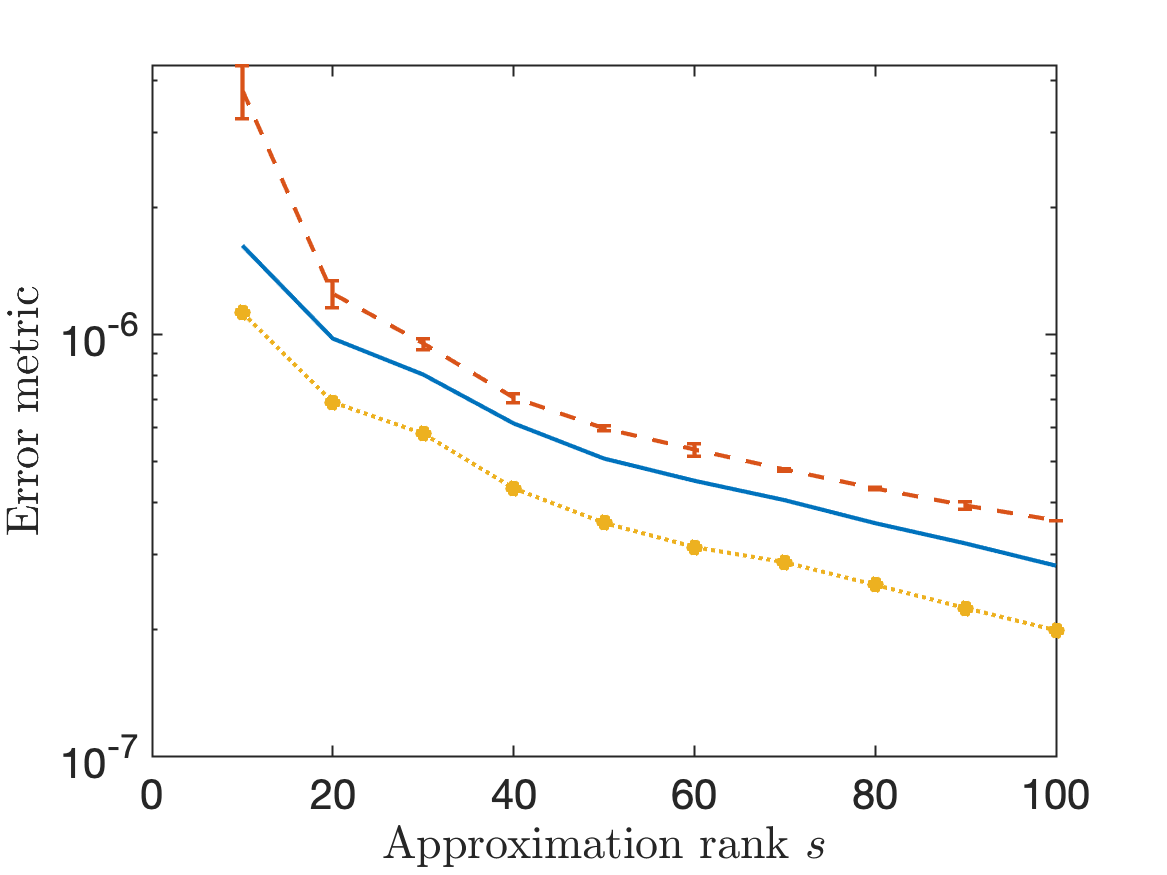}
      \caption*{\ref{eq:low_rank_plus_noise}$(\xi \!=\! 10^{-4},R\!=\!5)$}
  \end{subfigure}
  
  \begin{subfigure}{0.32\textwidth}
      \includegraphics[width=0.99\textwidth]{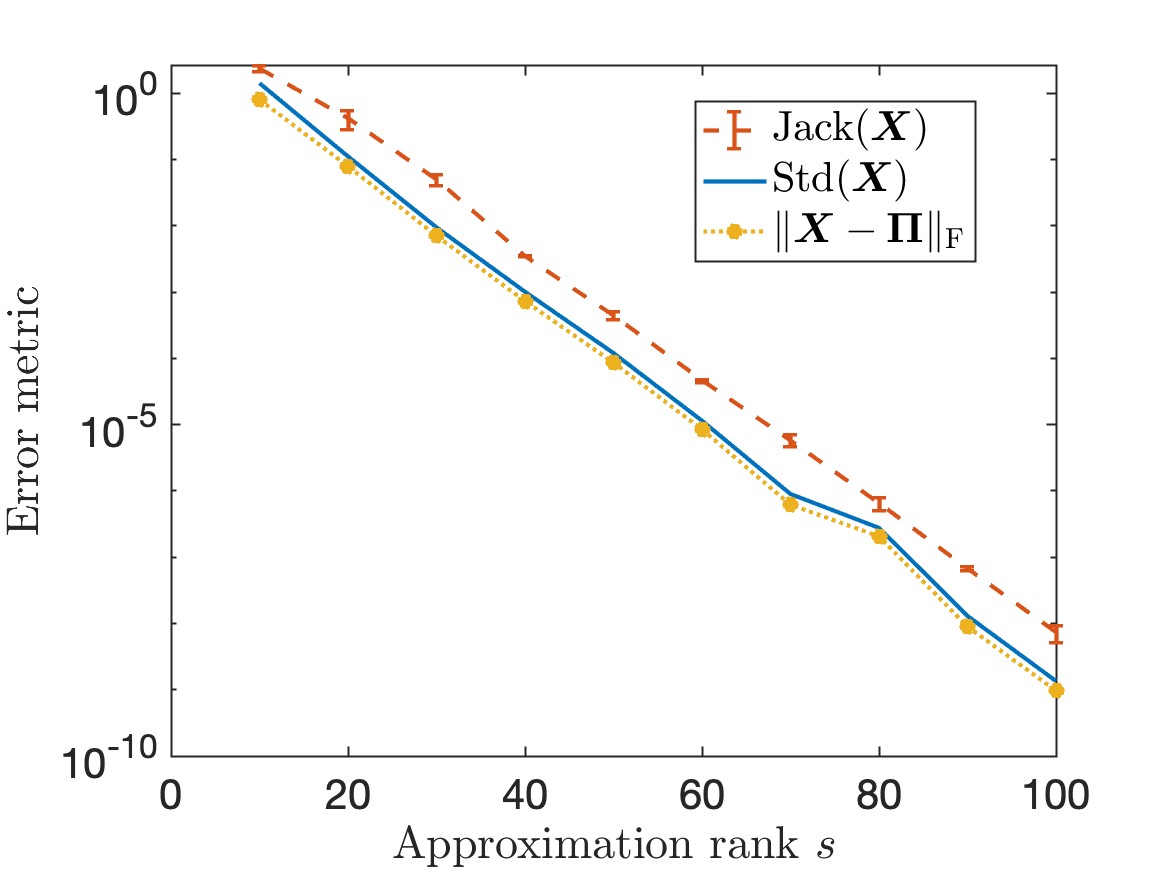}
      \caption*{\ref{eq:exp_decay}$(q \!=\! 0.05,R\!=\!5)$}
  \end{subfigure}
  \begin{subfigure}{0.32\textwidth}
      \includegraphics[width=0.99\textwidth]{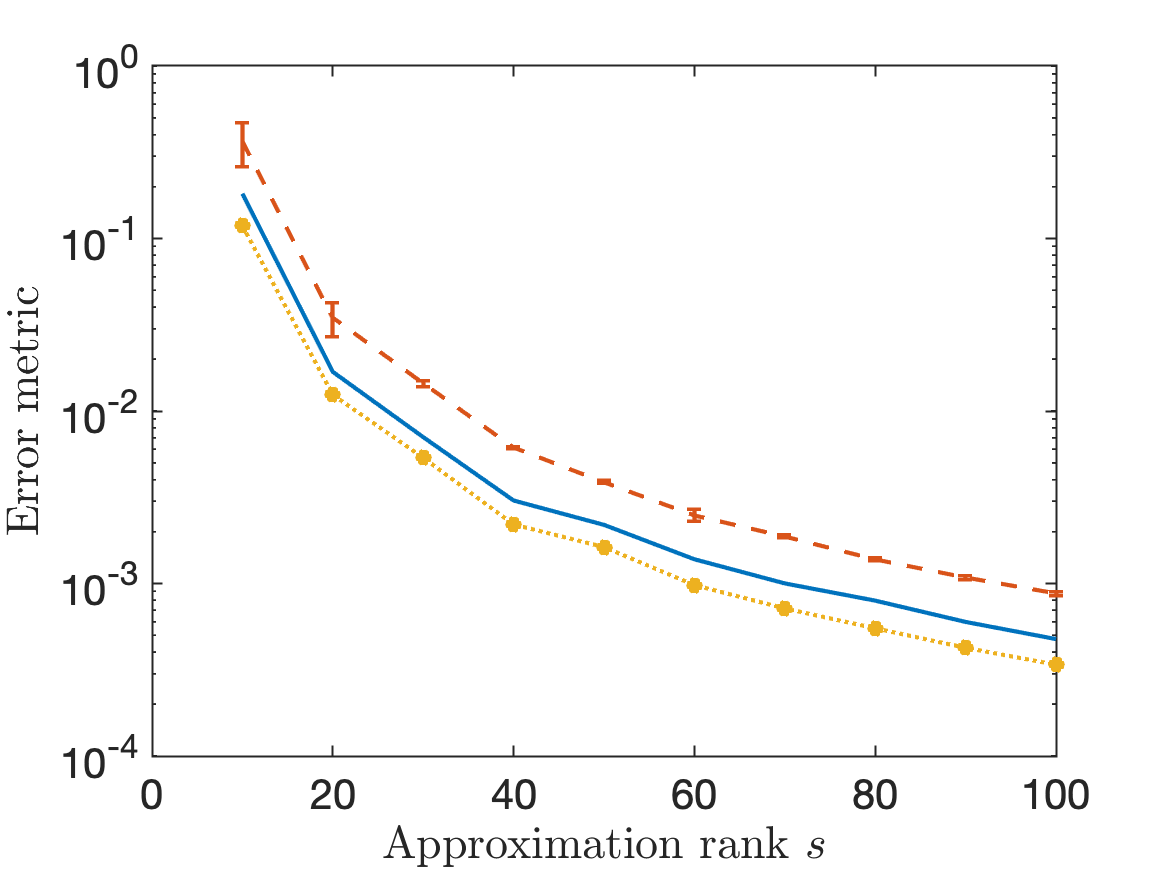}
      \caption*{\ref{eq:poly_decay}$(p \!=\! 1,R\!=\!5)$}
  \end{subfigure}
  \begin{subfigure}{0.32\textwidth}
      \includegraphics[width=0.99\textwidth]{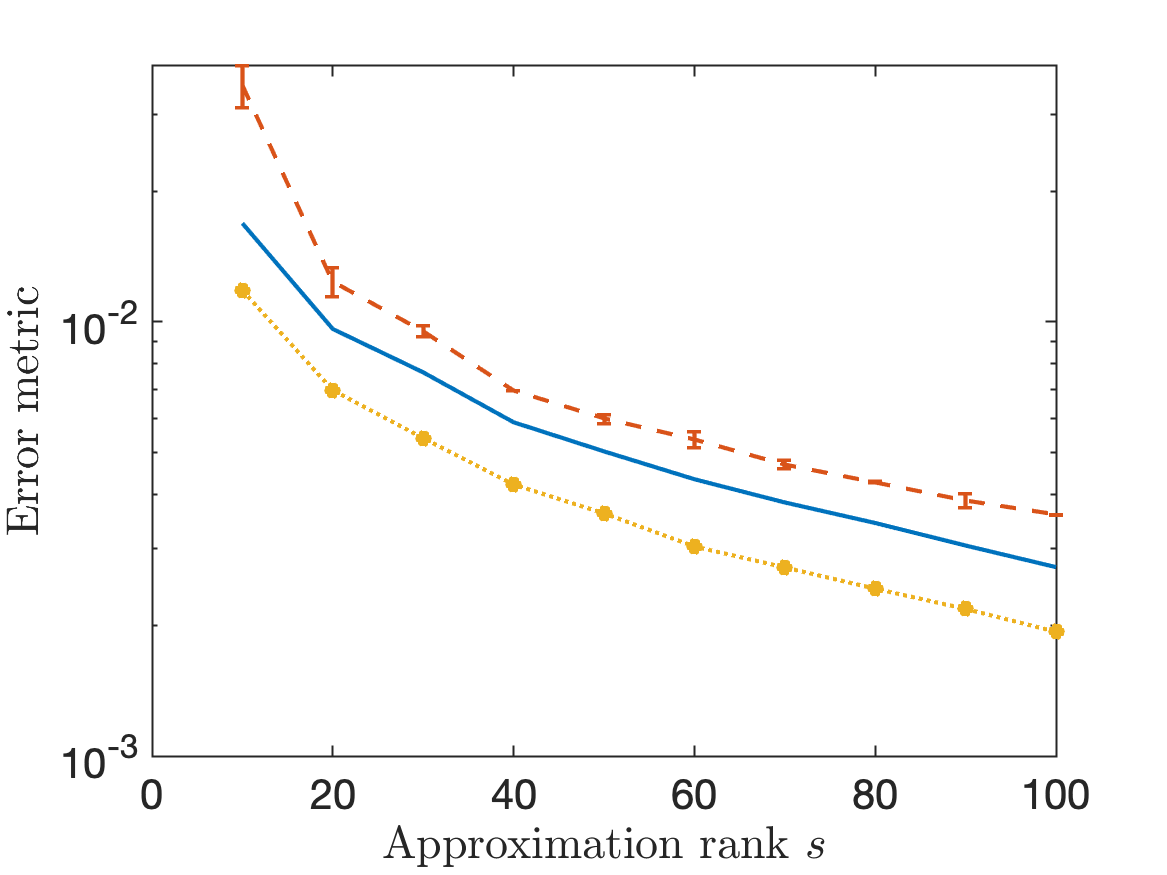}
      \caption*{\ref{eq:low_rank_plus_noise}$(\xi \!=\! 10^{-2},R\!=\!5)$}
  \end{subfigure}
  
  \begin{subfigure}{0.32\textwidth}
      \includegraphics[width=0.99\textwidth]{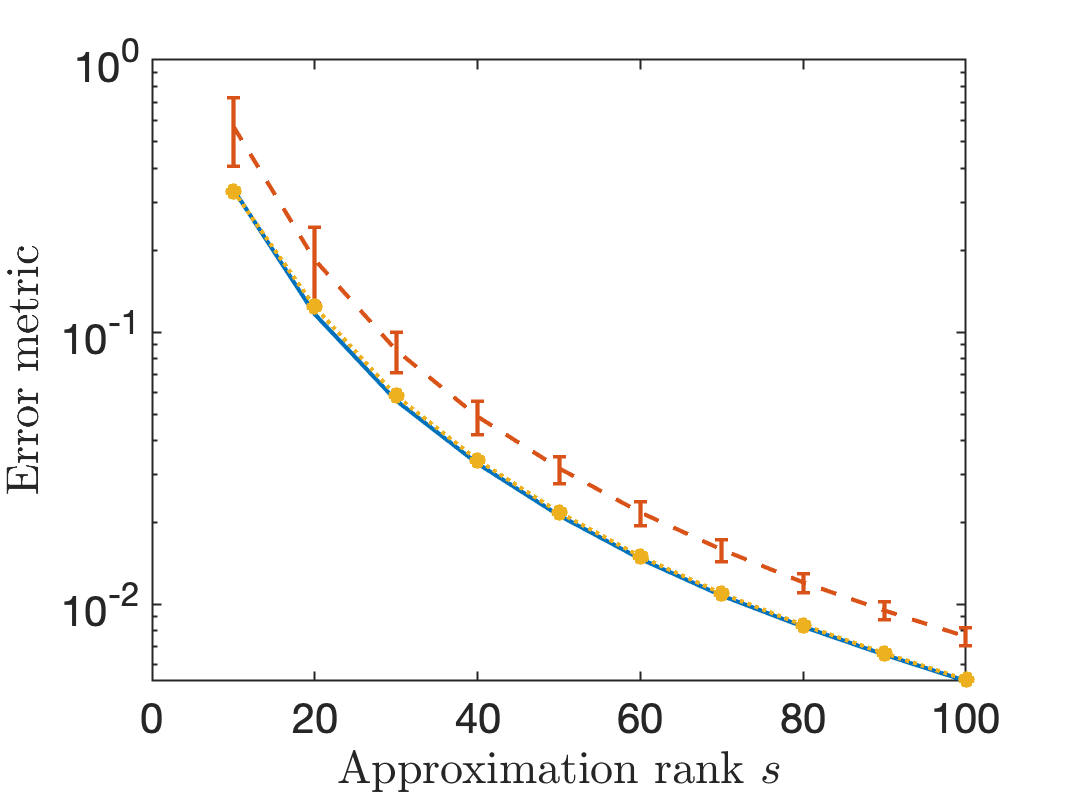}
      \caption*{Inverse-Poisson}
  \end{subfigure}
  \begin{subfigure}{0.32\textwidth}
      \includegraphics[width=0.99\textwidth]{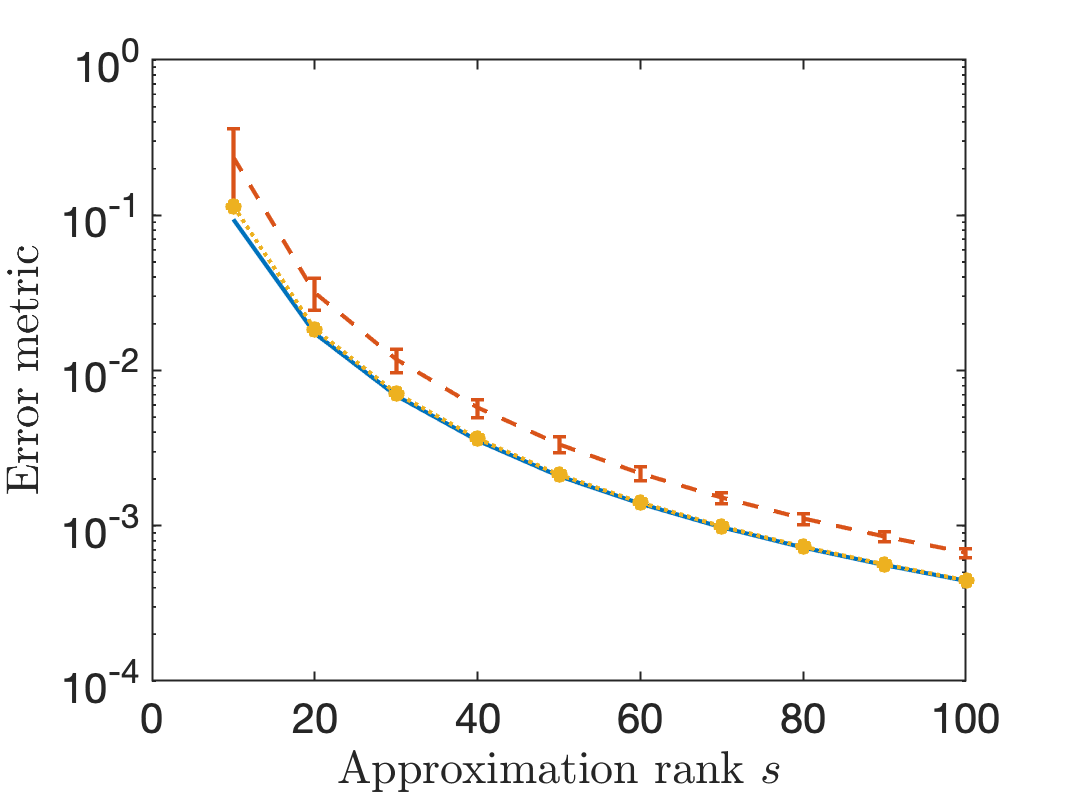}
      \caption*{Network}
  \end{subfigure}
  
  \mycaption{Matrix jackknife for spectral projectors}{Error, standard deviation, and jackknife estimate for randomized Nystr\"om ($q=0$) approximation $\mat{X}$ to the projector $\mat{\Pi}$ onto the span of the five dominant eigenvectors.}
  \label{fig:nystrom_jack}
\end{figure}

\section{MATLAB implementations} \label{sec:matlab}
In this section, we provide MATLAB R2022b implementations of the leave-one-out error estimate and jackknife variance estimate for the Nystr\"om approximation (\cref{sec:nystrom_code}). and randomized SVD (\cref{sec:rsvd_code}).

\subsection{Nystr\"om approximation} \label{sec:nystrom_code}
\Cref{list:nystrom} contains an implementation of Nystr\"om approximation with subspace iteration.
This code computes both the leave-one-out error estimate $\hat{\Err}(\mat{X},\mat{A})$ (outputted as \texttt{err}) and the jackknife estimate $\Jack(\mat{F}(\mat{X}))$ (outputted as \texttt{jack}) of a transformation $\mat{F}(\mat{X})$ of the Nystr\"om approximation. 
We assume the transformation $\mat{F}$ takes the form
\begin{equation*}
    \mat{F}(\mat{X}^{(j)}) = \mat{Q} \cdot \mathtt{transform}(\mat{W}_j,\mat{D}_j) \cdot \mat{Q}^* \quad \text{for every $j=1,\ldots,s$},
\end{equation*}
where $\mat{Q}$ is a fixed unitary (usually $\mat{V}$) independent of $j$ and $(\mat{W}_j,\mat{\Lambda}_j)$ are the eigendecomposition defined in \cref{eq:rank_one_modification}. 
Natural examples include spectral projectors $\mat{X} = \mat{V}\mat{\Lambda}\mat{V}^* \mapsto \mat{V}(:,\set{S})\mat{V}(:,\set{S})^*$ for $\set{S} \subseteq \{1,\ldots,s-1\}$ and truncation to rank $r$, $\mat{X} \mapsto \lowrank{\mat{X}}_r$.
We solve \cref{eq:rank_one_modification} using the LAPACK routine \texttt{dlaed9}, called in MATLAB (\cref{list:update_eig}) via a MEX file (\cref{list:update_eig_mex}).
An implementation of the Nystr\"om-accelerated spectral clustering procedure with jackknife variance estimation (\cref{alg:spectral_clustering}) is also provided in \cref{list:spectral_clustering}.

\lstinputlisting[float=t,frame=single, caption={\texttt{nystrom.m}: Nystr\"om approximation with leave-one-out error estimator and jackknife variance estimate},label={list:nystrom}]{code/nystrom.m}
\lstinputlisting[float=t,frame=single, caption={\texttt{update\_eig.m}: Solve the updated eigenvalue problem \cref{eq:rank_one_modification}},label={list:update_eig}]{code/update_eig.m}
\lstinputlisting[float=t,frame=single, caption={\texttt{update\_eig\_mex.c}: MEX file to solve the updated eigenvalue problem \cref{eq:rank_one_modification}},label={list:update_eig_mex}]{code/update_eig_mex.c}
\lstinputlisting[float=t,frame=single, caption={\texttt{spectral\_clustering.m}: Spectral clustering with jackknife variance estimation, implementing algorithm \cref{alg:spectral_clustering}},label={list:spectral_clustering}]{code/spectral_clustering.m}

\subsection{Randomized SVD} \label{sec:rsvd_code}
\Cref{list:randsvd} contains an implementation of the randomized SVD with subspace iteration.
This code computes both the leave-one-out error estimate $\hat{\Err}(\mat{X},\mat{A})$ (outputted as \texttt{err}) and the jackknife estimate $\Jack(\mat{F}(\mat{X}))$ (outputted as \texttt{jack}) of a transformation $\mat{F}(\mat{X})$ of the randomized SVD approximation. 
We assume the transformation $\mat{F}$ takes the form
\begin{equation*}
    \mat{F}(\mat{X}^{(j)}) = \mat{Q}_1 \cdot \mathtt{transform}(\mat{U}_j,\mat{\Sigma}_j,\mat{V}_j) \cdot \mat{Q}_2^* \quad \text{for every $j=1,\ldots,s$},
\end{equation*}
where $\mat{Q}_1$ and $\mat{Q}_2$ are fixed unitaries independent of $j$ and $\mat{U}_j,\mat{\Sigma}_j,\mat{V}_j$ are defined in \cref{eq:rank_one_rsvd}. 
Natural examples include projectors onto left singular subspaces ($\mat{Q}_1 = \mat{Q}_2 = \mat{U}$), right singular subspaces ($\mat{Q}_1 = \mat{Q}_2 = \mat{V}$), and the truncation of $\mat{X}$ to rank $r < s$ ($\mat{Q}_1 = \mat{U}$, $\mat{Q}_2 = \mat{V}$).
We have not implemented the algorithm of \cite{GE95} to solve \cref{eq:rank_one_rsvd} in $\order(s^2)$ operations.

\lstinputlisting[float=t,frame=single, caption={\texttt{randsvd.m}: Randomized SVD with leave-one-out error estimator and jackknife variance estimate},label={list:randsvd}]{code/randsvd.m}

\section{Derivation of leave-one-out error estimator for randomized SVD} \label{sec:loo-derivation}
In this section, we provide a derivation for the leave-one-out error estimator for the randomized SVD without subspace iteration $q = 0$ given in \cref{alg:rsvd} and for $q > 0$ given in \cref{list:randsvd}.

\subsection{The case $q=0$}
First suppose $q=0$, and denote $\mat{G} = (\mat{R}^*)^{-1}$ with $j$th column $\vec{g}_j$.
By the update formula \cref{eq:rsvd_Q_update}, we have
\begin{equation*}
  \mat{Q}^{(j)} \mleft( \mat{Q}^{(j)} \mright)^* = \mat{Q} \mleft( \Id - \frac{\vec{g}_j^{\vphantom{*}} \vec{g}_j^*}{\norm{\vec{g}_j}^2} \mright)\mat{Q}^*.
\end{equation*}
The leave-one-out error estimator is
\begin{align*}
  \hat{\Err}^2
  &= \frac{1}{s} \sum_{j=1}^s \norm{ \mleft( \Id - \mat{Q}^{(j)} \mleft( \mat{Q}^{(j)} \mright)^*\mright) \mat{A} \vec{\omega}_j }^2.
\end{align*}
Combining the two previous displays gives
\begin{equation} \label{eq:loo-rsvd-all-out}
  \hat{\Err}^2 = \frac{1}{s} \sum_{j=1}^s \norm{ \mleft( \Id - \mat{Q}\mat{Q}^*\mright) \mat{A} \vec{\omega}_j + \frac{\mat{Q} \vec{g}_j (\mat{Q}\vec{g}_j)^*\mat{A}\vec{\omega}_j}{\norm{\vec{g}_j}^2} }^2.
\end{equation}
The matrix $\mat{Q}\mat{Q}^*$ is the orthoprojector onto the column span of $\mat{A}\mat{\Omega}$, so $(\Id - \mat{Q}\mat{Q}^*)\mat{A}\vec{\omega}_j = \vec{0}$.
In addition, the Euclidean norm is unitarily invariant, so the leave-one-out estimator simplies as follows:
\begin{align*}
  \hat{\Err}^2 = \frac{1}{s} \sum_{j=1}^s \norm{\vec{g}_j}^{-2} \cdot |\vec{g}_j^*\mat{Q}^*\mat{A}\vec{\omega}_j^{\vphantom{*}}|^2.
\end{align*}
Since $\mat{A}\mat{\Omega} = \mat{Q}\mat{R}$, $\mat{Q}^*\mat{A}\vec{\omega}_j$ is the $j$th column of $\mat{R}$.
By the identity $\mat{G} = (\mat{R}^*)^{-1} = (\mat{R}^{-1})^*$, $\vec{g}_j^*$ is the $j$th row of $\mat{R}^{-1}$.
Ergo, $\vec{g}_j^*\mat{Q}^*\mat{A}\vec{\omega}_j^{\vphantom{*}} = 1$ and
\begin{equation*}
  \hat{\Err}^2 = \frac{1}{s} \sum_{j=1}^s \norm{\vec{g}_j}^{-2}.
\end{equation*}
This is the formula for the leave-one-out error estimator used in \cref{alg:rsvd}.

\subsection{The case $q > 0$}
Now suppose $q > 0$.
Following the notation used in \cref{list:randsvd}, we let $\mat{Z} = \mat{A}\mat{\Omega}$ and $\mat{Y} = (\mat{A}\mat{A}^*)^q \mat{Z} = \mat{Q}\mat{R}$.
Additionally, let $\mat{T}$ denote $(\mat{R}^*)^{-1}$ with its columns scaled to unit norm.
Denote the $j$th column of $\mat{T}$ and $\mat{Z}$ as $\vec{t}_j$ and $\vec{z}_j$.
Translating \cref{eq:loo-rsvd-all-out} to the present notation, we obtain
\begin{align*}
  \hat{\Err}^2
  &= \frac{1}{s} \sum_{j=1}^s \norm{ \mleft( \Id - \mat{Q}\mat{Q}^*\mright) \vec{z}_j + \mat{Q} \vec{t}_j \vec{t}_j^*(\mat{Q}^*\vec{z}_j) }^2 \\
  &= \frac{1}{s} \norm{\mat{Z} - \mat{Q}(\mat{Q}^*\mat{Z}) + \mat{Q}\mat{T} \cdot \diag \mleft( \vec{t}_j^*(\mat{Q}\vec{z}_j) : j=1,\ldots,s  \mright)}_{\rm F}^2
\end{align*}
In the second line, we package the sum of squared Euclidean norms of vectors into a squared matrix Frobenius norm.
We have obtained the formula that appears in \cref{list:randsvd}.

\end{document}